\newtheorem{theorem}{Theorem}[section]
\newtheorem{proposition}[theorem]{Proposition}
\newtheorem{lemma}[theorem]{Lemma}
\newtheorem*{theorem1.2}{Theorem 1.2}
\newtheorem*{theorem1.3}{Theorem 1.3}
\newcommand{\tmop}[1]{\ensuremath{\operatorname{#1}}}
\numberwithin{equation}{section}
\begin{document}

\title[Zeros of modular forms and QUE]{Small scale distribution of zeros and mass of modular forms}

\author[S. Lester]{Stephen Lester}
\address{School of Mathematical Sciences, Tel Aviv University, Tel Aviv 69978, Israel}
\email{slester@post.tau.ac.il}

\author[K. Matom\"aki]{Kaisa Matom\"aki}
\address{Department of Mathematics and Statistics, University of Turku,
20014 Turku, Finland}
\email{ksmato@utu.fi}

\author[M. Radziwi\l \l]{Maksym Radziwi\l \l}
\address{Centre de Recherches Mathematiques Universite de Montreal P. O Box 6128 \\
Centre-Ville Station Montreal \\ Quebec H3C 3J7 }
\curraddr{Department of Mathematics, Rutgers University \\ Hill Center for the Mathematical Sciences \\
110 Frelinghuysen Rd., Piscataway, NJ 08854-8019 }
\email{radziwil@crm.umontreal.edu}

\thanks{The second author was supported by the Academy of Finland grants no. 137883 and 138522. The research leading to these results has received funding
from the European Research Council under the European Union's
Seventh Framework Programme (FP7/2007-2013) / ERC grant agreement
n$^{\text{o}}$ 320755.}

\begin{abstract}
We study the behavior of zeros and mass of holomorphic Hecke cusp forms on $\tmop{SL}_2(\mathbb Z) \backslash \mathbb H$ at small scales. 
In particular, we examine the distribution of the zeros within hyperbolic balls whose radii shrink sufficiently slowly as $k \rightarrow \infty$. We show that the zeros equidistribute within such balls as $k \rightarrow \infty$ as long as the radii shrink at a rate at most a small power of $1/\log k$.  This relies on a new, effective, proof of Rudnick's theorem on equidistribution of the zeros and on an effective version of Quantum Unique Ergodicity for holomorphic forms, which we obtain in this paper. 

We also examine the distribution of the zeros near the cusp of $\tmop{SL}_2(\mathbb Z) \backslash \mathbb H$.
Ghosh and Sarnak conjectured that almost all the zeros here lie on two vertical geodesics. We show that for almost all forms a positive proportion of zeros high in the cusp do lie on these geodesics. For all forms, we assume the Generalized Lindel\"of Hypothesis and establish a lower bound on the number of zeros that lie on these geodesics, which is significantly stronger
than the previous unconditional results.
\end{abstract}

\maketitle

\section{Introduction}
Let $f$ be a modular form of weight $k$ for $\tmop{SL}_2(\mathbb Z)$. A classical result in
the theory of modular forms states that the  number of properly weighted zeros of $f$ in  $\tmop{SL}_2(\mathbb Z) \backslash \mathbb H$ equals $k/12$. Inside the fundamental domain $\mathcal F=\{ z \in \mathbb H
: -1/2 \le \tmop{Re}(z) < 1/2, |z| \ge 1 \}$ the distribution of the zeros of different modular forms of weight $k$ can vary drastically.
For instance, F.K.C. Rankin and H.P.F. Swinnerton-Dyer \cite{RaS} have proved
that all the zeros of the holomorphic Eisenstein series
\[
E_k(z)=\frac12 \sum_{(c,d)=1} \frac{1}{(cz+d)^k}
\]
that lie inside $\mathcal F$ lie on the arc $\{ |z|=1 \}$. Moreover, the zeros of $E_k(z)$ are
uniformly distributed on this arc as $k \rightarrow \infty$. In contrast, consider powers of the modular discriminant, that is, $\Delta(z)^{\frac{k}{12}}$ with $12 | k$. This function
is a weight $k$ cusp form and has one distinct zero at $\infty$ with multiplicity $k/12$.

The weight $k$ Hecke cusp forms constitute a natural basis for the space of weight $k$ modular forms and 
the distribution of their zeros differs from the previous two examples.
Using methods from potential theory,  Rudnick \cite{R} 
showed that the zeros of Hecke cusp forms equidistribute in the fundamental domain $\mathcal{F}$
with respect to hyperbolic measure in the limit as the weight tends to infinity.
Rudnick's result originally relied on the then unproven Quantum Unique Ergodicity (QUE) conjecture
for holomorphic Hecke cusp forms.
However this is now a theorem proved by Holowinsky 
and Soundararajan \cite{HS} and so Rudnick's result on the equidistribution of zeros holds unconditionally. 

It is natural to study what happens beyond equidistribution, and to investigate
the distribution of zeros and mass of Hecke cusp forms at smaller scales. 
That is, to examine the behavior of the zeros and mass 
within sets whose hyperbolic area tends to zero  at a quantitative rate as the weight $k \rightarrow \infty$.
For the zeros, we consider the following two different aspects of this problem:
\begin{itemize}
\item[1)] The distribution of zeros of Hecke cusp forms within hyperbolic balls $B(z_0,r_k) \subset \mathcal F$
with $r_k \rightarrow 0$ sufficiently slowly as $k \rightarrow \infty$.
\item[2)] The distribution of the zeros of Hecke cusp forms in the domain
\[
\mathcal F_{Y}=\{ z \in \mathcal F : \tmop{Im}(z) >Y \} \qquad Y \ge \sqrt{k \log k}.
\]
\end{itemize}
The second problem also examines the zeros of $f$ at a small scale since the hyperbolic area of $\mathcal F_Y$ equals $1/Y$ and
tends to zero as the weight tends to infinity.
This problem was originally studied by Ghosh and Sarnak \cite{GS} who proved that many of the zeros of $f$ that lie
inside $\mathcal F_{Y}$ lie on each of the vertical geodesics $\tmop{Re}(z)=-1/2$ and $\tmop{Re}(z)=0$.

Additionally, building on the techniques
developed by Holowinsky and Soundararajan we prove an effective form of QUE. Our result
also applies to the small scale setting and we show that the $L^2$-mass of a weight $k$ Hecke cusp 
 equidistributes inside a rectangle whose hyperbolic area shrinks sufficiently slowly as $k \rightarrow \infty$.
This complements recent work of Young \cite{Y} who studied QUE at even smaller scales under the assumption of the Generalized
Lindel\"of Hypothesis. Notably, Young's work also applies to Hecke-Maass forms whereas the analog of our result for Hecke-Maass forms is open.

\subsection{Zeros of Hecke cusp forms in shrinking hyperbolic balls and effective QUE}
Two immediate difficulties appear when attempting
to understand the distribution of zeros of Hecke cusp forms
in shrinking hyperbolic balls: First of all, it is not clear if it is possible to
adapt
Rudnick's argument since it relies on a compactness argument, which is not effective and does not apply
to the small scale setting.
 Secondly, the current results on QUE
do not establish a rate of convergence. We remedy the first difficulty by finding a new
proof of Rudnick's theorem, which is effective.  We address the second difficulty by revisiting
the work of Holowinsky and Soundararajan and extracting a rate of convergence from their result.
This leads to the following theorem.
\begin{theorem} \label{zeros thm1}
Let $f_k$ be a sequence of Hecke cusp forms of weight $k$. Also, let $B(z_0, r) \subset \mathcal{F}$ 
be the hyperbolic ball centered at $z_0$ and of radius $r$, with
$z_0$ fixed and $r \ge (\log k)^{-\delta / 2 + \varepsilon}$ where $\delta = \tfrac{1}{7} \cdot (31/2-4\sqrt{15}) = 0.001152 \ldots$. 
Then as $k \rightarrow \infty$,we have
$$
\frac{\# \{ \varrho_f \in B(z_0, r): f_k(\varrho_f) = 0 \}}{\# \{ \varrho_f \in \mathcal{F}: f_k(\varrho_f) = 0\}} = \frac{3}{\pi} \iint_{B(z_0,r)}
\frac{dx dy}{y^2}  + O \Big ( r (\log k)^{-\delta / 2 + \varepsilon} \Big ).
$$
\end{theorem}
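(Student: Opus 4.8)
The plan is to recast the zero count via potential theory and then feed in the effective form of QUE. Since $y^{k}|f_k(z)|^2$ is $\tmop{SL}_2(\mathbb{Z})$-invariant, set $\Phi_k(z):=y^{k/2}|f_k(z)|/\|f_k\|$, so that $\iint_{\mathcal F}\Phi_k(z)^2\,y^{-2}\,dx\,dy=1$. Using $\Delta\log|f_k|=2\pi\sum_{\varrho}m_\varrho\,\delta_\varrho$ (Euclidean Laplacian; sum over the zeros of $f_k$ with multiplicity) together with $\Delta\log y=-y^{-2}$, Green's formula gives, for every $\psi\in C_c^\infty$ which descends to $\tmop{SL}_2(\mathbb Z)\backslash\mathbb H$ and is supported away from the cusp,
\[
\frac{12}{k}\sum_{\varrho}m_\varrho\,\psi(\varrho)\;=\;\frac{3}{\pi}\iint_{\mathcal F}\psi(z)\,\frac{dx\,dy}{y^2}\;+\;\frac{6}{\pi k}\iint_{\mathcal F}\log\Phi_k(z)\,\Delta\psi(z)\,dx\,dy,
\]
the $\log\|f_k\|$ contribution dropping out because $\iint\Delta\psi=0$, and where we used the valence formula $\sum_{\varrho\in\mathcal F}m_\varrho=k/12$. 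Specialising $\psi$ to a smooth approximation of $\mathbf 1_{B(z_0,r)}$ reduces the theorem to bounding the last integral.

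For the test function I would take $\psi=\psi_{r,\eta}$ with $\mathbf 1_{B(z_0,r-\eta)}\le\psi\le\mathbf 1_{B(z_0,r+\eta)}$ and $\|\Delta\psi\|_\infty\ll_{z_0}\eta^{-2}$, which is legitimate since $z_0$ is fixed and, for bounded $r$, the hyperbolic and Euclidean metrics are comparable on $B(z_0,2r)$. Sandwiching $\mathbf 1_{B(z_0,r)}$ between $\psi_{r-\eta,\eta}$ and $\psi_{r+\eta,\eta}$ bounds the discrepancy between $\sum_\varrho m_\varrho\psi(\varrho)$ and the true count by the number of zeros in a hyperbolic annulus of width $2\eta$, which the identity itself (applied to $B(z_0,r\pm\eta)$) controls by $\ll k r\eta+\eta^{-2}\iint_{B(z_0,2r)}|\log\Phi_k|\,dx\,dy$. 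Splitting $\log\Phi_k=\log^+\Phi_k-\log^-\Phi_k$, the positive part is harmless: $\log^+\Phi_k\le\Phi_k$ and Cauchy--Schwarz against the $L^2$-normalisation give $\iint_{B(z_0,2r)}\log^+\Phi_k\,dx\,dy\ll r$. Choosing $\eta$ at the end to balance $r\eta$ against $\frac{1}{k\eta^2}\iint_{B(z_0,2r)}\log^-\Phi_k$ turns a bound of the shape $\iint_{B(z_0,2r)}\log^-\Phi_k\,dx\,dy\ll rk(\log k)^{-3\delta/2+\varepsilon}$ into the claimed error term.

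Thus everything rests on the lower bound for $\Phi_k$ on the ball, i.e. on estimating $\iint_{B(z_0,2r)}\log^-\Phi_k$; equivalently one must show that $|f_k|$ is nowhere super-exponentially small on $B(z_0,2r)$ and that the zeros of $f_k$ do not over-concentrate there. Here the effective QUE enters. Since the $L^2$-mass of $f_k$ equidistributes in hyperbolic balls whose radius is $\gg(\log k)^{-\delta/2+\varepsilon}$, one has $\iint_{B(z_0,r/2)}\Phi_k^2\,y^{-2}\,dx\,dy\gg\mu_{\tmop{hyp}}(B(z_0,r/2))$, so there is a point $z_0'\in B(z_0,r/2)$ with $\Phi_k(z_0')\gg1$ and $f_k(z_0')\ne0$. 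I would then factor $f_k=g_k\cdot\prod_{\varrho}(z-\varrho)^{m_\varrho}$ over the zeros in a slightly larger ball, chosen (by pigeonhole on the radius) so that no zero lies close to its bounding circle; there $\log|g_k|$ is harmonic. Jensen's formula centred at $z_0'$---where the large quantities $\log\|f_k\|$ and $-\tfrac k2\log y$ cancel against $\log|f_k(z_0')|$---bounds the mean of $\log^-|f_k|$ on that circle, hence, via the Poisson representation, bounds $\log|g_k|$ from below throughout $B(z_0,2r)$; the contribution of the zeros themselves is controlled termwise by $\iint_{|w|\le1}\log\tfrac1{|w|}\,dx\,dy=\pi/2$ times their number, which is bounded a priori by a further application of Jensen's formula centred at the QUE-produced point $z_0'$. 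Propagating the quantitative equidistribution of mass through these estimates yields the required bound for $\iint_{B(z_0,2r)}\log^-\Phi_k$ in the stated range of $r$.

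I expect the main obstacle to be exactly this last step. Pointwise \emph{upper} bounds for $\Phi_k$ (sup-norm bounds) are cheap and give the easy control of $\log^+\Phi_k$; but a quantitative \emph{lower} bound for $|f_k|$, together with the attendant non-clustering of its zeros, at a scale as small as $r$ and uniformly in $k$, is available only through the equidistribution of mass, and it is precisely the strength $(\log k)^{-\delta}$ of the effective QUE that both forces the restriction $r\ge(\log k)^{-\delta/2+\varepsilon}$ and dictates the shape of the error term.
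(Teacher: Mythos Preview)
Your framework is right and matches the paper's: Rudnick's identity reduces the count to controlling $\iint\log\Phi_k\cdot\Delta\psi$, and the substance lies in bounding $\log^-\Phi_k$, for which effective QUE is the only input. The gap is in \emph{where} you invoke QUE. You use it once, to produce a single point $z_0'\in B(z_0,r/2)$ with $\Phi_k(z_0')\gg 1$, and then run Jensen/Poisson over a disk of radius $\asymp r$ centred there. But on such a disk the Landau error is $\log M_r(z_0')\asymp kr$ (this comes from the factor $(y_0'/y)^{k/2}\le e^{Ckr}$, not from anything subtle), and the zero count is likewise $\ll kr$. Your pointwise lower bound is then $\log\Phi_k(z)\ge -Ckr+\sum_\varrho\log|z-\varrho|$, so $\iint_{B(z_0,2r)}\log^-\Phi_k\ll kr\cdot r^2+(kr)\cdot r^2|\log r|\asymp kr^3$. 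Balancing $r\eta$ against $r^3/\eta^2$ gives error $\asymp r^{5/3}$, which does not tend to $0$ for fixed $r$, and even at the threshold $r=(\log k)^{-\delta/2+\varepsilon}$ it swamps the main term $\asymp r^2$. The bound $\iint\log^-\Phi_k\ll rk(\log k)^{-3\delta/2}$ that you assert is not what this argument produces.

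The paper repairs this by applying QUE not at scale $r$ but at the smaller scale $h(k)=(\log k)^{-\delta+\varepsilon}$: in \emph{every} disk $D_{h(k)}(w)$ there is a point with $\Phi_k^2\gg e^{-kh(k)}$ (this is the hypothesis \eqref{hypothesis} of Theorem~\ref{equi prop}). Now the Landau error is $\asymp kh(k)$, independent of $r$. Rather than estimate $\iint\log^-\Phi_k$ directly, the paper shows via Cartan's lemma that the exceptional set $\mathcal T_\delta=\{\Phi_k<e^{-\delta k}\}$, with $\delta\asymp h(k)\log(1/h(k))$, has measure $\ll\eta^2$ for any $\eta>\exp(-c_0\delta/h(k))$; off $\mathcal T_\delta$ one has the pointwise bound $|\log\Phi_k|\le k\delta$, which when integrated against $|\Delta\psi|$ (supported on the thin annulus, $\iint|\Delta\psi|\ll r/\eta$) gives $k\delta\cdot r/\eta$. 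The contribution of $\mathcal T_\delta$ is handled by Cauchy--Schwarz against a crude bound $\iint(\log\Phi_k)^2\ll k^2$. Optimising $\eta\asymp h(k)^{1/2}$ yields the error $r\,h(k)^{1/2}\log(1/h(k))\ll r(\log k)^{-\delta/2+\varepsilon}$. Your Jensen/Poisson idea is in the right spirit, but to make it quantitative you must localise it to scale $h(k)$, not $r$.
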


This result is far from optimal since we expect equidistribution for the zeros of Hecke cusp forms nearly all the way down to the Planck scale. That is, the zeros of Hecke modular forms should equidistribute with respect to hyperbolic measure
within hyperbolic balls with area as small as $k^{-1+\varepsilon}$. Assuming the Generalized Lindel\"of Hypothesis
we can show this happens within
hyperbolic balls with area as small as $k^{-1/4+\varepsilon}$.

\begin{theorem} \label{zeros thmlin}
Assume the Generalized Lindel\"of Hypothesis. Let $f_k$ be a sequence of Hecke cusp forms of weight $k$.  Also, let $B(z_0, r) \subset \mathcal{F}$ be the hyperbolic ball centered at $z_0$ and of radius $r$,
with $z_0$ fixed and $r \ge k^{-1/8+\varepsilon}$. Then as $k \rightarrow \infty$ we have
$$
\frac{\# \{ \varrho_f \in B(z_0, r): f_k(\varrho_f) = 0 \}}{\# \{ \varrho_f \in \mathcal{F}: f_k(\varrho_f) = 0\}} = \frac{3}{\pi} \iint_{B(z_0,r)}
\frac{dx dy}{y^2}  + O \Big ( r k^{-1/8+\varepsilon} \Big ) .
$$
\end{theorem}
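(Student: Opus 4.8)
The plan is to follow the same two-part strategy that must already underlie Theorem~\ref{zeros thm1}, but to feed in the quantitative input coming from the Generalized Lindel\"of Hypothesis in place of the (essentially logarithmic) rate extracted from Holowinsky--Soundararajan. Concretely, one writes the counting function for zeros of $f_k$ in a hyperbolic ball $B(z_0,r)$ via a regularized logarithmic potential: if $F_k(z) = \log\big(y^{k/2}|f_k(z)|\big)$ is the (subharmonic, $\mathrm{SL}_2(\mathbb Z)$-invariant away from zeros) normalized log-modulus, then by the Poincar\'e--Lelong / Stokes formula the number of zeros in a smoothed version of $B(z_0,r)$ is $\tfrac{1}{2\pi}\int \Delta_{\mathrm{hyp}} F_k \cdot \psi$ against a suitable bump $\psi$ adapted to the ball, plus the main term $\tfrac{k}{4\pi}\iint_{B(z_0,r)} y^{-2}\,dx\,dy$ coming from the $-\tfrac{k}{2}\Delta_{\mathrm{hyp}}\log y = k/2 \cdot \tfrac{\text{(const)}}{}$ contribution. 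After dividing by $k/12 = \#\{\varrho_f\in\mathcal F\}$ and integrating by parts twice, the error is controlled by $\iint_{\mathcal F} |F_k(z)|\,|\Delta_{\mathrm{hyp}}^2 \psi|\,d\mu(z)$, i.e. by the $L^1$-mass of $F_k$ against a test function supported near $B(z_0,r)$ and of size $O(r^{-O(1)})$ in its second hyperbolic Laplacian.

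The second step is to bound $\int |F_k|$ locally. Here $F_k$ is the logarithm of the $L^2$-mass density $\mu_k := y^k|f_k(z)|^2/\|f_k\|_2^2$ (up to normalization), so controlling $\int |F_k|$ over a small region splits into (a) an upper bound $\int_{B} \mu_k \ll \mathrm{vol}(B) + (\text{error})$ — this is exactly effective QUE, and on GLH one gets a power-saving error of size $k^{-1/4+\varepsilon}$ for the mass in a fixed ball, hence $k^{-1/4+\varepsilon}$-type control even for balls of radius down to $r\gg k^{-1/8+\varepsilon}$ after accounting for the $r^{-O(1)}$ loss from the test function; and (b) a lower bound, i.e. an upper bound on $\int_B \max(0,-\log\mu_k)$, which is the delicate part because $\mu_k$ can be very small (indeed it vanishes at the zeros). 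One handles (b) by a Jensen-type argument: $-\log$ is integrable against subharmonic minorants, and one uses the mean-value/sub-mean-value property of $\log|f_k|$ on hyperbolic balls of radius $\asymp r$ together with the pointwise upper bound $\mu_k(z)\ll k^{1+\varepsilon}$ (which follows from the Fourier expansion and Deligne's bound, and holds uniformly on $\mathcal F$) to show $\int_B |\log \mu_k| \ll \mathrm{vol}(B)\cdot(\log k) + (\text{QUE error})$. Combining (a) and (b) and keeping careful track of the powers of $r$, the total error in the zero count is $O(r\cdot k^{-1/8+\varepsilon})$, as claimed.

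A few technical points require attention. First, the effective QUE input needs to be uniform enough: on GLH the shifted convolution / moment estimates give equidistribution of $\mu_k$ against smooth test functions with error $k^{-1/4+\varepsilon}\cdot\|\text{test}\|$ in an appropriate Sobolev norm, and since we apply it to bumps concentrated at scale $r$ with Sobolev norm $\asymp r^{-A}$, the balance $r^{-A}k^{-1/4+\varepsilon}\ll r k^{-1/8+\varepsilon}$ forces exactly the constraint $r\ge k^{-1/8+\varepsilon}$ (with $A$ absorbed); one must verify $A$ is indeed small enough that the arithmetic works, which is a routine but essential bookkeeping check. Second, one must ensure $B(z_0,r)$ stays away from the corners/elliptic points and from the cusp well enough that the invariant smoothing is legitimate — this is automatic since $z_0$ is fixed and $r\to 0$. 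Third, the lower-bound argument (b) is the genuine obstacle: ruling out that $\mu_k$ is abnormally small on a set of non-negligible measure inside $B$ requires the subharmonicity of $\log|f_k|$ plus a no-small-$L^2$-mass statement, and making the constants explicit in terms of $r$ is where most of the care goes. Everything else — the potential-theoretic setup, the two integrations by parts, the passage from smoothed to sharp balls — is parallel to the unconditional Theorem~\ref{zeros thm1} and only the numerology of the exponents changes.
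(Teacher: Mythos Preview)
Your overall strategy matches the paper's: the proof of Theorem~\ref{zeros thmlin} in the paper applies Theorem~\ref{equi prop} (the effective version of Rudnick's formula) with $h(k)=k^{-1/4+\varepsilon}$, the hypothesis~\eqref{hypothesis} being supplied under GLH by Young's small-scale QUE argument, and then balances the smoothing parameter as $M=h(k)^{-1/2}$, which is exactly your ``balance $r^{-A}k^{-1/4+\varepsilon}\ll rk^{-1/8+\varepsilon}$'' computation. So the architecture, the GLH input, and the numerology are all correct.

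Where your sketch is too loose is step~(b). You claim one can show $\int_B |\log\mu_k|\ll \mathrm{vol}(B)\cdot(\log k)$ via the sub-mean-value property and the pointwise sup bound. This bound is not what the paper proves and is likely false as stated: there are $\asymp k\cdot\mathrm{vol}(B)$ zeros in $B$, and the logarithmic singularities they produce prevent such a small $L^1$ bound. The paper does \emph{not} bound $\int_B|\log\mu_k|$ directly. Instead it proves only the crude $L^2$ estimate $\int_{\mathcal D}(\log(y^{k/2}|f|))^2\ll k^2$ (Lemma~\ref{sq bd}), and couples it with an exceptional-set argument: Cartan's lemma on the locations of zeros together with Landau's form of Jensen's formula (Lemma~\ref{lem:landau}) shows that the set $\mathcal T_\delta$ where $y^{k/2}|f|<e^{-\delta k}$ has measure $\ll\eta^2$ as soon as QUE supplies a single point of mass $\gg e^{-\varepsilon k}$ in each $\varepsilon$-ball (Lemma~\ref{exceptional} and \eqref{meas small}). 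One then splits $\int\log(y^{k/2}|f|)\,\Delta\phi$ into the piece off $\mathcal T_\delta$ (trivially $\ll k\delta\int|\Delta\phi|$) and the piece on $\mathcal T_\delta$ (bounded by Cauchy--Schwarz using the $L^2$ bound and the small measure). Your phrase ``ruling out that $\mu_k$ is abnormally small on a set of non-negligible measure'' is exactly this idea, but the mechanism you propose (sub-mean-value plus sup-norm) does not by itself deliver it; the Cartan/Landau machinery is what makes this step go through, and you should invoke it explicitly rather than a generic ``Jensen-type argument''.
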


While QUE establishes that the mass of $y^k |f(z)|^2$ equidistributes as the weight $k$
of $f$ grows, our proof of Theorem 2.1
shows that the equidistribution of the zeros follows from the much weaker
condition: For any 
fixed $\varepsilon > 0$ and for any fixed domain
$\mathcal{R}$, we have
$$
\iint_{\mathcal{R}} y^k \cdot |f(z)|^2 \cdot \frac{dx dy}{y^2} \gg e^{-\varepsilon k}.
$$
We were not able to make use of this weaker condition, but remain hopeful that it will be useful in later works
(see Theorem 2.1 for precise results). 

To understand the mass of $f$ in shrinking sets we obtain 
the following effective version of Quantum Unique Ergodicity in the holomorphic case. 
\begin{theorem}[Effective QUE] \label{effective que}
Let $f$ be a Hecke cusp form of weight $k$. Then,
$$
\sup_{\mathcal{R} \subset \mathcal{F}} \Bigg | 
\iint_{\mathcal{R}} y^k |f(z)|^2 \ \frac{dx dy}{y^2} - \frac{3}{\pi}
\iint_{\mathcal{R}} \frac{dx dy}{y^2} \Bigg | \ll_{\varepsilon}
(\log k)^{-\delta + \varepsilon}
$$
with $\delta = \tfrac{1}{7} \cdot (31/2-4\sqrt{15}) = 0.001152 \ldots$  and 
where the supremum is taken over all the rectangles $\mathcal{R}$ lying inside
the fundamental domain $\mathcal{F}$
that have sides parallel to the coordinate axes. 
\end{theorem}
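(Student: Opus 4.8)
The plan is to bound the discrepancy by expanding the indicator of a rectangle $\mathcal{R}=[a,b]\times[c,d]$ (in the coordinates $z=x+iy$) into horocyclic Fourier modes in $x$, and then to run the Holowinsky--Soundararajan mechanism in a quantitative form. Two ranges of $\mathcal{R}$ can be disposed of at once using only Rankin--Selberg: if $d-c\le(\log k)^{-\delta}$ then, since $\int_0^1 y^k|f(x+iy)|^2\,dx\ll 1$ uniformly for $y$ in the bulk, both $\iint_{\mathcal{R}}y^k|f|^2\frac{dx\,dy}{y^2}$ and $\tfrac3\pi\iint_{\mathcal{R}}\frac{dx\,dy}{y^2}$ are $O(d-c)$; and the portion of $\mathcal{R}$ with $y>\sqrt{k}$ contributes $O(k^{-1/2})$ to both, since $\iint_{\mathcal{F}\cap\{y>\sqrt{k}\}}y^k|f|^2\frac{dx\,dy}{y^2}\ll k^{-1/2}$ by Rankin--Selberg, matching the hyperbolic area. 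For the remaining rectangles I would replace $\mathbf{1}_{[a,b]}(x)$ above and below by the Fej\'er majorant and minorant: these are $1$-periodic, band-limited to frequencies $|n|\le N$ with $N\asymp\log k$, have Fourier coefficients $\ll 1/|n|$, and sandwich $\mathbf{1}_{[a,b]}$ with $L^1$-error $O((\log\log k)/\log k)$; since $y^k|f|^2\ge 0$, integrating in $y$ over $[c,d]$ then gives two-sided bounds. Unfolding against incomplete Poincar\'e series $P_n(z\mid\phi)$ (and using the spectral decomposition of $L^2(\mathrm{SL}_2(\mathbb{Z})\backslash\mathbb{H})$ on the bounded part of $\mathcal{F}$) expresses
\[
\iint_{\mathcal{R}}y^k|f(z)|^2\,\frac{dx\,dy}{y^2}
=\tfrac3\pi\iint_{\mathcal{R}}\frac{dx\,dy}{y^2}
+\mathcal{E}_0
+\sum_{0<|n|\le N}c_n\int_c^d\mathcal{A}_n(y)\,\frac{dy}{y^2}
+O\!\left(\frac{\log\log k}{\log k}\right),
\]
where $\mathcal{A}_n(y)=\int_0^1 y^k|f(x+iy)|^2 e(-nx)\,dx$ is a weighted shifted-convolution sum in the Hecke eigenvalues of $f$, $|c_n|\ll 1/|n|$, and $\mathcal{E}_0$ is the $n=0$ term.

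The term $\mathcal{E}_0=\int_c^d\big(\int_0^1 y^k|f(x+iy)|^2\,dx-\tfrac3\pi\big)\frac{dy}{y^2}$ is controlled by Rankin--Selberg: $\int_0^1 y^k|f(x+iy)|^2\,dx$ has Mellin transform proportional to $\frac{\Gamma(k+s-1)}{(4\pi)^{k+s-1}}\cdot\frac{\zeta(s)L(s,\mathrm{sym}^2 f)}{\zeta(2s)}$, whose residue at $s=1$ produces exactly $\tfrac3\pi$ after normalising $\iint_{\mathcal{F}}y^k|f|^2\frac{dx\,dy}{y^2}=1$ (this is where $L(1,\mathrm{sym}^2 f)$ and the Petersson norm cancel). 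Shifting the contour past $\mathrm{Re}(s)=1$ using the classical zero-free region for $\zeta(s)L(s,\mathrm{sym}^2 f)$, the Hoffstein--Lockhart bound $L(1,\mathrm{sym}^2 f)\gg(\log k)^{-1}$, and the absence of a Siegel zero for $\mathrm{sym}^2 f$, one gets $\mathcal{E}_0\ll_\varepsilon(\log k)^{-1+\varepsilon}$, so the $n=0$ term is not the bottleneck.

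The heart of the matter is the off-diagonal sum. For each fixed $n\ne 0$, $\int_c^d\mathcal{A}_n(y)\frac{dy}{y^2}$ is a weighted shifted-convolution sum $\sum_m\lambda_f(m+n)\overline{\lambda_f(m)}\,W_{k,n}(m)$ concentrated at $m\asymp k/y$, and I would bound it in two complementary ways --- keeping the dependence on $k$, on $n$, and on the arithmetic type of $f$ explicit --- and then take whichever is stronger. Holowinsky's elementary method bounds it through the sum of absolute values, via Henriot--Nair--Tenenbaum-type estimates for multiplicative functions at shifted arguments; the resulting exponent improves as $\sum_{p\le k}|\lambda_f(p)|/p$ decreases. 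Alternatively, Watson's triple-product formula identifies the spectrally decomposed pieces of $\int\phi(y)e(nx)y^k|f|^2\frac{dx\,dy}{y^2}$ with $L(\tfrac12,\mathrm{sym}^2 f\times g)\,L(\tfrac12,g)$ over Hecke--Maass forms $g$ and Eisenstein series, and combined with an effective form of Soundararajan's weak subconvexity bound for $L(\tfrac12,\mathrm{sym}^2 f\times g)$ and with Hoffstein--Lockhart, the resulting exponent improves as $L(1,\mathrm{sym}^2 f)$ grows, i.e.\ (via $\log L(1,\mathrm{sym}^2 f)=\sum_{p\le k}\frac{\lambda_f(p)^2-1}{p}+O(1)$) as $\sum_{p\le k}\lambda_f(p)^2/p$ grows. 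Setting $x:=\frac1{\log\log k}\sum_{p\le k}\frac{|\lambda_f(p)|}{p}$ and $s:=\frac1{\log\log k}\sum_{p\le k}\frac{\lambda_f(p)^2}{p}$, so $0\le x\le 2$ and $s\le 4$ by Deligne while $s\ge x^2$ by Cauchy--Schwarz, one checks that both exponents are decreasing in $s$ (so the worst case is along $s=x^2$) and that along $s=x^2$ the Holowinsky bound weakens and the Soundararajan bound strengthens as $x$ grows through the relevant range; the worst $f$ therefore sits at the value of $x$ where the two exponents coincide. Solving that equation --- a quadratic, because of the $x^2$ forced by the Cauchy--Schwarz constraint --- yields the crossover and hence $\delta=\tfrac17(31/2-4\sqrt{15})$; since the $n$-dependence of the per-mode bound is only $n^{\varepsilon}$, the choice $N\asymp\log k$ costs nothing and the global exponent equals this per-mode one up to $\varepsilon$.

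The main obstacle is making both inputs effective and, crucially, uniform in all the parameters that must be pushed to their limits simultaneously: uniform in $k$, uniform in the shift $n$ (equivalently in the conductor of the auxiliary Maass form $g$, which detecting $\mathcal{R}$ forces to grow), and uniform in the type $(x,s)$ of $f$ --- so that the two bounds can be compared on the nose and the optimisation carried out. In particular one needs Soundararajan's weak subconvexity with explicit dependence on the archimedean conductor of $\mathrm{sym}^2 f\times g$, and that conductor must be kept $\le k^2(\log k)^{O(1)}$; this is exactly why the thin rectangles with $d-c<(\log k)^{-\delta}$ were removed at the start, for then the relevant $g$ have spectral parameter $\ll(\log k)^{\delta}$ and the conductor loss is absorbed into $\varepsilon$. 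The remaining issues --- ruling out a Siegel zero for $L(s,\mathrm{sym}^2 f)$, and treating rectangles high in the cusp or of nearly full width, where the naive unfolding sees $\Gamma$-translates --- are routine, and what is then left is the two-parameter optimisation and the bookkeeping of $\varepsilon$'s.
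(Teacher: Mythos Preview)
Your overall plan --- combine Holowinsky's shifted-convolution bound with Soundararajan's spectral/Watson bound and take the better of the two --- is exactly the paper's strategy. But the bookkeeping that produces the specific exponent $\delta=\tfrac{1}{7}(31/2-4\sqrt{15})$ is not right in your sketch, and the gap is substantive.

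The crucial error is the assertion that ``the $n$-dependence of the per-mode bound is only $n^{\varepsilon}$, so the choice $N\asymp\log k$ costs nothing.'' This is false on both sides. On the spectral side, Watson's formula combined with convexity for $L(\tfrac12,u_j)$ and even with weak subconvexity for $L(\tfrac12,u_j\times\mathrm{sym}^2 f)$ leaves a factor $|t_j|^{1/2+\varepsilon}$ in the bound for $\langle |F_k|^2,u_j\rangle$ (this is Lemma~4.1 in the paper); summing over $|t_j|\le T$ via Weyl's law costs a further $T$, for a total loss $T^{3/2}$. On Holowinsky's side the analogous loss is a factor $(CM)^2$, where $C$ is the cusp truncation height and $M^{-1}$ the smoothing scale. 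Likewise, the cusp cannot be cut at $y>\sqrt{k}$: Soundararajan's mass estimate (which is the relevant unconditional input) only gives $\iint_{y>T}y^k|f|^2\,\tfrac{dx\,dy}{y^2}\ll T^{-1/2+\varepsilon}$, so truncating at $T=(\log k)^{\eta_1}$ costs $(\log k)^{-\eta_1/2}$. And the thin-rectangle step is also not free: the claimed uniform bound $\int_0^1 y^k|f(x+iy)|^2\,dx\ll 1$ is itself a short-interval Rankin--Selberg statement that is not available unconditionally (Deligne plus Hoffstein--Lockhart gives at best a power of $\log k$).

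In the paper's argument the Euler-product optimisation alone yields the raw exponent $\kappa=31/2-4\sqrt{15}$; the factor $1/7$ then appears precisely from balancing $\kappa$ against the polynomial losses above: one must choose the smoothing scale $\delta=(\log k)^{-\eta_2}$ and the cusp height $(\log k)^{\eta_1}$ so that $(\log k)^{-\eta_1/2}$, $(\log k)^{-\eta_2}$, and $(\log k)^{2\eta_1+2\eta_2-\kappa}$ match, which forces $\eta_2=\eta_1/2$, $\eta_1=\tfrac{2}{7}\kappa$, and final exponent $\kappa/7$. Your sketch places the $1/7$ inside the Euler-product crossover itself, which is not where it lives; once you track the genuine polynomial dependence on the spectral cutoff and the $T^{-1/2}$ cusp loss, you will recover the same balancing and the same $\delta$.
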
 
For general domains $\mathcal{R}$
we cannot extract from the argument of Holowinsky and Soundararajan a
saving exceeding a small power of $\log k$. However, assuming the Generalized
Lindel\"of Hypothesis, 
Watson \cite{W} 
and Young \cite{Y} have established a power saving bound, which is an important
ingredient in the proof of Theorem \ref{zeros thmlin}.
 On the unconditional front, it was proven by 
 Luo and Sarnak \cite{LS1,LS2} that one can obtain comparable results
on average, obtaining a power saving bound for most forms $f$. Combining this input with our new proof of Rudnick's theorem gives the following variant of Theorem \ref{zeros thm1}. 
\begin{theorem} \label{zeros thm2}
Let $\mathcal{H}_k$ be a Hecke basis for the set of weight $k$ cusp forms. Let $ \delta > 0$. 
Then, for all but at most $\ll k^{20/21 + 4\delta}$ forms $f \in \mathcal H_k$, we have for $r \ge k^{-\delta/2}$
$$
\frac{\#\{\varrho_f \in B(z_0, r): f_k(\varrho_f) = 0\}}{\#\{\varrho_f \in \mathcal{F}: f_k(\varrho_f) = 0\}}
= \frac{3}{\pi} \iint_{B(z_0,r)} \frac{dx dy}{y^2} + O\Big ( r k^{-\delta/2} \log k \Big ).
$$
\end{theorem}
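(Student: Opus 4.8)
The plan is to combine the effective, robust form of Rudnick's theorem proved in this paper (Theorem 2.1) with the quantitative mass equidistribution on average of Luo and Sarnak. Write $\mu_f(\mathcal R)=\iint_{\mathcal R}y^k|f(z)|^2\,\tfrac{dx\,dy}{y^2}$ and $\mu(\mathcal R)=\tfrac{3}{\pi}\iint_{\mathcal R}\tfrac{dx\,dy}{y^2}$. Recall that our proof of Theorem \ref{zeros thm1} does not use QUE for $f$ in its full strength: it shows that if a Hecke cusp form $f$ satisfies
$$
\sup_{\mathcal R\subset\mathcal F}\bigl|\mu_f(\mathcal R)-\mu(\mathcal R)\bigr|\le\eta,
$$
the supremum being over rectangles with sides parallel to the coordinate axes, then for every $r\ge\sqrt{\eta}$ one has
$$
\frac{\#\{\varrho_f\in B(z_0,r):f(\varrho_f)=0\}}{\#\{\varrho_f\in\mathcal F:f(\varrho_f)=0\}}=\mu(B(z_0,r))+O\bigl(r\sqrt{\eta}\,\log k\bigr).
$$
This is precisely the mechanism that, combined with Theorem \ref{effective que} (where $\eta\ll(\log k)^{-\delta+\varepsilon}$), yields Theorem \ref{zeros thm1}, and that, combined with Watson's formula under the Generalized Lindel\"of Hypothesis (where $\eta\ll k^{-1/4+\varepsilon}$), yields Theorem \ref{zeros thmlin}. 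Thus it suffices to prove that all but $\ll k^{20/21+4\delta}$ of the forms $f\in\mathcal H_k$ satisfy the above rectangle bound with $\eta=k^{-\delta}$; feeding this into Theorem 2.1 with $r\ge k^{-\delta/2}=\sqrt{\eta}$ then gives the claimed asymptotic with error $O(rk^{-\delta/2}\log k)$.

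The input for the almost-all statement is the mean-square estimate of Luo and Sarnak \cite{LS1,LS2}. For a fixed incomplete Eisenstein series or a fixed Hecke--Maass cusp form $\psi$, Watson's formula expresses $\iint_{\mathcal F}\bigl(y^k|f(z)|^2-\tfrac{3}{\pi}\bigr)\psi(z)\,\tfrac{dx\,dy}{y^2}$ in terms of central values of the associated symmetric-square and triple-product $L$-functions; summing the square of this quantity over $f\in\mathcal H_k$ by the Petersson formula and an approximate functional equation produces a bound of the shape
$$
\sum_{f\in\mathcal H_k}\biggl|\iint_{\mathcal F}\Bigl(y^k|f(z)|^2-\tfrac{3}{\pi}\Bigr)\psi(z)\,\frac{dx\,dy}{y^2}\biggr|^2\ll_{\psi,\varepsilon}k^{20/21+\varepsilon}.
$$
By Chebyshev's inequality, for a fixed such $\psi$ the number of $f\in\mathcal H_k$ whose discrepancy against $\psi$ exceeds $k^{-\delta}$ in absolute value is $\ll k^{20/21+2\delta+\varepsilon}$.

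It remains to pass from a single test function to uniformity over all rectangles $\mathcal R\subset\mathcal F$. One discretizes: fix a mesh of rectangles whose corners lie on a grid adapted to the hyperbolic metric of spacing $\asymp k^{-\delta}$, chosen so that there are at most $k^{O(\delta)}$ of them (rectangles lying very high in the cusp carry negligible $\mu$- and $\mu_f$-mass and may be ignored) and so that the hyperbolic measure of the region between two nested mesh rectangles is $\ll k^{-\delta}$. Applying the Luo--Sarnak bound to suitable smooth majorants and minorants of each mesh rectangle, together with Chebyshev and a union bound, deletes at most $\ll k^{20/21+4\delta}$ forms and leaves, for each surviving $f$, the estimate $|\mu_f(Q)-\mu(Q)|\le k^{-\delta}$ at every mesh rectangle $Q$. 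For an arbitrary $\mathcal R\subset\mathcal F$ one then picks mesh rectangles $Q_-\subseteq\mathcal R\subseteq Q_+$ and uses the positivity of $\mu_f$ (equivalently, monotonicity of $\mu_f$ and of the hyperbolic measure under inclusion), together with the bounds at $Q_\pm$, to conclude $|\mu_f(\mathcal R)-\mu(\mathcal R)|\ll k^{-\delta}$ for every such $\mathcal R$. Feeding $\eta\ll k^{-\delta}$ into Theorem 2.1 for each of the $\#\mathcal H_k-O(k^{20/21+4\delta})$ surviving forms then completes the argument. The only genuinely delicate point is the bookkeeping: one must balance the Chebyshev threshold, the mesh spacing, and the quality of the smooth majorants and minorants so that the smoothing error, the discretization error, and the size of the exceptional set all come out at the claimed powers of $k$; there is no new analytic difficulty beyond the Luo--Sarnak estimate and Theorem 2.1.
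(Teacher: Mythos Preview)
Your approach is correct in spirit but takes a noticeably more roundabout route than the paper's. The paper quotes directly the Luo--Sarnak result in the form of equation (2.6),
\[
\frac{1}{\#\mathcal H_k}\sum_{f\in\mathcal H_k}\sup_{B}|\mu_f(B)-\nu(B)|^2\ll k^{-1/21},
\]
where the supremum is already taken over \emph{all} geodesic balls $B\subset\mathcal F$. With this in hand, the proof is three lines: one defines the exceptional set
\[
\mathcal E_k(r_1)=\{f\in\mathcal H_k:\exists\,z_0\text{ with }y^k|f(z)|^2\le k^{-2}\text{ for all }z\in B(z_0,r_1)\},
\]
notes that $f\notin\mathcal E_k(r_1)$ is precisely the pointwise hypothesis of Theorem 2.1 with $h(k)=r_1$, and observes that for $f\in\mathcal E_k(r_1)$ the bad ball satisfies $|\mu_f(B)-\nu(B)|\gg r_1^2$, so $r_1^4\cdot\#\mathcal E_k(r_1)\ll k^{20/21}$.

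Your argument instead starts from the per--test-function estimate, rebuilds uniformity over rectangles by discretization plus Chebyshev plus a union bound, and only then feeds into Theorem 2.1. This is legitimate, but it is re-proving by hand the content that Luo--Sarnak already package into (2.6), and it costs you careful bookkeeping (tracking the $\psi$-dependence in the Luo--Sarnak bound, the number $\asymp k^{4\delta}$ of mesh rectangles, the smoothing scale, and the cusp truncation). The paper's route avoids all of this: since (2.6) carries the supremum inside the $\ell^2$-average, a single application of Chebyshev suffices and no discretization is needed. A second, more subtle point is that the paper links the exceptional set to the \emph{pointwise} hypothesis of Theorem 2.1 directly, rather than passing through a uniform rectangle-discrepancy bound; this is what produces exactly the exponent $4\delta$ in the exceptional count, whereas your discretization naturally produces $k^{20/21+c\delta}$ with a constant $c$ that needs work to pin down.
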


\subsection{Zeros of Hecke cusp forms in shrinking Siegel domains}
We also consider the distribution of 
the zeros of Hecke cusp forms within the set $\mathcal F_Y=\{z \in \mathcal F : \tmop{Im}(z)>Y \}$
with $Y > \sqrt{k \log k}$. The hyperbolic area of $\mathcal F_Y$ equals $\frac{1}{Y}$,
and Ghosh and Sarnak \cite{GS} proved 
for a weight $k$ Hecke cusp form, $f_k$, that
$\frac{k}{Y}
\ll \# \{ \varrho_f \in \mathcal F_Y \} \ll \frac{k}{Y}$.
They also observed that equidistribution should not happen here
and conjectured that almost all the zeros of $f_k$ in $\mathcal F_Y$ lie on the
vertical geodesics $\tmop{Re}(z)=-1/2$ and $\tmop{Re}(z)=0$
with one half lying on each line.

In support of their conjecture Ghosh and Sarnak showed that many of the zeros of 
$f_k$ in $\mathcal F_Y$ lie on
segments of the vertical lines $\tmop{Re}(z)=0$ and $\tmop{Re}(z)=-1/2$. They proved that
\begin{equation}
\label{eq:GS}
\# \{ \varrho_f \in \mathcal F_Y :  \tmop{Re}(\varrho_f)=0 \mbox{ or } \tmop{Re}(\varrho_f)=-1/2 \} \gg (k/Y)^{\frac{1}{2}-\frac{1}{40}-\epsilon}.
\end{equation}
The term $1/40$ in their result was subsequently removed in \cite{Ma} by the second named author.

In support of Ghosh and Sarnak's conjecture, we establish the following result. 
\begin{theorem}
\label{thm:realzeravg}
Let $\varepsilon > 0$ be fixed. There exists a subset $\mathcal{S}_k \subset \mathcal{H}_k$, containing more than $(1 - \varepsilon) |\mathcal{H}_k|$ elements, and such that every $f \in \mathcal{S}_k$ we have
\[
\# \{ \varrho_f \in \mathcal F_Y :  \tmop{Re}(\varrho_f)=0 \}  \geq c(\varepsilon)\cdot \# \{ \varrho_f \in \mathcal F_Y \} 
\]
and
\[
\# \{ \varrho_f \in \mathcal F_Y :  \tmop{Re}(\varrho_f)=-1/2 \}  \geq c(\varepsilon) \cdot \# \{ \varrho_f \in \mathcal F_Y\}
\]
provided that $\delta(\varepsilon) k > Y > \sqrt{k \log k}$ and $k \rightarrow \infty$. The constants $\delta(\epsilon)$ and $c(\varepsilon)$ depend only on $\varepsilon$.
\end{theorem}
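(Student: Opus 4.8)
\emph{Step 1: reduction to sign statistics of the Hecke eigenvalues.}
Write $f(z)=\sum_{n\ge1}\lambda_f(n)n^{(k-1)/2}e(nz)$; on each of the two geodesics $f$ is real valued, so the zeros there are essentially the sign changes of $y\mapsto f(iy)$, resp.\ $y\mapsto f(-\tfrac12+iy)$, on $(Y,\infty)$. The $n$-th term of $f(iy)$ has modulus $\lambda_f(n)n^{(k-1)/2}e^{-2\pi ny}$, and as a function of $n$ this is a Gaussian-type bump centred at $n_0=(k-1)/(4\pi y)$ of width $\asymp n_0/\sqrt k$; since $y>Y>\sqrt{k\log k}$ this width is $o(1)$, so exactly one term, indexed by the integer $m$ nearest to $n_0$, dominates. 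As $y$ runs over $(Y,\infty)$ the index $m$ runs over $1,\dots,M:=\lfloor(k-1)/(4\pi Y)\rfloor$, and near the transition height from $m$ to $m+1$ one has $f(iy)\propto\lambda_f(m)+\lambda_f(m+1)\beta(y)$ and $f(-\tfrac12+iy)\propto\lambda_f(m)-\lambda_f(m+1)\beta(y)$ with $\beta(y)$ increasing from $0$ to $+\infty$; hence a sign change occurs on the first geodesic precisely when $\lambda_f(m)\lambda_f(m+1)<0$ and on the second precisely when $\lambda_f(m)\lambda_f(m+1)>0$. (A similar analysis at a point $x+iy$ with $x\notin\{0,-\tfrac12\}$ shows that, in this approximation, $f$ has no zeros there, so essentially all zeros in $\mathcal F_Y$ lie on the two geodesics.) Thus, up to an error $E_f=O\bigl(1+\#\{m\le M:|\lambda_f(m)|\le k^{-1/4}\}\bigr)$ accounting for heights at which the dominant coefficient is anomalously small,
\[
\#\{\varrho_f\in\mathcal F_Y:\tmop{Re}\varrho_f=0\}=N_f^-+O(E_f),\qquad\#\{\varrho_f\in\mathcal F_Y:\tmop{Re}\varrho_f=-\tfrac12\}=N_f^++O(E_f),
\]
where $N_f^{\mp}:=\#\{1\le m<M:\pm\lambda_f(m)\lambda_f(m+1)<0\}$ and $N_f^-+N_f^+=M-1-O(E_f)$. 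Since moreover $\#\{\varrho_f\in\mathcal F_Y\}=M(1+o(1))$, it remains to produce $\mathcal S_k$ with $|\mathcal S_k|\ge(1-\varepsilon)|\mathcal H_k|$ on which $N_f^-\in[\eta M,(1-\eta)M]$ for some $\eta=\eta(\varepsilon)>0$.

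\emph{Step 2: concentration over the Hecke basis.}
Using the Petersson formula $\sum_{f}^{h}\lambda_f(m)\lambda_f(n)=\delta_{mn}+(\text{negligible when }mn\ll k^2)$ and an anti-concentration bound $\sum_{f}^{h}\mathbf{1}_{\{|\lambda_f(n)|\le\delta\}}\ll_\varepsilon\delta^{1-\varepsilon}$ (valid for $n\ll k^2$, so harmless for $n=m\le M$), one gets $\sum_f^h E_f\ll Mk^{-1/4+o(1)}$ and $\sum_f^h\sum_{m\le M}\lambda_f(m)^2\ll M$, so all but $O(\varepsilon|\mathcal H_k|)$ forms are ``good''. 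For a good form we must show $N_f^-$ concentrates. Since $(m,m+1)=1$, Hecke multiplicativity gives $\lambda_f(m)\lambda_f(m+1)=\lambda_f(m(m+1))$; writing $\epsilon_m(f):=\tmop{sgn}\lambda_f(m(m+1))$ we have $N_f^-=\tfrac12(M-1)-\tfrac12\sum_{m<M}\epsilon_m(f)$, so it suffices to prove
\[
\sum_{f}^{h}\sum_{m<M}\epsilon_m(f)=o(M)\qquad\text{and}\qquad\sum_{f}^{h}\Bigl(\sum_{m<M}\epsilon_m(f)\Bigr)^2=o(M^2),
\]
uniformly for $\sqrt{k\log k}<Y<\delta(\varepsilon)k$. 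The first is small because the vertical ($h$-averaged) distribution of $\lambda_f(m(m+1))$ is symmetric about $0$ up to negligible Kloosterman corrections; for the second, the diagonal $m=m'$ contributes $\ll M$, while for $m\ne m'$ one wants the sign-correlation $\sum_f^h\epsilon_m(f)\epsilon_{m'}(f)$ to be $o(1)$ for all but $O(M^{1+o(1)})$ pairs — the exceptional pairs being those for which $m(m+1)m'(m'+1)$ is a perfect square, counted by a Pell-type argument.

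\emph{Step 3: the main obstacle, and conclusion.}
The crux is estimating these sign-moments. The function $\tmop{sgn}$ is not a polynomial, and although $m(m+1)\le M^2$ with $M\le\sqrt{k/\log k}$, so that Petersson (or the spectral large sieve) does control moments of the $\lambda_f(m(m+1))$, the admissible degree is only bounded — essentially $1$ once the second-moment sum is squared out — leaving no room to replace $\tmop{sgn}$ by a genuine polynomial approximation before the Petersson/large-sieve error overwhelms the main term. The plan is to combine a polynomial (or Fej\'er-kernel) proxy for $\tmop{sgn}$ of the largest admissible degree with the anti-concentration bound, which absorbs the $m$ at which $|\lambda_f(m(m+1))|$ is small, and — in the portion of the range where $M$ is itself only a small power of $k$ (which in particular covers all $Y$ close to $\delta(\varepsilon)k$) — to use that only $O(1)$ many bounded-degree vertical moments are then required, each computed exactly by Petersson. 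Granting the two displayed estimates (ideally with a power saving), Chebyshev's inequality gives $|N_f^--\tfrac12M|=o(M)$ for all but $o(|\mathcal H_k|)$ of the harmonically weighted good forms; removing the harmonic weights in the standard way (at the cost of a further $o(|\mathcal H_k|)$ exceptional set) produces $\mathcal S_k$ with $|\mathcal S_k|\ge(1-\varepsilon)|\mathcal H_k|$ on which $N_f^\pm\ge\tfrac13M$, once $\delta(\varepsilon)$ is chosen small enough that $M=\lfloor(k-1)/(4\pi Y)\rfloor$ lies above the threshold where the $o(M)$-errors drop below $\tfrac16M$. With $c(\varepsilon)=\tfrac13$ this is the theorem. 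The genuinely hard step is the second-moment bound $\sum_f^h\bigl(\sum_{m<M}\epsilon_m(f)\bigr)^2=o(M^2)$: showing that the sign-correlations $\sum_f^h\epsilon_m(f)\epsilon_{m'}(f)$ decay on average, despite the very limited reach of the Petersson formula once one insists on the full range $Y>\sqrt{k\log k}$.
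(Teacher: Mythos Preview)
Your approach diverges substantially from the paper's, and the divergence is at the heart of the matter. The paper does not attempt to control the sign statistics $\epsilon_m(f)=\tmop{sgn}\lambda_f(m(m+1))$ via the Petersson formula at all. Instead it fixes a single form $f$, encodes $\tmop{sgn}\lambda_f(n)$ (on odd $n$ with $|\lambda_f(n)|$ not too small) as a bounded multiplicative function $g$, and invokes the Matom\"aki--Radziwi\l\l\ theorem to compare $\tfrac1h\sum_{x<n\le x+h}g(n)$ with $\tfrac1X\sum_{n\sim X}g(n)$ for almost all $x$. The long average of $|g|$ is $\gg 1$ for all but $\varepsilon|\mathcal H_k|$ forms (by a Murty--Sinha anti-concentration bound at primes), and the long average of $g$ is $o(1)$ for all but $\varepsilon|\mathcal H_k|$ forms (by a Lau--Wu large sieve showing $\sum_{p}\mathbf 1_{g(p)=-1}/p$ is large, feeding into Hal\'asz). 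Thus on the good set the short sums of $g$ change sign in almost every interval of fixed length $h$, and each such sign change yields a real zero via Lemma~3.1. The averaging over $\mathcal H_k$ is used only at the level of primes, never to compute moments of $\tmop{sgn}\lambda_f$ at composite arguments.

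The gap in your plan is exactly the one you flag in Step~3, and it is not closed by what you propose. To make $\sum_f^h\epsilon_m(f)\epsilon_{m'}(f)=o(1)$ via Petersson you would need a polynomial proxy $P$ for $\tmop{sgn}$ on $[-2,2]$ with $L^1$ error $o(1)$ against the Sato--Tate measure, which forces $\deg P\to\infty$; but $\lambda_f(m(m+1))$ already has size $\asymp M^2$, so after squaring and expanding you are asking Petersson to handle arguments of size $M^{4\deg P}$, which exceeds $k^2$ as soon as $M$ is any positive power of $k$. Restricting to ``$M$ a small power of $k$'' does not help: the required degree is still unbounded. The anti-concentration input you quote (which in the paper is stated only for prime arguments) cannot bridge this, since even after excising small $|\lambda_f(m(m+1))|$ you still need to distinguish the two signs, and a bounded-degree polynomial cannot do that uniformly on $\{|x|>\delta\}$. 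In short, the Petersson route to the second-moment bound is blocked; the paper avoids the obstacle entirely by transferring the problem to multiplicative-function theory, where the Matom\"aki--Radziwi\l\l\ machinery applies directly to $\tmop{sgn}\lambda_f$ once it is packaged as a multiplicative function.
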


 The proof of Theorem \ref{thm:realzeravg} relies on a very recent result 
on multiplicative functions by the second and third author \cite{MaRa}.
For individual forms $f$ we cannot do as well, even on the
assumption of the Lindel\"of or Riemann Hypothesis. The reason is the following: In order to produce sign changes
of $f$ we look at sign changes of the coefficients $\lambda_f(n)$. In order to obtain a positive proportion of the
zeros on the line we need a positive proportion of sign changes between the coefficients of $\lambda_f(n)$, in 
appropriate ranges of $n$. However we cannot have a positive proportion of sign changes if for example, for all primes $p \leq (\log k)^{2 - \varepsilon}$, we have $\lambda_f(p) = 0$. Unfortunately even on the Riemann Hypothesis we cannot currently
rule out this scenario.

Nonetheless on the Lindel\"of Hypothesis we can
still obtain the following result, which is significantly stronger than the previous unconditional result. 
\begin{theorem} \label{thm:signs}
Assume the Generalized Lindel\"of Hypothesis. Then
for any $\varepsilon>0$
\begin{equation} \label{first part}
\# \{ \varrho_f \in \mathcal F_Y :  \tmop{Re}(\varrho_f)=0 \}  \gg (k/Y)^{1-\varepsilon} 
\end{equation}
and
\begin{equation} \label{second part}
\# \{ \varrho_f \in \mathcal F_Y :  \tmop{Re}(\varrho_f)=-1/2 \} \gg (k/Y)^{1-\varepsilon},
\end{equation}
provided that $\sqrt{k \log k} < Y < k^{1-\delta}$ for some $\delta>0$.
\end{theorem}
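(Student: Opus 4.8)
The plan is to reduce the count of zeros on each of the two geodesics to a count of sign changes of the Hecke eigenvalues $\lambda_f(n)$ in a suitable range, and then to produce many such sign changes by means of short-interval estimates for multiplicative functions, with the Generalized Lindel\"of Hypothesis entering through effective versions of Rankin--Selberg and of the Sato--Tate law.

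\textbf{Step 1: Reduction to sign changes.} On the geodesic $\tmop{Re}(z)=0$ put $z=iy$ and consider $\phi(y):=f(iy)=\sum_{n\geq 1}\lambda_f(n)n^{(k-1)/2}e^{-2\pi ny}$, which is a real-analytic function of $y$ that is not identically zero; hence every sign change of $\phi$ on $(Y,\infty)$ yields a zero of $f$ on this geodesic lying in $\mathcal F_Y$, and different sign changes give different zeros. On $\tmop{Re}(z)=-1/2$ one argues identically with $g(y):=f(-1/2+iy)=\sum_{n\geq1}(-1)^n\lambda_f(n)n^{(k-1)/2}e^{-2\pi ny}$. So it suffices to exhibit $\gg (k/Y)^{1-\varepsilon}$ sign changes of $\phi$, respectively of $g$.

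\textbf{Step 2: Localization of the Fourier expansion.} Write $h_y(n)=n^{(k-1)/2}e^{-2\pi ny}$; as a function of $n$ this peaks at $n^\ast=\tfrac{k-1}{4\pi y}$ with Gaussian width $\asymp n^\ast/\sqrt k$. The heights occurring in $\mathcal F_Y$ correspond to $1\leq n^\ast\leq X'$ with $X':=\tfrac{k-1}{4\pi Y}\asymp k/Y$, and the hypothesis $\sqrt{k\log k}<Y<k^{1-\delta}$ gives both $k^\delta\ll X'\ll\sqrt{k/\log k}$ and, crucially, width $\ll(\log k)^{-1/2}=o(1)$. Evaluating $\phi$ at $y_n:=\tfrac{k-1}{4\pi n}$ and estimating the neighbouring terms via $\log h_y$ (for $n\leq X'$ the terms $n\pm1$ are suppressed by $k^{-4\pi^2+o(1)}$) together with Deligne's bound on the remaining tail, one obtains $\phi(y_n)=h_{y_n}(n)\bigl(\lambda_f(n)+O(k^{-c})\bigr)$ for any fixed $c<4\pi^2$, and likewise $g(y_n)=h_{y_n}(n)\bigl((-1)^n\lambda_f(n)+O(k^{-c})\bigr)$. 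Thus $\tmop{sign}\,\phi(y_n)=\tmop{sign}\,\lambda_f(n)$ for every $n$ with $|\lambda_f(n)|\geq k^{-c/2}$ (call such $n$ \emph{good}); since $y_n$ decreases in $n$, any two consecutive good $n<n'$ with $\lambda_f(n)\lambda_f(n')<0$ force a zero of $\phi$ with imaginary part in the interval $(y_{n'},y_n)\subset(Y,\infty)$, and these open intervals are pairwise disjoint. Hence
\[
\#\{\varrho_f\in\mathcal F_Y:\ \tmop{Re}(\varrho_f)=0\}\ \geq\ \#\{\text{sign changes of }(\lambda_f(n))\text{ over good }n\leq X'\},
\]
and the same inequality holds on $\tmop{Re}(z)=-1/2$ with $\lambda_f(n)$ replaced by $(-1)^n\lambda_f(n)$.

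\textbf{Step 3: Producing the sign changes; the main obstacle.} It remains to show that $(\lambda_f(n))_{n\leq X'}$, after discarding the near-vanishing entries, has $\gg (X')^{1-\varepsilon}$ sign changes uniformly in $k$, where $k^\delta\ll X'\ll\sqrt{k/\log k}$. For this I would combine two ingredients: (i) the Matom\"aki--Radziwi\l\l\ short-interval machinery (in the spirit of \cite{MaRa}, as used in the proof of Theorem~\ref{thm:realzeravg}), which shows $\lambda_f$ has no systematic sign bias on almost all short intervals $[x,x+H]$; and (ii) the Rankin--Selberg lower bound $\sum_{n\leq t}\lambda_f(n)^2\gg t$, effective under the Generalized Lindel\"of Hypothesis already for $t\asymp k^\delta$, which keeps $\sum_{n\in[x,x+H]}|\lambda_f(n)|$ from being small on most short intervals; together these force a sign change in $\gg (X')^{1-\varepsilon}$ of the $\asymp (X')^{1-\varepsilon}$ disjoint intervals $[jH,(j+1)H]$ with $H=(X')^\varepsilon$. \emph{The main obstacle} is uniformity in $k$, which is exactly why the exponent is $1-\varepsilon$ and not $1$: one cannot exclude $\lambda_f(p)=0$ for every prime $p\leq(\log k)^{2-\varepsilon}$, in which case $\lambda_f(n)=0$ for a proportion $1-\Theta(1/\log\log k)$ of all $n$ and no positive proportion of sign changes can exist. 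This is handled by restricting $n$ to integers with no prime factor below $(\log k)^A$, which costs only a factor $\asymp(\log\log k)^{-1}$ in density (harmlessly absorbed into $\varepsilon$); on such rough $n$, the Generalized Lindel\"of Hypothesis for the symmetric-power $L$-functions of $f$ --- equivalently an effective Sato--Tate law --- shows that all but $o(X'/\log\log k)$ of them are good. Making the Matom\"aki--Radziwi\l\l\ error terms explicit and uniform in $k$ on this set of rough numbers, and running the parallel argument for $(-1)^n\lambda_f(n)$ on the geodesic $\tmop{Re}(z)=-1/2$ (which needs in addition only a crude upper bound, available under the Generalized Lindel\"of Hypothesis, for the shifted-convolution sum $\sum_n\lambda_f(n)\lambda_f(n+1)$), are the remaining technical points.
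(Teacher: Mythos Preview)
Your Steps~1--2 are essentially the paper's reduction (Lemma~\ref{matomaki}): the Fourier expansion localizes at $y_\ell=\tfrac{k-1}{4\pi\ell}$, so sign changes of $\lambda_f(\ell)$ (respectively of $\lambda_f(\ell)$ along odd $\ell$) of size $\geq k^{-\varepsilon}$ produce zeros on the two geodesics.

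Step~3, however, diverges sharply from the paper and contains a real gap. You propose to combine Matom\"aki--Radziwi\l\l, a Rankin--Selberg lower bound, a restriction to rough integers, and an effective Sato--Tate law from GLH for \emph{all} symmetric powers. But the Matom\"aki--Radziwi\l\l\ input is what the paper uses for the \emph{almost-all} theorem (Theorem~\ref{thm:realzeravg}); for the individual-form result under GLH its use is both unnecessary and problematic. Specifically: (i) MR applies to bounded multiplicative functions, so you must pass to something like $\tmop{sgn}(\lambda_f(n))$, and then you still need the \emph{long} average of this sign to be small for an individual $f$ uniformly in $k$---this is exactly the obstruction the paper singles out and does not follow from GLH for $L(s,f)$ alone; (ii) your ``Rankin--Selberg lower bound'' is stated over $[1,t]$, not over short intervals, and passing to $[x,x+H]$ is precisely the nontrivial step; (iii) your appeal to effective Sato--Tate requires GLH for $L(s,\mathrm{sym}^j f)$ for all $j$, a strictly stronger hypothesis than the theorem assumes. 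The ``remaining technical points'' you list are therefore the heart of the matter, not details.

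The paper's route is much shorter and uses only GLH for $L(s,f)$ and $L(s,\mathrm{sym}^2 f)$. Via Perron's formula and Selberg's mean-square trick one gets directly, for almost all $x\sim X$ and $H=X^{\varepsilon}$,
\[
\Bigl|\sum_{x<n\le x+H}\lambda_f(n)\Bigr|\ll H^{1/2}X^{\varepsilon'}
\quad\text{and}\quad
\sum_{x<n\le x+H}\lambda_f(n)^2
=\frac{6}{\pi^2}L(1,\mathrm{sym}^2 f)\,H+O(H^{1/2}X^{\varepsilon'}).
\]
Since $L(1,\mathrm{sym}^2 f)\gg k^{-\varepsilon}$ (Hoffstein--Lockhart) and $|\lambda_f(n)|\le d(n)$, this forces both a positive and a negative value of $\lambda_f$ of size $>X^{-\varepsilon}$ in $[x,x+H]$ for almost all $x\sim X$, giving $\gg X^{1-\varepsilon}$ sign changes immediately---no rough numbers, no MR, no Sato--Tate. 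For the line $\tmop{Re}(z)=-1/2$ the paper does not study $(-1)^n\lambda_f(n)$ or shifted convolutions; it simply repeats the argument with the Euler factor at $p=2$ removed, i.e.\ with $L(s,f)L_2(s,f)^{-1}$ and $L(s,\mathrm{sym}^2 f)L_2(s,\mathrm{sym}^2 f)^{-1}$, which confines the moment estimates to odd $n$.
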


The paper is organized as follows: In Section 2 we investigate the results related to equidistribution in shrinking sets.
In Section 3 we prove the results on zeros high in the cusp. Finally in Section 4 we establish the effective version 
of Quantum Unique Ergodicity. 

\section{Zeros of cusp forms in shrinking geodesic balls}
Let $\phi$ be a smooth function that
is compactly supported within $\mathcal F$.
Also, let $D_r(z)$ be the disk of radius $r$ centered at $z$.
Given a cusp form $f$, and a compact subset $\mathcal{R} \subset \mathcal{F}$, define,
$$
\mu_f(\mathcal{R}) := \iint_{\mathcal{R}} y^k |f(z)|^2 \frac{dx dy}{y^2}. 
$$
Here the form $f$ is assumed to be normalized so that $\mu_f(\mathcal {F}) = 1$.  
Also, let $\Delta=-y^2\Big(\frac{\partial^2}{\partial x^2}+\frac{\partial^2}{\partial y^2}\Big)$
denote the hyperbolic Laplacian.
The main component of the proofs
of Theorems \ref{zeros thm1}, \ref{zeros thmlin} and \ref{zeros thm2}  is the following:
\begin{theorem} \label{equi prop}
Let $f$ be a Hecke cusp form, and, $\mathcal{R} \subset \{ z\in \mathcal F : \tmop{Im}(z) \le B\}$
where $B>1$. Also, let $h(k) > \frac{\log k}{k}$
and $\phi$
be a smooth compactly supported function in $\mathcal{R}$ such
that $\Delta \phi \ll h(k)^{-A}$ for some $A \ge 0$. Suppose for every
 $z_0 \in \mathcal{R}$ 
and $k \ge K(B)$ there exists a point $z_1=x_1+iy_1 \in D_{h(k)}(z_0)$ satisfying
\begin{equation} \label{hypothesis}
y_1^k|f(z_1)|^2 \gg e^{-k h(k)}.
\end{equation}
Then,
\begin{equation} \label{main eq}
\begin{split}
\sum_{\varrho_f }\phi(\varrho_f)=&
\frac{k}{12} \cdot \frac{3}{\pi}  \iint_{\mathcal F}
\phi(z) \frac{dx dy}{y^2}+O_B(k \cdot h(k)^2)\\
&+O_{A, B} \Big(k \cdot h(k) \log 1/h(k)  \iint_{\mathcal F}
|\Delta \phi(z) | \frac{dx \, dy}{y^2} \Big).
\end{split}
\end{equation}
\end{theorem}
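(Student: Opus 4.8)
\subsection*{Proof strategy}

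The plan is to begin from the potential-theoretic formula for the zero sum. Set $G(z)=y^k|f(z)|^2$; this is $\tmop{SL}_2(\mathbb{Z})$-invariant, and the normalisation means $\iint_{\mathcal F}G(z)\,\frac{dx\,dy}{y^2}=1$. Since $\log|f|$ is subharmonic with $\Delta_{\mathrm{euc}}\log|f|=2\pi\sum_\varrho\delta_\varrho$ in the sense of distributions ($\Delta_{\mathrm{euc}}=\partial_x^2+\partial_y^2$), one has $\Delta_{\mathrm{euc}}\log G=-k/y^2+4\pi\sum_\varrho\delta_\varrho$. Pairing with $\phi$, integrating by parts (no boundary term, as $\phi$ is compactly supported), and using $\Delta\phi=-y^2\Delta_{\mathrm{euc}}\phi$ and $\tfrac{k}{4\pi}=\tfrac{k}{12}\cdot\tfrac{3}{\pi}$ gives
$$
\sum_\varrho\phi(\varrho)=\frac{k}{12}\cdot\frac{3}{\pi}\iint_{\mathcal F}\phi(z)\,\frac{dx\,dy}{y^2}-\frac{1}{4\pi}\iint_{\mathcal F}\Delta\phi(z)\,\log G(z)\,\frac{dx\,dy}{y^2}.
$$
Thus everything reduces to bounding $|\log G(z)|$ for $z$ in the compact set $\tmop{supp}\phi$; we may assume throughout that $h(k)$ is smaller than a constant depending on $B$, since \eqref{main eq} is otherwise trivial.

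For the upper bound, apply the sub-mean-value property of $|f|^2$ on a Euclidean disc of radius $\asymp y_0/k$ about $z_0$ --- on which $(\tmop{Im}w)^k\asymp y_0^k$ --- and combine with $\iint_{\mathcal F}G\,\frac{dx\,dy}{y^2}=1$ to get $\log G(z_0)\le 2\log k+O(1)$ for every $z_0$. The lower bound is the crux. Fix $z\in\tmop{supp}\phi$ and let $z_1\in D_{h(k)}(z)$ be the point from \eqref{hypothesis}, so $\log G(z_1)\ge -kh(k)+O(1)$. Writing $\overline{g}_{D}$ for the average of $g$ over a disc $D$, integrate Jensen's formula for $\log|f|$ against $r\,dr$ over $D:=D_{2h(k)}(z)$ to obtain the sub-mean-value inequality with explicit defect,
$$
\log|f(z)|\ \ge\ \overline{\log|f|}_{D}\ -\ \sum_{|\varrho-z|<2h(k)}\log\frac{2h(k)}{|\varrho-z|}.
$$
Now pass back to $\log G=k\log(\tmop{Im}w)+2\log|f|$: the potentially size-$k$ terms $k\log(\tmop{Im}w)$ do no harm, because $\overline{\log(\tmop{Im}w)}_{D_r(z)}-\log(\tmop{Im}z)=O((r/\tmop{Im}z)^2)$ (the first-order variation averages to zero over a disc centred at $z$), so they cancel up to $O(kh(k)^2)$; hence $\log G(z)\ge \overline{\log G}_{D}+O(kh(k)^2)-2\sum_{|\varrho-z|<2h(k)}\log\frac{2h(k)}{|\varrho-z|}$. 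Finally, bound $\overline{\log G}_{D}$ below by comparing it with $\overline{\log G}_{D_{4h(k)}(z_1)}$ (controlling the difference of the two discs by the universal upper bound), and note $\overline{\log G}_{D_{4h(k)}(z_1)}\ge \log G(z_1)+O(kh(k)^2)\gg-kh(k)$ by the same cancellation applied to plain subharmonicity of $\log|f|$ at $z_1$. Altogether
$$
\log G(z)\ \ge\ -C\,kh(k)\ -\ 2\sum_{|\varrho-z|<2h(k)}\log\frac{2h(k)}{|\varrho-z|},\qquad z\in\tmop{supp}\phi.
$$

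An analogous use of Jensen's formula at $z_1$ (again routed through $\log G$ to avoid size-$k$ losses) controls the local counting function: $\#\{\varrho:|\varrho-z|<2h(k)\}\ll kh(k)$ uniformly for $z\in\tmop{supp}\phi$, and then covering a $2h(k)$-neighbourhood of $\tmop{supp}\phi$ by $\ll_B h(k)^{-2}$ such discs gives $\#\{\varrho:\tmop{dist}(\varrho,\tmop{supp}\phi)<2h(k)\}\ll_B k/h(k)$.

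It remains to insert these bounds into $\frac{1}{4\pi}\iint|\Delta\phi|\,|\log G|\,\frac{dx\,dy}{y^2}$. The contribution of $(\log G)^+$ is $\ll\log k\iint|\Delta\phi|\,\frac{dx\,dy}{y^2}\ll kh(k)\iint|\Delta\phi|\,\frac{dx\,dy}{y^2}$, and the $kh(k)+kh(k)^2$ terms from the lower bound are similarly absorbed. The remaining piece is
$$
T:=\iint|\Delta\phi(z)|\sum_{|\varrho-z|<2h(k)}\log\frac{2h(k)}{|\varrho-z|}\,\frac{dx\,dy}{y^2};
$$
swapping summation and integration and splitting each inner integral according to whether $|z-\varrho|\ge h(k)^{A+5}$ or not, the first range is $\ll_A\log(1/h(k))\cdot kh(k)\iint|\Delta\phi|\,\frac{dx\,dy}{y^2}$ by the counting bound, while on the tiny remaining range one uses only $\Delta\phi\ll h(k)^{-A}$, the integrability of $\log(1/|\cdot|)$, and the bound on the number of nearby zeros, obtaining $\ll_{A,B}kh(k)^2$. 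Assembling all of this yields \eqref{main eq}. I expect the main obstacle to be the lower bound for $\log G$: making the propagation of \eqref{hypothesis} from a single point quantitative, keeping track of the near-cancellation of the size-$k$ terms $k\log(\tmop{Im}w)$ when moving between $\log|f|$ and $\log G$, and handling the logarithmic singularities of $\log|f|$ at the zeros --- which are what produce the factor $\log(1/h(k))$.
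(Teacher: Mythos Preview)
Your argument is correct, but it follows a genuinely different route from the paper's. The paper also starts from Rudnick's formula and reduces to bounding $\iint \log(y^{k/2}|f|)\,\Delta\phi$, but then proceeds by a \emph{measure-theoretic} argument: it introduces the exceptional set $\mathcal T_\delta=\{z:\,y^{k/2}|f(z)|<e^{-\delta k}\}$, covers the support of $\phi$ by small discs, and uses Cartan's lemma together with a Titchmarsh--Landau factorisation lemma to show that $\mathcal T_\delta$ is contained in a union of discs of total area $\ll\eta^2$. On the complement $\mathcal F\setminus\mathcal T_\delta$ the integrand is bounded by $k\delta$, while on $\mathcal T_\delta$ the paper applies Cauchy--Schwarz, pairing the small measure against a crude second-moment bound $\iint(\log y^{k/2}|f|)^2\ll k^2$ (itself proved via the Titchmarsh lemma). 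Optimising $\delta$ and $\eta$ gives the result.

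Your approach is more direct and more elementary. You bypass Cartan's lemma, the Titchmarsh--Landau lemma, and the $L^2$ estimate entirely, replacing them by the single device of integrating Jensen's formula against $r\,dr$ to obtain a \emph{pointwise} lower bound $\log G(z)\ge -Ckh(k)-2\sum_{|\varrho-z|<2h(k)}\log\tfrac{2h(k)}{|\varrho-z|}$. The propagation of the hypothesis \eqref{hypothesis} from $z_1$ to the disc average (via subharmonicity, the containment $D_{2h(k)}(z)\subset D_{4h(k)}(z_1)$, and the universal upper bound on the complement) is the clean substitute for the exceptional-set machinery. The final step --- integrating the explicit logarithmic singularities and splitting at radius $h(k)^{A+5}$ --- then plays the role that Cauchy--Schwarz plus the small-measure bound plays in the paper. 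Both arguments extract the factor $\log(1/h(k))$ from the same source, namely the presence of $\ll kh(k)$ zeros in each disc of radius $h(k)$; the paper sees it through the choice $\delta\asymp h(k)\log(1/h(k))$ needed to make $\eta$ small, while you see it as the value of $\log(2h(k)/|z-\varrho|)$ on the outer annulus. One minor remark: with a little more care in the ``tiny range'' estimate (noting that $(A+4)h(k)^{A+4}\le 4h(k)^4$ for $A\ge 0$ and $h(k)$ small) your bound $\ll_{A,B}kh(k)^2$ can be upgraded to $\ll_B kh(k)^2$, matching the stated theorem exactly.
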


By the QUE theorem of Holowinsky and Soundararajan \eqref{hypothesis} holds for fixed, but arbitrarily small $h(k)$.
This reproduces the main result of Rudnick \cite{R}. Additionally, Theorem \ref{effective que} implies that 
\eqref{hypothesis} holds for $h(k) \gg (\log k)^{-\delta+\varepsilon}$
with $\tfrac {1}{7} \cdot (31/2-4\sqrt{15})=0.001152 \ldots$.
Assuming the Generalized Lindel\"of Hypothesis
it follows from an argument of Young \cite{Y} that
\eqref{hypothesis} holds for $h(k) \ge k^{-1/4 +\varepsilon}$. 
\footnote{In Proposition 5.1 of \cite{Y} Young establishes the analog of this for Hecke-Maass cusp forms.
The proof for holomorphic case follows in much the same way.} 
Adapting some ideas from recent unpublished work of Borichev and Sodin
to this setting it should be possible to give a better estimate for the second error term in \eqref{main eq}. In particular,
a consequence of this would improve the range of $r$ in Theorem \ref{zeros thm1} to $r \ge (\log k)^{-\delta}$, whereas we require $r \ge (\log k)^{-\delta/2}$.

\begin{proof}[Proof of Theorems \ref{zeros thm1} and \ref{zeros thmlin}]
Let $\phi_1$ be a smooth function such
that $\phi_1$ has compact support within $B(z_0,r)$,
$\phi_1(z)=1$ for $z \in B(z_0, r-M^{-1})$,
and $\Delta \phi_1 \ll M^{2}$, where $M$ tends to infinity with $k$
and will be chosen later.
Also suppose that $r \ge 2/M$.
 Similarly,
let $\phi_2$ be a smooth function such
that $\phi_2$ has compact support within $B(z_0, r+M^{-1},)$,
$\phi_2(z)=1$ for $z \in B(z_0, r)$,
and $\Delta \phi_2 \ll M^{2}$. We have that
\begin{equation} \notag
\begin{split}
\frac{k}{12} \cdot \frac{3}{\pi} \iint_{\mathcal F}
|\phi_1(z)-\phi_2(z)| \frac{dx dy}{y^2}
\ll& k \cdot \tmop{Area}_{\mathbb H}(B(z_0, r+M^{-1}) \setminus  B(z_0, r-M^{-1})) \\
\ll & k \cdot r \cdot   M^{-1}.
\end{split}
\end{equation}
Also $ \iint_{\mathcal F}
|\Delta \phi_j(z) | \frac{dx \, dy}{y^2} \ll r M$. 
Next, observe that
\[
\sum_{\varrho_f } \phi_1(\varrho_f) \le \# \{ \varrho_f \in B(z_0,r) \} \le \sum_{\varrho_f } \phi_2(\varrho_f).
\]
Thus, Theorem \ref{equi prop} implies
\[
\# \{ \varrho_f \in B(z_0,r) \}=\frac{k}{12} \frac{\tmop{Area}_{\mathbb H}(B(z_0,r))}{\tmop{Area}_{\mathbb H}(\mathcal F)}+O(rM \cdot  k  \cdot h(k) \log 1/h(k))+O(k \cdot r \cdot   M^{-1}).
\]
We take $M=h(k)^{-1/2}$, $h(k)=(\log k)^{-\delta + \varepsilon}$, in the unconditional case, and $M = h(k)^{-1/2}, h(k) = k^{-1/4 + \varepsilon}$ in the conditional case. Using Theorem \ref{effective que} then completes the proof. The exponent $\delta$ is the same exponent as in Theorem \ref{effective que}. 
\end{proof}
For the proof of Theorem \ref{zeros thm2} we recall the work of Luo and Sarnak \cite{LS1}. Define
the probability measure $\nu := (3/\pi) dx dy / y^2$ and denote by $\mathcal{H}_k$ the space of Hecke cusp forms for the full modular group $\tmop{SL}_2(\mathbb{Z})$. Then, Luo and Sarnak (see Corollary 1.2 in \cite{LS1}) showed that
\begin{equation} \label{sphere}
\frac{1}{\# \mathcal{H}_k} \sum_{f \in \mathcal{H}_k} \sup_{B} |\mu_f(B) - \nu(B)|^2 \ll k^{-1/21}
\end{equation}
where the supremum is taken over all geodesic balls $B \subset \mathcal F$.

\begin{proof}[Proof of Theorem \ref{zeros thm2}]
For $r_1 \ge k^{-1/2}$ let
\[
\mathcal E_k(r_1) := \{  f \in \mathcal{H}_k : \exists z_0 \mbox{ s.t. }
 B(z_0,r_1) \subset \mathcal F \mbox{ and } \forall z \in  B(z_0, r_1), \, \,  y^k |f(z)|^2 \le k^{-2}  \}.
\]
Notice that if $f \in \mathcal{H}_k \setminus \mathcal E_k(r_1)$ then we may apply Theorem \ref{equi prop}
with $h(k) \ll r_1$
and argue as in the previous proof to get that for $r \ge \sqrt{r_1}$
\[
\# \{ \varrho_f \in B(z_0,r) \}=\frac{k}{12} \cdot \frac{\tmop{Area}_{\mathbb H}(B(z_0,r))}{\tmop{Area}_{\mathbb H}(\mathcal F)}+O(r  k  \cdot \sqrt{r_1} \log 1/r_1).
\]
It remains to bound the size of $\mathcal E_k(r_1)$.  We apply \eqref{sphere} to see that
\begin{equation} \notag
\begin{split}
 r_1^4 \cdot \# \mathcal E_k(r_1) \ll& \sum_{f \in \mathcal E(r)} \tmop{sup}_{z_0 \in \mathcal F}|\mu_f(B(z_0, r_1))-\nu(B(z_0, r_1))|^2 \\
\ll& \sum_{f \in \mathcal{H}_k} \sup |\mu_f(B)-\nu(B)|^2
\ll k^{20/21},
\end{split}
\end{equation}
where supremum in the second line is over all  hyperbolic balls, $B \subset \mathcal F$. The claim follows taking $r_1 = k^{-\delta}$.
\end{proof}

\subsection{Proof of Theorem \ref{equi prop}}
Let $\phi$ be a smooth function that is compactly supported on $\mathcal F$.
Our starting point is the following formula of Rudnick (see Lemma 2.1 of \cite{R},
note that we assume $\phi$ is supported in $\mathcal F$)
\begin{equation} \label{zeev}
\sum_{\varrho_f} \phi(\varrho_f)
=\frac{k}{12} \cdot \frac{3}{\pi} \iint_{\mathcal F} \phi(z) \frac{dx dy}{y^2}
+\frac{1}{2\pi} \iint_{\mathcal F} \log (y^{k/2} |f(z)|) \Delta \phi(z) \frac{dx dy}{y^2}.
\end{equation}
To prove Theorem \ref{equi prop} we need
to bound the second term in the above formula. The difficulty
here comes in estimating the contribution to the integral over the set where $f$
is exceptionally small.

We first require two auxiliary lemmas, the first of which is due to Cartan.
\begin{lemma}[Theorem 9 of \cite{levin}]
Given any number $H>0$ and complex numbers $a_1,a_2, \ldots, a_n$,
there is a system of circles in the complex plane, with the sum of the 
radii equal to $2H$, such that for each point $z$ lying outside these 
circles one has the inequality
\[
|z-a_1| \cdot |z-a_2| \cdots |z-a_n| > \Big( \frac{H}{e}\Big)^n.
\]
\end{lemma}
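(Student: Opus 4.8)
The plan is to prove this classical Boutroux--Cartan estimate by a greedy construction of the exceptional circles. First I would recast the conclusion: writing the distances from a point $z$ to the $a_j$ in increasing order as $d_1(z)\le\cdots\le d_n(z)$, it is enough to produce a finite family of disks whose radii sum to $2H$ such that every $z$ outside their union obeys $d_k(z)>Hk/n$ for all $k=1,\dots,n$. Indeed, granting this,
\[
|z-a_1|\cdots|z-a_n|=\prod_{k=1}^n d_k(z)>\frac{H^n}{n^n}\,n!>\Big(\frac{H}{e}\Big)^n,
\]
the last step being the elementary inequality $n!>(n/e)^n$ (immediate from $e^n\ge n^n/n!$). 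Moreover the requirement ``$d_k(z)>Hk/n$ for all $k$'' is precisely the assertion that for every $k$ the disk $D_{Hk/n}(z)$ contains fewer than $k$ of the points $a_j$, so it is exactly the $z$ violating this that must be covered.

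Next I would build the disks greedily. Keep a set of surviving points, initially $\{a_1,\dots,a_n\}$. At each stage, as long as points survive, let $k^\ast$ be the largest integer for which some disk of radius $Hk^\ast/n$ contains at least $k^\ast$ surviving points (this is well defined, since a tiny disk about any survivor already gives $k^\ast\ge1$); fix such a disk $D_{Hk^\ast/n}(c)$, let $P$ be the set of all surviving points inside it (so $|P|\ge k^\ast$), record the enlarged disk $D_{2H|P|/n}(c)$, delete $P$ from the survivors, and iterate. Each stage deletes at least $k^\ast\ge1$ points, so the process stops within $n$ stages, and the recorded radii sum to $\tfrac{2H}{n}\sum_{\text{stages}}|P|=\tfrac{2H}{n}\cdot n=2H$, as needed.

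Then I would verify that the recorded disks cover the bad set. Suppose, for contradiction, that some $z$ lies outside every recorded disk yet $D_{Hk_0/n}(z)$ contains at least $k_0$ of the $a_j$. Since all survivors are eventually deleted, there is a first stage $i$ at which the number of surviving points inside $D_{Hk_0/n}(z)$ drops below $k_0$; at the start of stage $i$ that count is still $\ge k_0$, so by maximality of $k^\ast_i$ we have $k^\ast_i\ge k_0$, and the point $w$ deleted at stage $i$ from $D_{Hk_0/n}(z)$ belongs to the stage-$i$ disk $D_{Hk^\ast_i/n}(c_i)$. Hence
\[
|z-c_i|\le|z-w|+|w-c_i|\le \frac{Hk_0}{n}+\frac{Hk^\ast_i}{n}\le\frac{2Hk^\ast_i}{n}\le\frac{2H|P_i|}{n},
\]
so $z$ lies in the $i$-th recorded disk $D_{2H|P_i|/n}(c_i)$, a contradiction. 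Therefore $d_k(z)>Hk/n$ for every $k$ whenever $z$ avoids the recorded disks, and the product bound follows as in the first paragraph.

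The heart of the matter is calibrating the greedy step so that two opposing constraints hold simultaneously: the recorded disk must be enlarged by the factor $2$ (from radius $Hk^\ast/n$ to $2H|P|/n$) in order to absorb the triangle-inequality slack in the covering argument, while its radius, charged to the $\ge k^\ast$ points removed at that stage, must still leave the grand total equal to $2H$. Everything else --- termination of the procedure, the reading of ``$D_{Hk/n}(z)$ has fewer than $k$ points'' as $d_k(z)>Hk/n$, and the estimate $n!>(n/e)^n$ --- is routine.
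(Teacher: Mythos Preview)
Your argument is correct and is essentially the classical greedy proof of the Boutroux--Cartan estimate. Note, however, that the paper does not actually prove this lemma: it is quoted as Theorem~9 of \cite{levin} and used as a black box, so there is no ``paper's own proof'' to compare against. Your write-up supplies precisely the standard proof one finds in Levin's book, with the key calibration (enlarging each greedy disk by a factor of $2$ so that the triangle inequality closes while the radii still total $2H$) handled cleanly.
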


For $z_0 \neq \varrho_f$ define
\[
M_r(z_0):= \max_{|z-z_0| \le r} \bigg|
\frac{f(z)}{f(z_0)} \bigg|+3.
\]
The next lemma is from Titchmarsh \cite{T} (see Lemma $\alpha$ of section 3.9, especially formula (3.9.1)).
\begin{lemma} \label{lem:landau}
Let $g(z)$ be a holomorphic function on $|z-z_0| \le r$, with $g(z_0) \neq 0$.
Then there is an absolute constant $A>1$ such that for $|z-z_0| \le r/4$
\[
\bigg| \log \left|\frac{g(z)}{g(z_0)} \right|-\sum_{|\rho-z_0| \le r/2} \log \left|\frac{z-\rho}{z_0-\rho}\right|\bigg|
< A \log M_r(z_0),
\]
where the summation runs over zeros $\rho$ of $g$.
\end{lemma}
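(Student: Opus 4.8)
The plan is to run the classical Borel--Carath\'eodory argument, but first to peel off the zeros of $g$ inside the intermediate disc $|z-z_0|\le r/2$ by means of Blaschke factors taken with respect to the \emph{outer} radius $r$. After a translation we may assume $z_0=0$, and after dividing $g$ by $g(0)$ we may assume $g(0)=1$, so that $M:=M_r(z_0)=\max_{|z|\le r}|g(z)|+3\ge 4$; in particular $\log M>0$. The first step is to bound the number $n$ of zeros $\rho_1,\dots,\rho_n$ of $g$ lying in $|z|\le r/2$: by Jensen's formula,
\[
n\log 2\le \sum_{j=1}^n\log\frac{r}{|\rho_j|}\le \frac1{2\pi}\int_0^{2\pi}\log|g(re^{i\theta})|\,d\theta\le \log M,
\]
so $n\le \log M/\log 2$. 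This is where the additive constant $3$ in the definition of $M_r(z_0)$ earns its keep: it keeps $\log M$ bounded away from $0$, so that this count (and the final estimate) remain meaningful no matter how small $g$ is on the disc.

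Next I would form
\[
\psi(z):=g(z)\prod_{j=1}^{n}\frac{r^{2}-\overline{\rho_j}z}{r(z-\rho_j)},
\]
and record its three basic properties. First, $\psi$ is holomorphic on $|z|\le r$, since the poles of the Blaschke factors at the $\rho_j$ are cancelled by the corresponding zeros of $g$. Second, $\psi$ has no zeros on the closed disc $|z|\le r/2$: we have removed exactly the zeros of $g$ in that disc, and every remaining zero has $|\rho|>r/2$. Third, each factor has modulus $1$ on $|z|=r$ (because $r^{2}=z\overline z$ there), so by the maximum principle $|\psi(z)|\le \max_{|z|=r}|g(z)|\le M$ throughout $|z|\le r$, while $|\psi(0)|=\prod_j(r/|\rho_j|)\ge 1$. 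Choosing a holomorphic branch of $\log\psi$ on the simply connected, zero-free disc $|z|\le r/2$ and applying the Borel--Carath\'eodory theorem with inner radius $r/4$ and outer radius $r/2$ gives, for $|z|\le r/4$,
\[
\bigl|\log\psi(z)-\log\psi(0)\bigr|\le \frac{2(r/4)}{r/2-r/4}\bigl(\log M-\log|\psi(0)|\bigr)\le 2\log M,
\]
and taking real parts, $\bigl|\log|\psi(z)|-\log|\psi(0)|\bigr|\le 2\log M$ on $|z|\le r/4$.

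The final step is to pass from $\psi$ back to $g$. Expanding the logarithms yields the exact identity
\[
\log\Bigl|\frac{g(z)}{g(0)}\Bigr|-\sum_{j=1}^{n}\log\Bigl|\frac{z-\rho_j}{-\rho_j}\Bigr|=\bigl(\log|\psi(z)|-\log|\psi(0)|\bigr)-\sum_{j=1}^{n}\log\Bigl|1-\frac{\overline{\rho_j}z}{r^{2}}\Bigr|,
\]
and for $|z|\le r/4$ and $|\rho_j|\le r/2$ we have $|\overline{\rho_j}z/r^{2}|\le 1/8$, so each term of the last sum is $O(1)$ and the whole sum is at most $n\log(9/8)\le (\log(9/8)/\log 2)\log M$. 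Combining this with the bound from the previous step proves the lemma with an absolute constant $A=2+\log(9/8)/\log 2<3$ (in particular $A>1$).

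The only genuinely delicate points are the two bookkeeping choices that make everything fit: taking the Blaschke factors with respect to the outer radius $r$ — so that $\psi$ inherits a clean supremum bound from $g$ through the maximum principle — while cancelling only the zeros in the middle disc $|z|\le r/2$, so that $\psi$ stays zero-free on a disc strictly larger than the target disc $|z|\le r/4$ and Borel--Carath\'eodory has room to operate; and then verifying that the discrepancy between the Blaschke factors and the naive linear factors $z-\rho_j$ contributes only the harmless correction terms $\log|1-\overline{\rho_j}z/r^{2}|$, whose size is controlled precisely by the zero count $n$ bounded above. Neither step poses a real difficulty; the argument is essentially Titchmarsh's, and the work is in keeping the radii and the elementary estimates consistent.
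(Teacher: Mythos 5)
Your proof is correct; the paper does not prove this lemma itself but quotes it from Titchmarsh (Lemma $\alpha$ of \S 3.9), and your argument --- cancelling the zeros in $|z-z_0|\le r/2$ by Blaschke factors relative to the outer radius $r$, then applying Borel--Carath\'eodory on the zero-free disc --- is exactly the standard proof given there. The only microscopic slip is that the correction terms satisfy $\bigl|\log|1-\overline{\rho_j}z/r^{2}|\bigr|\le\log(8/7)$ rather than $\log(9/8)$, since the lower bound $7/8$ is the binding one; this changes nothing in the conclusion.
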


Let $\mathcal D$ be the convex hull of $\tmop{supp} \phi$.
Let $\eta,\varepsilon>0$.
We cover $\mathcal D$ with $N$ disks of radius $\varepsilon$
centered at the points $a_1, \ldots, a_N$.
The disks are chosen so that $N \ll \tmop{Area}(\mathcal D)/\varepsilon^2$. 
Define
\[
\mathcal T_{\delta} = \{ z \in \mathcal F :
|f(z) y^{k/2}| < e^{-\delta k} \}
\qquad \mbox{and} \qquad
\mathcal T_{\delta,j}= \mathcal T_{\delta}\bigcap D_{\varepsilon}(a_j) .
\]
Let $n_j=\# \{\varrho_f : \varrho_f \in D_{16\varepsilon}(a_j) \}$
and set
\[
 \mathcal S_{\eta, j}=\Big\{z \in D_{\varepsilon}(a_j) : \prod_{\varrho_f \in D_{8\varepsilon}(a_j)} |z-\varrho_f| < \Big( \frac{\eta \varepsilon}{e}\Big)^{n_j} \Big\}.
\]
By Cartan's lemma the area of $\mathcal{S}_{\eta,j}$
is $\le 4 \pi \eta^2 \varepsilon^2$.

\begin{lemma} \label{max}
Suppose that
  $\varepsilon  >  \log k/k$ and $f(z_0) y_0^{k/2} \gg e^{-\varepsilon \cdot k}$.
Then there exists a constant $C>1$
such that
\[
M_{16 \varepsilon}(z_0) \ll e^{C \varepsilon \cdot k}.
\]
\end{lemma}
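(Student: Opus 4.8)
The goal is to bound $M_{16\varepsilon}(z_0) = \max_{|z-z_0|\le 16\varepsilon}|f(z)/f(z_0)| + 3$ given the lower bound $y_0^{k/2}|f(z_0)| \gg e^{-\varepsilon k}$. The plan is to control the numerator $\max_{|z-z_0|\le 16\varepsilon}|f(z)|$ from above by a pointwise upper bound on $|f(z)|$ valid throughout a fixed-size neighborhood of $z_0$, and then divide by the assumed lower bound on $|f(z_0)|$.

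First I would invoke the standard sup-norm / pointwise bound for a normalized holomorphic Hecke cusp form of weight $k$: since $f$ is normalized so that $\mu_f(\mathcal F)=1$, one has $y^{k/2}|f(z)| \ll k^{1/4}e^{o(k)}$ uniformly (indeed, the trivial bound $y^{k/2}|f(z)|\ll k^{1/2}$ suffices, coming from the fact that $y^k|f(z)|^2$ is $\mathrm{SL}_2(\mathbb Z)$-invariant and integrates to $1$ against a measure of total mass $\asymp 1$, combined with a mean-value/subharmonicity argument on a Planck-scale ball; even a crude bound with an extra factor $e^{\varepsilon k/2}$ is harmless here). Thus for every $z$ with $|z-z_0|\le 16\varepsilon$ we get $|f(z)| \ll k^{1/2} y^{-k/2}$.

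Next I would control the ratio of the $y$-factors. Writing $z = x+iy$ and $z_0 = x_0+iy_0$ with $|z-z_0|\le 16\varepsilon$, we have $|y - y_0|\le 16\varepsilon$, hence $y/y_0 = 1 + O(\varepsilon/y_0)$; since $z_0$ ranges over a compact set $\mathcal R$ contained in $\{\,\mathrm{Im}(z)\le B\,\}$ but bounded away from the cusp (the support of $\phi$ is compact in $\mathcal F$), $y_0 \asymp_B 1$, so $y/y_0 = 1 + O_B(\varepsilon)$ and therefore $(y_0/y)^{k/2} \ll e^{O_B(\varepsilon k)}$. Combining, for $|z-z_0|\le 16\varepsilon$,
\[
\left|\frac{f(z)}{f(z_0)}\right| \ll \frac{k^{1/2} y^{-k/2}}{y_0^{-k/2} e^{-\varepsilon k}} = k^{1/2}\left(\frac{y_0}{y}\right)^{k/2} e^{\varepsilon k} \ll k^{1/2} e^{O_B(\varepsilon k)} e^{\varepsilon k}.
\]
Since $\varepsilon > \log k / k$ we have $k^{1/2} = e^{(\log k)/2} \le e^{\varepsilon k /2} \le e^{O(\varepsilon k)}$, so the whole right-hand side is $\ll e^{C\varepsilon k}$ for a suitable constant $C = C(B) > 1$, and adding $3$ changes nothing. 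This yields $M_{16\varepsilon}(z_0) \ll e^{C\varepsilon k}$.

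The only genuinely delicate point is the upper bound for $y^{k/2}|f(z)|$: one wants a bound of the shape $e^{o(\varepsilon k)}$, or at worst $e^{c\varepsilon k}$ with $c$ absorbable into $C$, uniformly on $\mathcal R$. The cleanest route is the mean-value bound: $y^k|f(z)|^2$ is $\mathrm{SL}_2(\mathbb Z)$-invariant, so its value at any point is bounded by its average over a hyperbolic ball of fixed small radius about that point, which in turn is $\ll \mu_f(\mathcal F) = 1$ up to the (bounded) measure of the ball — this gives $y^{k/2}|f(z)|\ll 1$ outright, with an implied constant depending only on the chosen radius, and the lemma follows with room to spare. (If one prefers to avoid invariance subtleties near the boundary of $\mathcal F$, the subharmonicity of $\log|f|$ on the Euclidean disk $D_{16\varepsilon+o(1)}(z_0)$ together with the $L^2$ normalization gives the same conclusion.) Everything else is bookkeeping with the elementary estimate $(1+O(\varepsilon))^{k} = e^{O(\varepsilon k)}$ and the hypothesis $\varepsilon > \log k/k$.
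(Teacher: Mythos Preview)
Your proof is correct and follows essentially the same route as the paper: write $|f(z)/f(z_0)| = (y_0/y)^{k/2}\cdot |y^{k/2}f(z)|/|y_0^{k/2}f(z_0)|$, bound the numerator by the sup-norm estimate $y^{k/2}|f(z)|\ll k^{1/2}$ (the paper cites Rudnick's Proposition~A.1 for this), bound the denominator from below by hypothesis, control $(y_0/y)^{k/2}\le e^{O(\varepsilon k)}$ via $|y-y_0|\le 16\varepsilon$ and $y_0\ge\sqrt{3}/2$, and finally absorb $k^{1/2}$ using $\varepsilon>\log k/k$. One small caveat: your aside that a mean-value/subharmonicity argument yields $y^{k/2}|f(z)|\ll 1$ outright is not quite right as stated, since $y^k|f(z)|^2$ is not subharmonic; the clean sup-norm bound $\ll k^{1/2}$ (which you correctly invoke as sufficient) is the standard input here.
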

\begin{proof}
There is a point $z_{\tmop{max}}=x_{\tmop{max}}+iy_{\tmop{max}}$ such that
\[
\max_{z \in D_{16 \varepsilon}(z_0) } \bigg| \frac{f(z)}{f(z_0)} \bigg|
=\bigg| \frac{f(z_{\tmop{max}})}{f(z_0)} \bigg|=\Big(\frac{y_{0}}{y_{\tmop{max}}}\Big)^{k/2} \cdot 
\bigg| \frac{y_{\tmop{max}}^{k/2} f(z_{\tmop{max}})}{y_0^{k/2} f(z_0)} \bigg|.
\]
By Proposition A.1 of Rudnick \cite{R} we have $|y_{\tmop{max}}^{k/2} f(z_{\tmop{max}})| \ll k^{1/2}$.
(Note that Xia \cite{Xia} has recently improved this bound to $\ll k^{-1/4+\epsilon}$,
but we do not need that here.) Also, $y_0^{k/2} f(z_0) \gg e^{-\varepsilon k}$
and
\[
\Big(\frac{y_{0}}{y_{\tmop{max}}}\Big)^{k/2}\le \Big(\frac{y_{0}}{y_{0}-16\varepsilon} \Big)^{k/2}
\le e^{C \varepsilon k }.
\]
Combining these bounds we see that
\[
M_{16 \varepsilon}(z_0) \ll k^{1/2} e^{\varepsilon \cdot k} \cdot e^{C \varepsilon k} \ll e^{ C' \varepsilon k}.
\]
\end{proof}

\begin{lemma} \label{exceptional}
Suppose $\varepsilon>\log k/k$
and that for all $z_0 \in \mathcal F$ there exists a point $z_1=x_1+iy_1 \in D_{\varepsilon}(z_0)$ such that
$y_1^k|f(z_1)|^2  \gg e^{- \varepsilon k}$. Then there is
an absolute constant $\tfrac12>c_0>0$ such that for $\delta \ge 1/c_0 \cdot \varepsilon$ we have whenever
$\eta > \exp(-c_0 \delta/\varepsilon)$ that
\[
\mathcal T_{\delta,j} \subset \mathcal S_{\eta,j}
\]
for each $j=1, \ldots, N$.
\end{lemma}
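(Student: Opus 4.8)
If $z=x+iy\in\mathcal T_{\delta,j}$ happens to be a zero of $f$ the inclusion $z\in\mathcal S_{\eta,j}$ is trivial, since then $z\in D_{8\varepsilon}(a_j)$ and $\prod_{\varrho_f\in D_{8\varepsilon}(a_j)}|z-\varrho_f|=0$; so fix $z\in\mathcal T_{\delta,j}$ with $f(z)\ne0$, so that $z\in D_\varepsilon(a_j)$ and $y^{k/2}|f(z)|<e^{-\delta k}$. The plan is to show that this smallness of $f$ at $z$ is entirely accounted for by the zeros of $f$ near $a_j$. Applying the hypothesis of the lemma at the point $z$ produces a good point $z_1=x_1+iy_1\in D_\varepsilon(z)$ with $y_1^{k/2}|f(z_1)|\gg e^{-\varepsilon k/2}$, so $|z-z_1|<\varepsilon$ and $|z_1-a_j|<2\varepsilon$. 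Since $\varepsilon>\log k/k$, Lemma~\ref{max} (whose proof works verbatim with $20\varepsilon$ in place of $16\varepsilon$) gives $M_{20\varepsilon}(z_1)\ll e^{C\varepsilon k}$ for an absolute $C$; moreover Jensen's formula applied to $f$ on a disk of radius $\asymp\varepsilon$ about $z_1$, together with $|y^{k/2}f|\ll k^{1/2}$ (Proposition~A.1 of~\cite{R}) and $y_1^{k/2}|f(z_1)|\gg e^{-\varepsilon k/2}$, yields the crude bound $n_j\le\#\{\varrho_f\in D_{16\varepsilon}(a_j)\}\ll\varepsilon k$, which I will use repeatedly.

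Next I would apply Lemma~\ref{lem:landau} to $g=f$ with centre $z_1$ and radius $r=20\varepsilon$, which is legitimate since $f(z_1)\ne0$ and $|z-z_1|<\varepsilon\le r/4$, and since $|z_1-a_j|<2\varepsilon$ the disk $D_{10\varepsilon}(z_1)$ appearing there contains $D_{8\varepsilon}(a_j)$. Writing $\log|f(z)/f(z_1)|=\log(y^{k/2}|f(z)|)-\log(y_1^{k/2}|f(z_1)|)+\tfrac k2\log(y_1/y)$ and using that $z\in\mathcal T_\delta$ (first term $<-\delta k$), that $z_1$ is a good point ($-\log(y_1^{k/2}|f(z_1)|)\le\varepsilon k$ for $k$ large), and that $|y-y_1|<\varepsilon$ with $y,y_1\asymp1$ (last term $\ll\varepsilon k$), we get $\log|f(z)/f(z_1)|<-\delta k+O(\varepsilon k)$; combined with Lemma~\ref{lem:landau} and $A\log M_{20\varepsilon}(z_1)\ll\varepsilon k$ this yields
\[
\sum_{|\varrho_f-z_1|\le 10\varepsilon}\log\Bigl|\tfrac{z-\varrho_f}{z_1-\varrho_f}\Bigr|<-\delta k+O(\varepsilon k).
\]

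It remains to convert this into the product bound defining $\mathcal S_{\eta,j}$. A zero $\varrho_f$ with $|\varrho_f-z_1|\le 10\varepsilon$ but $\varrho_f\notin D_{8\varepsilon}(a_j)$ lies in the annulus $8\varepsilon<|w-a_j|\le 12\varepsilon$, where $|z-\varrho_f|\asymp|z_1-\varrho_f|\asymp\varepsilon$, so each corresponding term is $O(1)$; there are $\le n_j\ll\varepsilon k$ of them, so discarding them changes the sum by $O(\varepsilon k)$. Since $|z_1-\varrho_f|<9\varepsilon$ for $\varrho_f\in D_{8\varepsilon}(a_j)$, this gives
\[
\sum_{\varrho_f\in D_{8\varepsilon}(a_j)}\log|z-\varrho_f|<-\delta k+O(\varepsilon k)+\#\{\varrho_f\in D_{8\varepsilon}(a_j)\}\cdot\log(9\varepsilon),
\]
and comparing with $n_j\log(\eta\varepsilon/e)=-n_j\bigl(1+\log(1/\eta)+\log(1/\varepsilon)\bigr)$ while invoking $n_j\ll\varepsilon k$, $\#\{\varrho_f\in D_{8\varepsilon}(a_j)\}\le n_j$, $\log(1/\eta)<c_0\delta/\varepsilon$ and $\delta\ge\varepsilon/c_0$, and choosing the absolute constant $c_0$ small enough, the term $-\delta k$ dominates the right-hand side and one concludes $\sum_{\varrho_f\in D_{8\varepsilon}(a_j)}\log|z-\varrho_f|<n_j\log(\eta\varepsilon/e)$, i.e.\ $z\in\mathcal S_{\eta,j}$.

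The step I expect to need the most care is this last comparison: one has to check that all the contributions coming from the zeros of $f$ near $z$ — the $O(1)$ terms from the zeros in the buffer annulus $D_{16\varepsilon}(a_j)\setminus D_{8\varepsilon}(a_j)$, the term $\#\{\varrho_f\in D_{8\varepsilon}(a_j)\}\log(1/\varepsilon)$ coming from the denominators $|z_1-\varrho_f|$, and the terms $n_j\log(1/\eta)$ and $n_j\log(1/\varepsilon)$ coming from the exponent $n_j$ (rather than $\#\{\varrho_f\in D_{8\varepsilon}(a_j)\}$) built into the definition of $\mathcal S_{\eta,j}$ — are dominated by the main saving $\delta k$ produced by Landau's lemma. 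It is precisely this requirement that dictates the quantitative form of the hypotheses $\delta\ge\varepsilon/c_0$ and $\eta>e^{-c_0\delta/\varepsilon}$ and fixes how small $c_0$ must be, with the interplay of $n_j$, $\#\{\varrho_f\in D_{8\varepsilon}(a_j)\}$ and $n_j-\#\{\varrho_f\in D_{8\varepsilon}(a_j)\}$ being the heart of the matter. A secondary, purely geometric, issue — handled above by taking the radius in Landau's lemma slightly larger than $16\varepsilon$ — is the mismatch between the disk $D_{8\varepsilon}(z_1)$ natural for Landau's lemma and the disk $D_{8\varepsilon}(a_j)$ appearing in $\mathcal S_{\eta,j}$.
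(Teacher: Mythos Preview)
Your approach is essentially the same as the paper's: both apply Landau's lemma (Lemma~\ref{lem:landau}) centered at a nearby good point where $y^{k/2}|f|\gg e^{-\varepsilon k/2}$, bound $\log M_r$ by $O(\varepsilon k)$ via Lemma~\ref{max}, and control $n_j\ll\varepsilon k$ via Jensen's formula. The differences are cosmetic: the paper places its good point $z_j$ near the center $a_j$ (rather than near $z$), applies Landau with radius $8\varepsilon$ (rather than $20\varepsilon$), and packages the final step as a contradiction---assuming $z\in\mathcal T_{\delta,j}\setminus\mathcal S_{\eta,j}$ and deducing $\log M_{16\varepsilon}(z_j)\gg\delta k/\log(1/\eta)$, incompatible with Lemma~\ref{max}---rather than as the direct inequality you wrote down; your identification of the last comparison as the delicate point is accurate.
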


\begin{proof}
By assumption, for each $j=1, \ldots, N$
there exists a point $z_j \in D_{\varepsilon}(a_j)$ such that
$|f(z_j)| \gg e^{-\varepsilon k} y_j^{-k/2}$.
If $z \in \mathcal T_{\delta,j}$
then 
\begin{equation} \label{eq:lowerbound}
\bigg|\frac{ f(z)}{f(z_j)} \bigg| \ll \Big(\frac{y_j}{y} \Big)^{k/2}  e^{-\delta k+\varepsilon k} \le \Big(\frac{y+2\varepsilon}{y} \Big)^{k/2}  e^{-\delta k+\varepsilon k} \le e^{-\delta k+3\varepsilon k}\le e^{-\delta k /4} .
\end{equation}
By Lemma \ref{lem:landau}
if $z_0 \neq \varrho_f$ there is a constant $A>1$ such that
 for $|z-z_0| \le \tfrac14 r$
\begin{equation} \notag
\bigg|\log \bigg|\frac{f(z)}{f(z_0)}\bigg| +\sum_{\varrho_f \in D_{r/2}(z_0)} \log \bigg|\frac{ z_0-\varrho_f}{z-\varrho_f}
\bigg| \bigg| < A \cdot \log M_r(z_0).
\end{equation}
Using this with $z_0=z_j$ along with \eqref{eq:lowerbound} we get that
for $z \in \mathcal T_{\delta, j}$
\begin{equation} \label{one bd}
-A \log M_{8 \varepsilon}(z_j) < -\delta k/5 + \sum_{\varrho_f\in D_{4 \varepsilon}(z_j)} \log \bigg|\frac{z_j-\varrho_f}{z-\varrho_f} \bigg|.
\end{equation}

For $z \in D_{\varepsilon}(a_j) \setminus \mathcal S_{\eta,j}$
\begin{equation} \label{two bd}
\sum_{\varrho_f \in D_{4\varepsilon}(z_j)} \log \bigg|\frac{z_j-\varrho_f}{z-\varrho_f} \bigg|
\le  \log \prod_{\varrho_f \in D_{4\varepsilon}(z_j)}\frac{4\varepsilon}{|z-\varrho_f|} \le
n_j \log \frac{4e}{\eta }< A' \log M_{16 \varepsilon}(z_j ) \log \frac{4e}{\eta} ,
\end{equation}
for some absolute constant $A'>0$ and the last inequality follows from Jensen's formula
(we have also used the inequality $\prod_{\varrho_f \in D_{4\varepsilon}(z_j)} |z-\varrho_f|>\prod_{\varrho_f \in D_{8\varepsilon}(a_j)} |z-\varrho_f|$ for $|z-a_j|<\varepsilon$). 

For the sake of contradiction, suppose that $\mathcal T_{\delta,j}$ is not
contained in $\mathcal S_{\eta,j}$. Then
combining \eqref{one bd} and \eqref{two bd} it follows that
\[
\log M_{16 \varepsilon}(z_j)>  \frac{\delta k}{5(A+A'\log 4e/\eta)}.
\]
However, by Lemma \ref{max} 
$\log M_{16\varepsilon}(z_j)\ll \varepsilon k $, so that a contradiction
is reached when $c_0$ is sufficiently small.
\end{proof}

A simple consequence of the previous lemma gives us a
bound on the size of our exceptional set $\mathcal T_{\delta}$.
This is one of the main ingredients in the proof of Theorem \ref{equi prop}.
Observe that under the hypotheses of the previous lemma
\begin{equation} \label{meas small}
\tmop{meas}(\mathcal T_{\delta} \cap \mathcal D)  \le \sum_{j=1}^N \tmop{meas}(\mathcal T_{\delta,j}) \le
\sum_{j=1}^N \tmop{meas}(\mathcal S_{\eta,j})  \le N 4 \pi^2 \eta^2 \varepsilon^2
\ll \eta^2.
\end{equation}

We also require the following crude, yet sufficient bound on the second moment
of $\log y^{k/2} |f(z)|$.
\begin{lemma} \label{sq bd}
We have
\[
\iint_{\mathcal D} (\log (y^{k/2} |f(z)|))^2  dx \, dy \ll k^2.
\]
\end{lemma}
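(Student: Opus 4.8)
The plan is to split the integrand into an upper part and a lower part: for a fixed point $z\in\mathcal D$ we have $(\log(y^{k/2}|f(z)|))^2 \le 2(\log^+(y^{k/2}|f(z)|))^2 + 2(\log^-(y^{k/2}|f(z)|))^2$, where $\log^+ t = \max(\log t, 0)$ and $\log^- t = \max(-\log t,0)$. The upper part is harmless: since $\mathcal D$ is a fixed compact set bounded away from the cusp, Rudnick's sup-norm bound $y^{k/2}|f(z)| \ll k^{1/2}$ (Proposition A.1 of \cite{R}, already invoked in Lemma \ref{max}) gives $\log^+(y^{k/2}|f(z)|) \ll \log k$ pointwise, so this contributes $\ll (\log k)^2 \cdot \tmop{Area}(\mathcal D) \ll k^2$ to the integral, with plenty to spare.

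The substance is the lower part, i.e.\ controlling $\iint_{\mathcal D}(\log^-(y^{k/2}|f(z)|))^2\,dx\,dy$, which is large exactly where $f$ is very small. First I would reduce to the holomorphic function: on the compact set $\mathcal D$ the factor $y^{k/2}$ satisfies $y^{k/2} \gg e^{-Ck}$ for some constant $C$ depending only on $\mathcal D$, so $\log^-(y^{k/2}|f(z)|) \le \log^-|f(z)| + Ck$, and it suffices to bound $\iint_{\mathcal D}(\log^-|f(z)|)^2\,dx\,dy$. Now I would use the standard layer-cake decomposition of this integral, $\iint_{\mathcal D}(\log^-|f(z)|)^2\,dx\,dy = \int_0^\infty 2t\cdot\tmop{meas}\{z\in\mathcal D : |f(z)| < e^{-t}\}\,dt$, and feed in a bound on the sublevel sets of $|f|$ coming from Cartan's lemma (Lemma 2.7). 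Concretely, writing $f$ locally via its Hadamard/Jensen factorization on a slightly larger disk, the number of zeros of $f$ in any disk of radius $O(1)$ inside $\mathbb H$ is $\ll k$ (this is the $k/12$ zero count, or can be had from Jensen's formula together with the sup-norm bound), so Cartan's lemma applied to these $\ll k$ zeros shows that outside a union of disks of total radius $2H$ one has $|f(z)| \ge (H/e)^{O(k)} \cdot (\text{a factor from the non-vanishing part})$; equivalently $\{z\in\mathcal D:|f(z)|<e^{-t}\}$ is covered by disks of total radius $\ll e^{-ct/k}$ for an absolute $c>0$, hence has measure $\ll e^{-2ct/k}$ (once $t$ exceeds a constant multiple of $k$; for smaller $t$ we use the trivial bound $\tmop{meas}\le\tmop{Area}(\mathcal D)$).

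Plugging this into the layer-cake formula, the contribution of $0 \le t \ll k$ is $\ll k^2$, and the tail $\int_{Ck}^\infty 2t\cdot e^{-2ct/k}\,dt \ll k^2$ as well, so altogether $\iint_{\mathcal D}(\log^-|f(z)|)^2\,dx\,dy \ll k^2$. Combining with the $(Ck)^2$ shift from the $y^{k/2}$ factor and with the upper-part bound yields $\iint_{\mathcal D}(\log(y^{k/2}|f(z)|))^2\,dx\,dy \ll k^2$, as claimed. The main obstacle is the bookkeeping in the middle step: one must be careful that Cartan's lemma is being applied on a disk on which $f$ is holomorphic and, after dividing out the zeros, genuinely bounded below — this requires knowing $|f|$ is not small at \emph{some} nearby point, which is exactly the kind of input supplied by the sup-norm lower bounds available from the Fourier expansion (or, more cheaply, one can absorb any such loss since we only need the crude bound $\ll k^2$, not anything sharp). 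Because the target exponent is so lossy, none of the estimates need to be pushed; the only real point is that the sublevel sets of $|f|$ shrink geometrically on the scale $t \asymp k$, which is precisely the Cartan/Jensen mechanism already used for $\mathcal T_\delta$ in \eqref{meas small}.
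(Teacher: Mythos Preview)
Your approach is correct and reaches the same conclusion, but it is organized differently from the paper's proof. The paper works pointwise: on each of the $N=O(1)$ covering disks it picks a reference point $c_j$ outside the Cartan set $\mathcal S_{\eta,j}$ (so $\prod|c_j-\varrho_f|$ is bounded below), uses Lemma~\ref{exceptional} at \emph{fixed} scale $\varepsilon$ to guarantee $|f(c_j)|\gg e^{-Ck}$, and then the Landau formula (Lemma~\ref{lem:landau}) together with Lemma~\ref{max} gives the explicit pointwise identity $\log|f(z)|=\sum_{\varrho_f}\log|z-\varrho_f|+O(k)$. Squaring this and integrating each $(\log|z-\varrho_f|)^2$ termwise (using $n_j\ll k$) gives $\ll k^2$ directly. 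Your route instead passes through the distribution function: you apply the layer--cake formula to $(\log^-|f|)^2$ and feed in the sublevel--set bound $\tmop{meas}\{|f|<e^{-t}\}\ll e^{-ct/k}$, which is exactly the content of \eqref{meas small} at fixed $\varepsilon$. Both routes rest on the same two pillars (Cartan/Landau plus a reference point where $|f|$ is not too small), so the difference is packaging rather than substance; the paper's version is marginally more direct, while yours has the virtue of recycling \eqref{meas small} verbatim.

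One point to tighten: the lower bound on $|f|$ at a nearby point does \emph{not} come from the Fourier expansion (those bounds live high in the cusp, not in an arbitrary compact $\mathcal D$), nor can it simply be ``absorbed.'' You genuinely need a reference point $z_0$ in each covering disk with $y_0^{k/2}|f(z_0)|\gg e^{-Ck}$, and this is precisely hypothesis~\eqref{hypothesis} of Theorem~\ref{equi prop} at fixed scale (equivalently, QUE), invoked in the paper via Lemma~\ref{exceptional}. Once you cite that input explicitly, your argument is complete.
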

\begin{proof}

Let $c_0$ be as in Lemma~\ref{exceptional}. We take $\varepsilon$ fixed but small, $\delta=1/c_0 \cdot \varepsilon$ and $\eta \in (\exp(-c_0 \delta/\varepsilon), 1/2)$.
For each $j=1,2,\ldots,N$ (note that here $N =O( 1)$) there exists $c_j \in D_{\varepsilon}(a_j)$ 
such that $c_j \notin \mathcal S_{\eta,j}$, which by Lemma \ref{exceptional}
implies that $c_j \notin \mathcal T_{1/c_0 \cdot\varepsilon, j}$.  Thus,
$f(c_j) \gg e^{-1/c_0 \cdot \varepsilon k} (\tmop{Im}(c_j))^{-k/2}$ and $\prod_{\varrho_f \in D_{8 \varepsilon}(c_j)} |c_j-\varrho_f|
\ge (\varepsilon \eta/e)^{n_j}$.

Now apply Lemma \ref{lem:landau}
to see that for $|z-c_j| \le 2 \varepsilon$
\[
\log \bigg| \frac{f(z)}{f(c_j)}\bigg|
=\sum_{\varrho_f \in D_{4 \varepsilon}(c_j)}
\log\bigg| \frac{z-\varrho_f}{c_j-\varrho_f} \bigg|
+O(\log M_{8 \varepsilon}(c_j)).
\]
Apply Lemma \ref{max} and our earlier observations to see that
for $|z-c_j| \le 2\varepsilon$ we have
\[
\log |f(z)|=\sum_{\varrho_f \in D_{4 \varepsilon}(c_j)}
\log\big| z-\varrho_f\big|
+O(k).
\]
This implies that
\[
\int_{|z-a_j| \le  \varepsilon}
(\log |f(z)|)^2 dz \ll n_j \sum_{\varrho_f  \in D_{4 \varepsilon}(c_j)} \int_{|z-c_j| \le  2\varepsilon} (\log |z-\varrho_f|)^2 dz+ k^2 \ll k^2.
\]
Summing over all the disks we see that
\[
\iint_{\mathcal D} (\log (y^{k/2} |f(z)|))^2  \frac{dx \, dy}{y^2} \ll  k^2 \int_{\mathcal D} (\log y)^2  dz
+\int_{\mathcal D} (\log |f(z)|)^2 dz \ll k^2.
\]
\end{proof}

We are now prepared to prove Theorem \ref{equi prop}.
\begin{proof}[Proof of Theorem \ref{equi prop}]
By \eqref{zeev} it suffices to show that
\[
\bigg| \frac{1}{2\pi} \iint_{\mathcal F} \log(y^{k/2} |f(z)| )\Delta \phi(z) \frac{dx \, dy}{y^2}
\bigg| \ll k \cdot h(k) \log 1/h(k) \cdot
\iint_{\mathcal F } |\Delta \phi(z)| \frac{dx \, dy}{y^2}+k \cdot h(k)^2.
\]
Note that for $\delta> \log k/k$
\[
\bigg| \iint_{\mathcal F \setminus \mathcal T_{\delta}} \log(y^{k/2} |f(z)| ) \Delta \phi(z) \frac{dx \, dy}{y^2}
\bigg| \ll k\delta \iint_{\mathcal F } |\Delta \phi(z)| \frac{dx \, dy}{y^2}.
\]
Next, note that
\[
\begin{split}
\bigg|
\iint_{\mathcal T_{\delta}} \log(y^{k/2} |f(z)| )\Delta \phi(z) \frac{dx \, dy}{y^2} \bigg|
\le& \bigg(\iint_{\mathcal T_{\delta}} |\Delta \phi(z)|^2 \frac{dx \, dy}{y^2} \bigg)^{1/2} \cdot 
\\
& \qquad \qquad \times \bigg(
\iint_{\mathcal F} (\log(y^{k/2} |f(z)| ) )^2 \frac{dx \, dy}{y^2} \bigg)^{1/2}.
\end{split}
\]
Recall our assumption \eqref{hypothesis}, which states that for every $z_0 \in \mathcal F$ there exists a point $z_1 \in D_{h(k)}(z_0)$ with $y_1^k|f(z_1)| \gg e^{-kh(k)}$. Hence, formula \eqref{meas small} implies that, for $\varepsilon \ge h(k)$ and $\eta > \exp(-c_0 \delta/\varepsilon)$,
\[
\iint_{\mathcal T_{\delta}} |\Delta \phi(z)|^2 \frac{dx \, dy}{y^2} \ll h(k)^{-2A} \iint_{\mathcal T_{\delta} \cap \mathcal D} 1 \,  \frac{dx \, dy}{y^2}
\ll \eta^2 h(k)^{-2A}.
\]
Therefore, collecting estimates and applying Lemma \ref{sq bd} we have
\[
\frac{1}{2\pi} \iint_{\mathcal F} \log(y^{k/2} |f(z)| )\Delta \phi(z) \frac{dx \, dy}{y^2}
\ll k \delta \iint_{\mathcal F } |\Delta \phi(z)| \frac{dx \, dy}{y^2}+k \eta  \cdot h(k)^{-A}.
\]
We now take $\varepsilon= h(k)$, $\delta=((A+2) /c_0) \cdot \varepsilon \log 1/\varepsilon$
and $\eta= 2\exp(-c_0 \delta/\varepsilon)$.
\end{proof}

\section{Zeros of cusp forms high in the cusp}

To detect zeros of $f$ high in the cusp we use the following special case of a result of Ghosh and Sarnak \cite[Theorem 3.1]{GS} that shows that
for certain values of $\tmop{Im}(z)$ the Hecke cusp form
$f(z)$ is essentially determined by one term in its Fourier expansion. In this section
 we normalize $f$ so that the first term in its Fourier expansion equals one.
\begin{lemma}[Proposition 2.1 of \cite{Ma}] \label{matomaki}
There are positive constants $c_2,c_3$ and $\delta$ such that, for all
integers $\ell \in (c_2, c_3 \sqrt{k /\log k})$ and $f \in \mathcal{H}_k$
\[
\Big( \frac{e}{\ell} \Big)^{\frac{k-1}{2}} f(x+iy_{\ell})=\lambda_f(\ell)e(x\ell)+O(k^{-\delta}),
\]
where $y_{\ell}=\frac{k-1}{4\pi \ell}$.
\end{lemma}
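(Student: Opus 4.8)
The plan is to read the approximation off directly from the Fourier expansion of $f$, using that the choice $y_\ell = (k-1)/(4\pi\ell)$ makes the $\ell$-th term dominate all the others. Writing $f(z) = \sum_{n\ge 1} \lambda_f(n)\, n^{(k-1)/2} e(nz)$ (so that $\lambda_f(1)=1$ under the normalization of this section) and recalling Deligne's bound $|\lambda_f(n)| \le d(n)$, one has $2\pi n y_\ell = n(k-1)/(2\ell)$, hence the exact identity
\[
\Big(\frac{e}{\ell}\Big)^{\frac{k-1}{2}} \lambda_f(n)\, n^{\frac{k-1}{2}} e^{-2\pi n y_\ell} = \lambda_f(n)\exp\!\Big(\tfrac{k-1}{2}\, h(n/\ell)\Big), \qquad h(t):=\log t - t + 1 .
\]
The function $h$ is concave on $(0,\infty)$, satisfies $h(1)=h'(1)=0$, and is strictly negative for $t\ne 1$. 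Consequently the term $n=\ell$ contributes precisely $\lambda_f(\ell)e(x\ell)$, and it remains to show that
\[
\Big| \sum_{n\ne \ell} \lambda_f(n)\, e(nx)\exp\!\Big(\tfrac{k-1}{2}\, h(n/\ell)\Big) \Big| \le \sum_{n\ne\ell} d(n)\exp\!\Big(\tfrac{k-1}{2}\, h(n/\ell)\Big) \ll k^{-\delta}.
\]

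For the tail I would split the sum according to the size of $n/\ell$. When $|n-\ell|\le \ell/2$, Taylor's theorem at $t=1$ gives $h(t)\le -c(t-1)^2$ for an absolute $c>0$ (since $h''=-1/t^2$ is bounded away from $0$ on $[1/2,3/2]$), so $\tfrac{k-1}{2}h(n/\ell) \le -c\,k\,(n-\ell)^2/\ell^2$; bounding $d(n)\ll_\varepsilon k^\varepsilon$ and summing the resulting Gaussian, this range contributes $\ll k^\varepsilon \exp(-c\,k/\ell^2)$, and since $\ell \le c_3\sqrt{k/\log k}$ we have $k/\ell^2 \ge c_3^{-2}\log k$, so the contribution is $\ll k^{\varepsilon - c/c_3^2}$. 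When $n>3\ell/2$ the bound $h(t)\le -c_1 t$ for $t\ge 3/2$ makes the sum geometric with an enormous decay rate, so it is $\ll \exp(-c\,k)$; and when $1\le n<\ell/2$ the crude bound $h(t)\le h(1/2)<0$ gives $\ll \ell^{1+\varepsilon}\exp(-c\,k)$, again negligible. Choosing the absolute constant $c_3$ small enough that $c/c_3^2 \ge 2\delta$ and taking $\varepsilon=\delta$ yields the claimed error $O(k^{-\delta})$ for a fixed $\delta>0$.

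I do not expect a serious obstacle: the estimate is a Laplace/saddle-point bound on the Fourier series, and the only point requiring care is the dominant contribution, which comes from the two neighbouring terms $n=\ell\pm 1$. Their size is $\exp(-c\,k/\ell^2)$, and this is power-saving in $k$ precisely when $\ell \ll \sqrt{k/\log k}$ — so the upper restriction on $\ell$ in the hypothesis is exactly what the argument forces, while the lower restriction $\ell>c_2$ is inessential and $c_2$ may be taken to be any fixed constant. One should also verify that Deligne's bound is comfortably absorbed by the Gaussian decay (it is, since $k^\varepsilon$ is dwarfed by $\exp(-(c/c_3^2)\log k)$ once $c_3$ is small) and that the normalization of $f$ matches the left-hand side of the statement; both are routine.
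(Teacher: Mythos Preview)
Your argument is correct and is essentially the standard one. The paper does not actually prove this lemma: it is quoted verbatim as Proposition~2.1 of \cite{Ma}, which in turn is a refinement of Ghosh and Sarnak's Theorem~3.1 in \cite{GS}, and both references carry out precisely the Laplace/saddle-point analysis you describe --- write out the Fourier expansion, observe that the weight $n^{(k-1)/2}e^{-2\pi n y_\ell}$ peaks at $n=\ell$ with Gaussian decay governed by $h(t)=\log t - t + 1$, and bound the tail using Deligne's estimate. Your treatment of the three ranges is fine; the only place the upper bound $\ell\le c_3\sqrt{k/\log k}$ is genuinely needed is, as you note, to make the neighbouring terms $n=\ell\pm 1$ contribute $O(k^{-\delta})$, and this matches the literature.
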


This essentially tells us that 
on the vertical geodesic $\tmop{Re}(z)=0$ a sign change of $\lambda_f(\ell)$
yields a zero of $f$. More precisely, to detect a zero on $\tmop{Re}(z)=0$ it suffices to find $\ell_1$ and $\ell_2$ in $(c_2, c_3 \sqrt{k /\log k})$
such that
\[
\lambda_{f}(\ell_1) < -k^{-\epsilon} < k^{-\epsilon} <\lambda_f(\ell_2)
\]
where $\epsilon>\delta$. A similar analysis holds on the geodesic $\tmop{Re}(z)=-1/2$, but here
one also needs
 $\ell_1$ and $\ell_2$ to be odd.

\subsection{Proof of Theorem~\ref{thm:realzeravg}}
We detect sign changes for almost all forms using a very recent theorem of the last two authors \cite[Theorem 1 with $\delta = (\log h)^{-1/200}$]{MaRa}.
\begin{lemma}
\label{le:KaisaMaks}
Let $g : \mathbb{N} \rightarrow [-1,1]$ be a multiplicative function. There exists an absolute constant $C>1$ such that, for any $2 \leq h \leq X$, 
$$
\Bigg | \frac{1}{h} \sum_{x \leq n \leq x + h} g(n) - \frac{1}{X} \sum_{X \leq n \leq 2X} g(n) \Bigg | \leq (\log h)^{-1/200}
$$
for almost all $X \leq x \leq 2X$ with at most $C X (\log h)^{-1/100}$ exceptions.
\end{lemma}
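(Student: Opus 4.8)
The plan is to obtain this lemma as an immediate consequence of the main theorem of \cite{MaRa} — indeed, as the bracketed remark ``Theorem~1 with $\delta = (\log h)^{-1/200}$'' signals, Lemma~\ref{le:KaisaMaks} is nothing but the specialisation of \cite[Theorem~1]{MaRa} to one particular value of the free parameter $\delta$ occurring there. Recall that \cite[Theorem~1]{MaRa} controls, for a multiplicative $g:\mathbb{N}\to[-1,1]$ and a parameter $\delta$, the set of $x\in[X,2X]$ at which the short average $\frac1h\sum_{x\le n\le x+h}g(n)$ differs from the long average $\frac1X\sum_{X\le n\le 2X}g(n)$ by more than $\delta$, the bound on the size of that set being a power of $\delta$ times $X$ and valid once $h$ is large enough in terms of $\delta$. (If one prefers the $L^1$-mean form of \cite[Theorem~1]{MaRa}, namely that the average over $x\in[X,2X]$ of this difference is $\ll\delta$ for $h$ large in terms of $\delta$, then the exceptional-set statement follows at once by Markov's inequality, at the cost of a fixed power loss in the exponent, which is exactly why the two exponents $1/200$ and $1/100$ appear.)

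Concretely, we would proceed in three short steps. First, since $C>1$, the asserted exceptional bound $CX(\log h)^{-1/100}$ exceeds $X$ — the total number of $x$ in question — whenever $\log h\le C^{100}$; hence the conclusion is vacuous for $h$ below an absolute constant, and we may assume $h\ge h_0$, which places $h$ in the range of validity of \cite[Theorem~1]{MaRa}. Second, we apply \cite[Theorem~1]{MaRa} with $\delta=(\log h)^{-1/200}$: this is an admissible choice for $h\ge h_0$, because the lower bound on $h$ required there is at most a fixed power of $\delta^{-1}$, i.e.\ at most a fixed power of $\log h$, which holds for all large $h$; and any secondary error terms (decaying like a power of $1/h$) are absorbed into $O((\log h)^{-1/100})$. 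Third, the passage between the half-open intervals used in \cite{MaRa} and the closed intervals $[x,x+h]$, $[X,2X]$ in the statement changes each average by $O(1/h)$, respectively $O(1/X)$, which is again absorbed into the error term.

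We do not expect any genuine obstacle: the mathematical content is entirely contained in \cite[Theorem~1]{MaRa}, and what remains is bookkeeping — reconciling normalisations and interval conventions, passing from the $L^1$-mean formulation to the exceptional-set formulation if needed, and checking that the self-referential choice $\delta=(\log h)^{-1/200}$ meets the admissibility constraint relating $h$ and $\delta$ in \cite[Theorem~1]{MaRa}. All of this is routine, so the proof amounts essentially to quoting \cite[Theorem~1]{MaRa} in the quantitative form stated.
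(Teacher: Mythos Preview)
Your proposal is correct and matches the paper's approach exactly: the paper does not give a separate proof of this lemma at all, but simply records it as \cite[Theorem~1 with $\delta=(\log h)^{-1/200}$]{MaRa}, so the entire content is indeed just the quotation of that result. The bookkeeping you outline (absorbing small $h$ into the constant $C$, checking admissibility of the choice of $\delta$, reconciling interval conventions) is more than the paper itself spells out.
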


To benefit from this, we need to control the number of $n$ for which $|\lambda_f(n)| < n^{-\delta}$ and the number of $p$ for which $\lambda_f(p) < 0$. For this we quote two lemmas. The first one is an immediate consequence of~\cite[Theorem 2]{MS09}.

\begin{lemma}
\label{le:MuSi}
Let $p$ be a prime. Then
\[
\frac{\#\{f \in \mathcal{H}_k \colon |\lambda_f(p)| < p^{-\delta}\}}{\# \mathcal{H}_k} \ll p^{-\delta} + \frac{\log p}{\log k},
\]
where the implied constant is absolute and effectively computable.
\end{lemma}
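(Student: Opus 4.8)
The plan is to read this off from the effective vertical Sato--Tate law for Hecke eigenvalues proved by Murty and Sinha \cite{MS09}. For $f \in \mathcal{H}_k$ the eigenvalue $\lambda_f(p)$ is real and, by Deligne's bound, lies in $[-2,2]$; write $\lambda_f(p) = 2\cos\theta_f(p)$ with $\theta_f(p) \in [0,\pi]$. As $k \to \infty$ the eigenvalues $\lambda_f(p)$, $f \in \mathcal{H}_k$, equidistribute on $[-2,2]$ with respect to the $p$-adic Plancherel measure
\[
\mu_p \;=\; \frac{p+1}{\pi}\cdot\frac{\sqrt{1-x^2/4}}{(p^{1/2}+p^{-1/2})^2 - x^2}\;dx ,
\]
and Theorem~2 of \cite{MS09} is an effective form of this: the discrepancy between the empirical measure $\frac{1}{\#\mathcal{H}_k}\sum_{f\in\mathcal{H}_k}\delta_{\lambda_f(p)}$ and $\mu_p$ is $\ll \frac{\log p}{\log k}$, with an absolute, effectively computable implied constant. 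I would apply this to the interval $I=(-p^{-\delta},p^{-\delta})$, which gives
\[
\frac{\#\{f\in\mathcal{H}_k\colon |\lambda_f(p)|<p^{-\delta}\}}{\#\mathcal{H}_k}\;\le\;\mu_p(I)+O\!\left(\frac{\log p}{\log k}\right).
\]

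It then remains only to check that $\mu_p(I)\ll p^{-\delta}$, which is elementary. For $|x|\le 2$ and $p\ge 2$ the denominator satisfies $(p^{1/2}+p^{-1/2})^2 - x^2 \ge (p^{1/2}+p^{-1/2})^2 - 4 = p-2+p^{-1} \ge \tfrac12(p-1)$, since $(p-1)(p-2)\ge 0$; combined with $\sqrt{1-x^2/4}\le 1$ this shows that the density of $\mu_p$ with respect to Lebesgue measure is bounded above by $\frac{2(p+1)}{\pi(p-1)}$, hence by an absolute constant uniformly in $p$. Therefore $\mu_p(I)\ll |I| = 2p^{-\delta}$, and inserting this into the previous display proves the lemma.

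I do not expect a genuine obstacle here: the statement is essentially a repackaging of \cite[Theorem~2]{MS09}, and the only points requiring a moment's care are (i) recording that the error term there is uniform in $p$ of the shape $\frac{\log p}{\log k}$ in the weight-aspect, level-one family $\mathcal{H}_k$ --- this comes from feeding the Eichler--Selberg trace formula for $\operatorname{Tr}(T_{p^m},S_k)$ into the Erd\H{o}s--Tur\'an inequality and optimising the number of Chebyshev moments at roughly $\frac{\log k}{\log p}$ --- and (ii) the uniform bound on the Plancherel density near the origin used above. Effectivity of the implied constant is automatic once one uses the effective version of the trace formula, which is what \cite{MS09} do.
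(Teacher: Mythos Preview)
Your proposal is correct and matches the paper's own treatment: the paper simply records the lemma as ``an immediate consequence of \cite[Theorem~2]{MS09}'', and what you have written is precisely that deduction, applying the effective discrepancy bound to the interval $I=(-p^{-\delta},p^{-\delta})$ and bounding $\mu_p(I)$ by an absolute constant times $|I|$. Your elementary check that the Plancherel density is uniformly bounded (via $(p^{1/2}+p^{-1/2})^2-4=p-2+p^{-1}\ge\tfrac12(p-1)$) is fine and makes the ``immediate consequence'' explicit.
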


The second lemma is a large sieve
inequality for the Fourier coefficients $\lambda_f(n)$. The version we use is the following special case of a more general theorem \cite[Theorem 1]{LauWu08} due to Lau and Wu.
\begin{lemma}
\label{le:larsiev}
Let $\nu \geq 1$ be a fixed integer. Then
\[  
\sum_{f \in \mathcal{H}_k} \left| \sum_{P < p \leq Q} \frac{\lambda_f(p^{\nu})}{p}\right|^{2} \ll_\nu k \frac{1}{P \log P} + k^{10/11} \frac{Q^{\nu/5}}{(\log P)^2}
\]  
uniformly for
\[  
2 \mid k, \quad 2 \leq P < Q \leq 2P.
\]
\end{lemma}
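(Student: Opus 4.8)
The plan is to deduce Lemma~\ref{le:larsiev} directly from the general large sieve inequality of Lau and Wu \cite[Theorem 1]{LauWu08}, specialized to a suitable coefficient sequence; on our side this leaves only some bookkeeping of parameters and two elementary prime sums. Their theorem bounds $\sum_{f\in\mathcal{H}_k}\big|\sum_{n\le N}a_n\lambda_f(n)\big|^2$, for arbitrary complex $(a_n)_{n\le N}$, by a main term of size $\asymp k\sum_n|a_n|^2$ together with a secondary term of size $\asymp k^{10/11}N^{1/5}\big(\sum_n|a_n|\big)^2$ (up to harmless divisor or logarithmic factors, which are bounded for the sequences below). We apply this with $a_n=1/p$ when $n=p^\nu$ for a prime $P<p\le Q$ and $a_n=0$ otherwise, so that $\sum_{P<p\le Q}\lambda_f(p^\nu)/p=\sum_{n\le N}a_n\lambda_f(n)$ with $N=Q^\nu$.

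Estimating the two norms is then immediate. Since $Q\le 2P$ there are $\ll P/\log P$ primes in $(P,Q]$, whence $\sum_{P<p\le Q}1/p^2\ll 1/(P\log P)$ and $\sum_{P<p\le Q}1/p\ll 1/\log P$ (alternatively by Mertens' theorem and partial summation). Inserting the first bound into the main term yields $k/(P\log P)$; inserting the square of the second into the secondary term, together with $N^{1/5}=Q^{\nu/5}$, yields $k^{10/11}Q^{\nu/5}/(\log P)^2$. This gives the asserted inequality uniformly for $2\mid k$ and $2\le P<Q\le 2P$. (Over $\mathrm{SL}_2(\mathbb{Z})$ the factor $Q^{\nu/5}$ could be improved using Deligne's bound $|\lambda_f(n)|\le d(n)$, but the stated form already suffices for the application.)

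For completeness I would also recall the structure of the Lau--Wu inequality, since that is where the real work lies. Expanding the square gives $\sum_{m,n}a_m\overline{a_n}\sum_{f\in\mathcal{H}_k}\lambda_f(m)\lambda_f(n)$; the Hecke relations $\lambda_f(m)\lambda_f(n)=\sum_{d\mid(m,n)}\lambda_f(mn/d^2)$ linearize each product into a short sum, and one then invokes the Petersson trace formula. The harmonically weighted version is routine: the diagonal supplies the main term $\asymp k\sum_n|a_n|^2$, while the off-diagonal Kloosterman terms are controlled by the Weil bound together with standard $J$-Bessel estimates and are negligible in this range. The main obstacle, and the source of both the exponent $10/11$ and the factor $N^{1/5}$, is the de-harmonization: passing from the weighted average $\sum_f\omega_f(\cdots)$ (with $\omega_f$ the Petersson weights) to the natural average $\sum_{f\in\mathcal{H}_k}(\cdots)$ by writing $1=\omega_f\cdot\omega_f^{-1}$ and expanding $\omega_f^{-1}$, which is essentially $k\cdot L(1,\mathrm{sym}^2 f)$, as an approximate Dirichlet series $\asymp k\sum_{m\le M}\lambda_f(m^2)/m$ with $M$ a small power of $k$. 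Reinserting this, applying Petersson a second time, and bounding the truncation error by means of the harmonically weighted bound already established, introduces fresh Kloosterman contributions; optimizing $M$ against the length $N$ then produces the secondary term of the stated shape. In the present application both $P$ and $Q^\nu$ are only powers of $\log k$, so this secondary term is $k^{10/11+o(1)}$ and is comfortably dominated by the main term once $k$ is large.
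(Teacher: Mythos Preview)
Your proposal is correct and is exactly the approach the paper takes: the paper does not prove this lemma at all but simply states it as a special case of \cite[Theorem~1]{LauWu08}, and your specialization of that theorem with $a_n=1/p$ supported on $n=p^\nu$, $P<p\le Q$, together with the elementary bounds $\sum_{P<p\le Q}1/p^2\ll 1/(P\log P)$ and $\sum_{P<p\le Q}1/p\ll 1/\log P$, is precisely how one unpacks that citation. Your additional paragraph sketching the structure of the Lau--Wu argument is a helpful gloss but goes beyond what the paper itself supplies.
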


\begin{proof}[Proof of Theorem~\ref{thm:realzeravg}]
Let $X = k/Y$ and $\delta > 0$. Define a multiplicative function $g \colon \mathbb{N} \to \{-1, 0, 1\}$ by
\[
g(p^\nu) = 
\begin{cases}
\tmop{sgn}(\lambda_f(p^\nu)) & \text{if $|\lambda_f(p^\nu)| \geq p^{-\delta \nu}$ and $p > 2$} \\
0 & \text{otherwise}.
\end{cases}
\]
Notice that if $g(n) \neq 0$, then $n$ is odd, $|\lambda_f(n)| \geq n^{-\delta}$ and $g(n) = \tmop{sgn}(\lambda_f(n))$. Hence by Lemma~\ref{matomaki}, a sign change of $g(n)$ yields a real zero of $f$ and thus the claim follows once we have shown that, for all but at most $\varepsilon \cdot \#\mathcal{H}_k$ of $f \in \mathcal{H}_k$, we have, for $h$ large but fixed and for proportion $9/10$ of $x \sim X$,
\[
\frac{1}{h} \sum_{\substack{x \leq n \leq x+h}} |g(n)| - \frac{1}{h} \left|\sum_{\substack{x \leq n \leq x+h}} g(n) \right| \gg 1.
\]
Lemma~\ref{le:KaisaMaks} applied to $g(n)$ and $|g(n)|$ reduces this to showing that, for all but at most $\varepsilon \cdot \#\mathcal{H}_k$ of $f \in \mathcal{H}_k$, we have
\begin{equation}
\label{eq:sgndiff}
\frac{1}{X} \sum_{n \sim X} |g(n)| - \frac{1}{X} \left|\sum_{\substack{n \sim X}} g(n) \right| \gg 1.
\end{equation}

By Lemma~\ref{le:MuSi}
\[
\sum_{f \in \mathcal{H}_k} \sum_{\substack{p \leq X \\ |\lambda_f(p)| < p^{-\delta}}} \frac{1}{p} \ll \# \mathcal{H}_k \cdot \sum_{p \leq X} \left(p^{-1-\delta} + \frac{\log p}{p\log k}\right) = O(\# \mathcal{H}_k).
\]
Hence there is an absolute positive constant $C$ such that for given any $\varepsilon > 0$, 
\begin{equation}
\label{eq:nottooclose0}
\sum_{\substack{p \leq X \\ g(p) = 0}} \frac{1}{p} \leq \frac{C}{\varepsilon}
\end{equation}
for all but at most $\varepsilon/2 \cdot \#\mathcal{H}_k$ forms $f \in \mathcal{H}_k$. Consequently, with this many exceptions,
\begin{equation}
\label{eq:nonsmalllow}
\frac{1}{X} \sum_{\substack{n \sim X}} |g(n)| = \frac{1}{X} \sum_{\substack{n \sim X \\ g(n) \neq 0}} 1 \gg 1.
\end{equation}

On the other hand, since $|\lambda_f(p)| \leq 2$ for all primes $p$, for any $Q \geq P \geq 2$,
\[
\begin{split}
\sum_{\substack{P \leq p \leq Q \\ \lambda_f(p) < 0}} \frac{1}{p} 
&\geq \sum_{P \leq p \leq Q} \frac{(\lambda_f(p)^2 - 2\lambda_f(p))}{8p} = \frac{1}{8} \sum_{P \leq p \leq Q} \frac{\lambda_f(p^2) - 2 \lambda_f(p) + 1}{p},
\end{split}
\]
so that
\[
\begin{split}
\sum_{\substack{p \leq X \\ \lambda_f(p) < 0}} \frac{1}{p} &\geq  \sum_{\substack{\log X \leq p \leq X^{1/1000} \\ \lambda_f(p) < 0}} \frac{1}{p} \geq \frac{1}{8} \sum_{\log X \leq p \leq X^{1/1000}} \frac{\lambda_f(p^2) - 2 \lambda_f(p) + 1}{p} \\
& =\frac{1+o(1)}{8} \log \log X + \sum_{\log X \leq p \leq X^{1/1000}} \frac{\lambda_f(p^2) - 2 \lambda_f(p)}{p}.
\end{split}
\]
Splitting the last sum into dyadic intervals and then applying Lemma~\ref{le:larsiev} we see that it contributes $o(\log \log X)$ for almost all forms $f$. Hence, recalling~\eqref{eq:nottooclose0} and the definition of $g(n)$,
\[
\sum_{\substack{p \leq X \\ g(p) = -1}} \frac{1}{p} \geq \frac{1+o(1)}{8} \log \log X
\]
for all but $\varepsilon/2 \# \mathcal{H}_k$ forms $f \in \mathcal{H}_k$. By Halasz's theorem for real valued functions (see for instance~\cite{HT91}), this implies
\[
\frac{1}{X}\sum_{\substack{n \leq X}} g(n) = o(1).
\]
Hence~\eqref{eq:sgndiff} follows from this and \eqref{eq:nonsmalllow}, which completes the proof.
\end{proof}

\subsection{Proof of Theorem~\ref{thm:signs}}
Our main proposition for the proof of Theorem~\ref{thm:signs} shows that the Lindel\"of hypothesis implies many sign changes of $\lambda_f(\ell)$. For the remainder of this section we use the notation $x \sim X$ to mean $X \le x \le 2X$.
\begin{proposition}
\label{prop:signchanges}
Assume the Generalized Lindel\"of Hypothesis, let $\varepsilon, \eta > 0$ and $X \geq k^\eta$. Then, for almost all $x \sim X$, the interval $[x, x+X^\varepsilon]$ contains integers $m_1$ and $m_2$ such that $\lambda_f(m_1) < - X^{-\varepsilon}$ and  $\lambda_f(m_2) > X^{-\varepsilon}$. 
%
\end{proposition}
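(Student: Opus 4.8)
The plan is to deduce the proposition from two mean-square estimates for short sums of the Hecke eigenvalues over the starting point $x$. Put $H := X^\varepsilon$ and, for $x \sim X$, write
\[
S(x) := \sum_{x < n \le x+H} \lambda_f(n), \qquad T(x) := \sum_{x < n \le x+H} \lambda_f(n)^2,
\]
and set $c_f := L(1,\mathrm{sym}^2 f)/\zeta(2) > 0$. The two estimates are: \emph{(I)} there is $\kappa = \kappa(\varepsilon) > 0$ with $\int_X^{2X} |S(x)|^2\,dx \ll H^2 X^{1-\kappa}$; and \emph{(II)} $\int_X^{2X} |T(x) - c_f H|^2\,dx = o(X H^2)$ as $X \to \infty$. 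Granting these, Chebyshev's inequality shows that, outside a set of $x \sim X$ of measure $o(X)$, one has simultaneously $|S(x)| \le H X^{-\kappa'}$ for a suitable fixed $\kappa' > 0$ and $T(x) \ge \tfrac12 c_f H$; call the surviving $x$ \emph{good}.

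Suppose $x$ is good and $[x, x+H]$ contains no integer $m_2$ with $\lambda_f(m_2) > X^{-\varepsilon}$, so that $\lambda_f(n) \le X^{-\varepsilon}$ for every $n$ in the interval. Writing $\lambda_f(n) = \lambda_f(n)_+ - \lambda_f(n)_-$ with $\lambda_f(n)_\pm \ge 0$, the assumption gives $\sum_{x < n \le x+H} \lambda_f(n)_+ \le H X^{-\varepsilon}$, hence
\[
\sum_{x < n \le x+H} \lambda_f(n)_- = \sum_{x < n \le x+H}\lambda_f(n)_+ - S(x) \ll H X^{-\min(\varepsilon, \kappa')},
\]
and therefore $\sum_{x < n \le x+H} |\lambda_f(n)| \ll H X^{-\min(\varepsilon,\kappa')}$. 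But by Deligne's bound $|\lambda_f(n)| \le d(n) \ll X^{o(1)}$ for $n \le 3X$, so
\[
\sum_{x < n \le x+H} |\lambda_f(n)| \gg X^{-o(1)} \sum_{x < n \le x+H} \lambda_f(n)^2 = X^{-o(1)} T(x) \gg H X^{-o(1)},
\]
which contradicts the previous line once $X$ is large. Hence every good $x$ admits the required $m_2$; interchanging the roles of $\lambda_f(n)_+$ and $\lambda_f(n)_-$ (that is, assuming instead $\lambda_f(n) \ge -X^{-\varepsilon}$ throughout) yields an $m_1 \in [x,x+H]$ with $\lambda_f(m_1) < -X^{-\varepsilon}$. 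Intersecting the two sets of good $x$ proves the proposition, modulo \emph{(I)} and \emph{(II)}.

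Both \emph{(I)} and \emph{(II)} follow from Perron's formula together with the Generalized Lindel\"of Hypothesis. For \emph{(II)}, begin with $\sum_{x < n \le x+H} \lambda_f(n)^2 = \frac{1}{2\pi i}\int_{(2)} F(s)\frac{(x+H)^s - x^s}{s}\,ds$, where $F(s) = \sum_n \lambda_f(n)^2 n^{-s} = \zeta(s) L(s,\mathrm{sym}^2 f)/\zeta(2s)$, and move the contour to $\mathrm{Re}(s) = \tfrac12$; the simple pole of $F$ at $s = 1$ contributes exactly $c_f H$, leaving $T(x) - c_f H = \frac{1}{2\pi}\int_x^{x+H} u^{-1/2}\,\Phi(u)\,du + E(x)$ with $\Phi(u) = \int_{|t| \le T} F(\tfrac12 + it)\, u^{it}\,dt$ and $E(x)$ a Perron tail that is negligible once the sharp cutoff on $[x, x+H]$ is smoothed first. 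Cauchy--Schwarz in $u$ and then Plancherel (after the substitution $u = e^v$) bound $\int_X^{2X} |T(x) - c_f H|^2\,dx \ll H^2 \int_{|t| \le T} |F(\tfrac12 + it)|^2\,dt$, and with $T$ a suitable power of $X$, namely of size $\asymp X^{1+o(1)}/H$, this is $o(X H^2)$ as soon as $\int_{|t|\le T}|F(\tfrac12+it)|^2\,dt \ll X^{o(1)} T^{1+o(1)}$. The last bound is exactly where the hypothesis enters: GLH gives $L(\tfrac12 + it, \mathrm{sym}^2 f) \ll (k(1+|t|))^{o(1)}$, which since $X \ge k^\eta$ is $\ll X^{o(1)}(1+|t|)^{o(1)}$, while $\zeta(\tfrac12 + it) \ll (1+|t|)^{o(1)}$ and $1/\zeta(1+2it) \ll \log(2+|t|)$ are classical. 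Estimate \emph{(I)} is the same computation with $L(s,f)$ in place of $F(s)$: now there is no pole to cross, one uses only the GLH bound $L(\tfrac12 + it, f) \ll X^{o(1)}(1+|t|)^{o(1)}$, and the extra power $X^{-\kappa}$ appears because $H$ is only a small power of $X$, so the relevant range of $t$ is confined to $|t| \ll X^{1+o(1)}/H$.

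The crux is that $\lambda_f$ has analytic conductor of size $\asymp k^2$, which in the range $X = k^\eta$ with $\eta$ small is a \emph{large} power of $X$. Consequently the convexity bounds for $L(s, f)$ and $L(s, \mathrm{sym}^2 f)$ on the critical line, and even the unconditional mean-value estimates for them, are useless here: they lose positive powers of $k$, hence of $X$, and in particular the analogues of \emph{(I)} and \emph{(II)} obtained that way are worse than trivial when $\eta$ is small. It is precisely the conductor-uniform, Lindel\"of-strength bound on the critical line that makes it legitimate to push the Perron contour all the way to $\mathrm{Re}(s) = \tfrac12$ while still controlling the emerging mean value; this is the role of the hypothesis, and explains why no unconditional result of this kind is currently known in the full range $X \ge k^\eta$. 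A secondary technical point is the Perron truncation in intervals of length only $X^\varepsilon$, which is handled in the standard way by sandwiching the indicator of $[x,x+H]$ between smooth majorants and minorants before inverting.
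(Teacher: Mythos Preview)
Your approach is essentially the same as the paper's: both establish mean-square bounds on $\sum_{x<n\le x+H}\lambda_f(n)$ and $\sum_{x<n\le x+H}\lambda_f(n)^2$ via Perron's formula, Plancherel (Selberg's method), and GLH, then combine with Deligne's bound $|\lambda_f(n)|\le d(n)$ to force both signs in almost every short interval; the paper packages the two mean-square estimates as separate lemmas and concludes directly that $\sum_{\lambda_f(n)\gtrless 0}\lambda_f(n)\gg X^{9\varepsilon/10}$, while you argue by contradiction, but the content is identical. One ingredient you left implicit and should state: the step ``$X^{-o(1)}T(x)\gg H X^{-o(1)}$'' from ``$T(x)\ge\tfrac12 c_f H$'' requires the Hoffstein--Lockhart lower bound $L(1,\mathrm{sym}^2 f)\gg k^{-\epsilon}$ (which the paper invokes explicitly), since $c_f$ varies with $k$ and merely knowing $c_f>0$ does not suffice to close the contradiction or to make the Chebyshev step for $T$ give an $o(X)$ exceptional set.
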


Observe that the lower bound \eqref{first part}, the first part of
Theorem \ref{thm:signs}, immediately follows from this for $X = k/Y$ along with Lemma \ref{matomaki}. We will delay the proof of \eqref{second part} until the end of 
the section.

To prove the proposition, we study first and second moments of $\lambda_f(n)$ in short intervals. 

\begin{lemma} \label{short 1}
Assume the Generalized Lindel\"of Hypothesis. Let $\epsilon, \eta > 0$, $X\ge k^{\eta}$, and  $2 \le L \le X$. Then
\[
\bigg|
\sum_{x < n \leq x +  \frac{x}{L}} \lambda_f(n) \bigg| \ll X^{\epsilon} \Big(\frac{X}{L}\Big)^{1/2}
\]
for all $x \sim X$ with at most $X^{1-\epsilon}$ exceptions.
\end{lemma}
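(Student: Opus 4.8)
The plan is to express the short interval sum as a contour integral involving the Hecke $L$-function $L(s,f)$ and to move the line of integration, using the Generalized Lindel\"of Hypothesis to control the resulting contribution. Concretely, I would start from Perron's formula applied with a smooth weight: write
\[
\sum_{x < n \le x + x/L} \lambda_f(n) \approx \frac{1}{2\pi i} \int_{(\sigma_0)} L(s,f) \cdot \widehat{w}(s) \, ds,
\]
where $w$ is a smooth majorant/minorant for the indicator of $(x, x+x/L]$ with $\widehat w(s)$ decaying rapidly in vertical strips, and $\sigma_0 = 1 + 1/\log X$. Since $L(s,f)$ is entire, I can shift the contour all the way to $\Re(s) = 1/2$, picking up no poles (the cusp form normalization means there is no main term), and reduce to bounding $\int_{(1/2)} L(1/2 + it, f)\, \widehat{w}(1/2+it)\, dt$. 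The transform $\widehat w(1/2+it)$ has size roughly $x^{1/2}$ for $|t| \lesssim L$ and decays quickly beyond, so after applying GLH in the form $L(1/2+it, f) \ll (k(1+|t|))^{\epsilon}$, the integral is $\ll X^{\epsilon} (X/L)^{1/2}$ pointwise in $x$ — but this is too lossy by itself, since the range $|t| \le L$ gives $X^{1/2} \cdot L \cdot X^\epsilon$, not $(X/L)^{1/2}$.

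The resolution, and the reason the statement allows $X^{1-\epsilon}$ exceptional $x$, is to bring in a mean-value (almost-all) input rather than a pointwise bound. So the second step is to estimate the variance
\[
\frac{1}{X}\sum_{x \sim X} \Bigg| \sum_{x < n \le x + x/L} \lambda_f(n) \Bigg|^2,
\]
again via the integral representation, expanding the square and integrating in $x$ over $[X, 2X]$; this produces a diagonal-type main term of size $\ll X^{2\epsilon} \cdot X/L$ after using GLH to bound $L(1/2+it,f)$ on the critical line and exploiting the decay of $\widehat w$ to localize $t$ to a range of length $\asymp L$, with the $x$-averaging killing the off-diagonal oscillation (an integral of $x^{i(t_1 - t_2)}$ over $x \in [X,2X]$ contributes $\ll X / (1 + |t_1 - t_2|)$). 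Chebyshev's inequality then gives that $\big|\sum_{x < n \le x + x/L}\lambda_f(n)\big| \le X^{\epsilon}(X/L)^{1/2}$ fails for at most $X \cdot X^{2\epsilon - \epsilon'} \cdot (\text{small})$ values of $x$, which can be arranged to be $\le X^{1-\epsilon}$ after relabelling $\epsilon$.

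The main obstacle I anticipate is the bookkeeping in the variance computation: one must be careful that the smoothing at scale $x/L$ genuinely localizes the $t$-integral to $|t| \ll L X^{\epsilon}$ with acceptable tails, that transitioning between the smoothed sum and the sharp-cutoff sum costs only an acceptable error (handled by choosing majorant and minorant weights and using the trivial bound $\lambda_f(n) \ll n^\epsilon$ on the transition ranges of length $x/L \cdot X^{-\epsilon}$ or so), and that the off-diagonal terms in $\frac{1}{X}\int_X^{2X} x^{i(t_1-t_2)} \frac{dx}{x}$ really do contribute lower order after integrating against $\widehat w(1/2+it_1)\overline{\widehat w(1/2+it_2)}$ and $L(1/2+it_1,f)\overline{L(1/2+it_2,f)}$. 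Everything else — Perron, contour shift, GLH input — is routine. I would also note that this is precisely the short-interval analogue of the standard second-moment argument for $\sum_{n \le X}\lambda_f(n)$, so the structure should mirror known treatments with the weight $\widehat w$ replacing the sharp cutoff.
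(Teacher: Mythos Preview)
Your approach is essentially the same as the paper's: represent the short sum by a contour integral on $\Re(s)=\tfrac12$, bound the mean square over $x\in[X,2X]$ using GLH, and conclude by Chebyshev. Two execution details in the paper streamline what you outline. First, there is no need to smooth: the sharp Perron kernel $w_\delta(s)=(e^{s\delta}-1)/s$ with $\delta=\log(1+1/L)$ already satisfies $|w_\delta(\tfrac12+it)|\ll\min(\delta,1/|t|)$, which is all the localization you need. Second, instead of expanding the square and estimating the off-diagonal via $\int_X^{2X} x^{i(t_1-t_2)}\,dx/x$, the paper substitutes $x=e^\tau$ and applies Plancherel directly, obtaining
\[
\int_0^\infty \Big|\sum_{x<n\le x+x/L}\lambda_f(n)\Big|^2\frac{dx}{x^2}
=\frac{1}{4\pi^2}\int_{-\infty}^{\infty}|L(\tfrac12+it,f)|^2\,|w_\delta(\tfrac12+it)|^2\,dt,
\]
which under GLH is $\ll k^\varepsilon L^{-1+\varepsilon}$ immediately. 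This is Selberg's classical trick for primes in short intervals, and it removes both of the bookkeeping obstacles you flagged.
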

\begin{proof}
This follows once we have shown that for any $\varepsilon>0$
\begin{equation}
\label{eq:meansquarelambf2}
\frac{1}{X}\int_X^{2X} \bigg|\sum_{x < n \leq x + \frac{x}{L}} \lambda_f(n)\bigg|^2 dx  \ll k^{\varepsilon} \frac{X}{L^{1-\varepsilon}}.
\end{equation}
We follow an argument of Selberg \cite{Sel} on primes short intervals. 
Let $\delta=\log (1+\frac{1}{L}) \approx \frac{1}{L}$.
We get by Perron's formula that for $x, x+\frac{x}{L} \notin \mathbb Z$
\begin{equation} \notag
\begin{split}
\sum_{x < n \leq x+\frac{x}{L}} \lambda_f(n) =& \frac{1}{2 \pi i} \int_{1/2-i\infty}^{1/2+i\infty} L(s, f)  \frac{(x+\frac{x}{L})^s-x^s}{s} ds\\
=&x^{1/2} \cdot \frac{1}{2 \pi } \int_{-\infty}^{\infty} L(\tfrac12+it, f) \, w_{\delta}(\tfrac12+it) \cdot e^{it \cdot \log x} dt,
\end{split}
\end{equation}
where $w_{\delta}(s)=(e^{s\delta}-1)/s$.

Observe that $|w_{\delta}(s)| \ll \min(\delta/2 , 1/|s|)$. Thus,
making a change of variables and applying Plancherel we see that
\[
\begin{split}
&\frac{1}{X^2} \int_{X}^{2X} \left|\sum_{x < n \leq x + \frac{x}{L} } \lambda_f(n)\right|^2 dx \le \int_{0}^{\infty} \left|\sum_{x < n \leq x + \frac{x}{L} } \lambda_f(n)\right|^2 \frac{dx}{x^2} \\
&= \int_{-\infty}^{\infty} \left|\sum_{e^{\tau} < n \leq e^{\tau+\delta} } \lambda_f(n)\right|^2 \frac{d\tau}{e^{\tau}} = \frac{1}{4\pi^2} \int_{-\infty}^{\infty} |L(\tfrac12+it, f)|^2  |w_{\delta}(\tfrac12+it)|^2 \, dt \\
&\ll k^{\varepsilon} \bigg(\int_{-1/\delta}^{1/\delta}  \delta^2 |t|^{\varepsilon} \, dt
+\int_{|t|>1/\delta} \frac{1}{|t|^{2-\varepsilon}} \, dt\bigg)
\ll k^{\varepsilon} \delta^{1-\varepsilon} \ll \frac{k^{\varepsilon}}{L^{1-\varepsilon}}.
\end{split}
\]
This establishes \eqref{eq:meansquarelambf2} and the claim follows.
\end{proof}

\begin{lemma} \label{short 2}
Assume the Generalized Lindel\"of Hypothesis.  Let $\epsilon,\eta>0$,  $X \ge k^{\eta}$
and $2 \le L \le X$. Then
\[
\sum_{x < n \leq x + \frac{x}{L}} |\lambda_f(n)|^2 =\frac{6}{\pi^2} L(1, \tmop{sym}^2 f )  \cdot \frac{x}{L}
+O\Big(X^{\epsilon} \Big(\frac{X}{L}\Big)^{1/2}\Big)
\]
for all $x \sim X$ with at most $X^{1-\epsilon}$ exceptions.
\end{lemma}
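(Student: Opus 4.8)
The plan is to mirror the proof of Lemma~\ref{short 1}, but now working with $L(s, \mathrm{sym}^2 f)$ in place of $L(s,f)$, since $\sum_{n} |\lambda_f(n)|^2 n^{-s}$ is (up to a benign Euler factor and the Riemann zeta function) the symmetric square $L$-function. First I would recall the Rankin--Selberg identity: writing $D(s) := \sum_{n \geq 1} |\lambda_f(n)|^2 n^{-s}$, one has $D(s) = \zeta(s) L(s, \mathrm{sym}^2 f) / \zeta(2s)$, so that $D(s)$ is meromorphic with a simple pole at $s=1$ of residue $L(1,\mathrm{sym}^2 f)/\zeta(2) = \tfrac{6}{\pi^2} L(1, \mathrm{sym}^2 f)$. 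This residue is exactly the main term constant appearing in the statement, which confirms we are on the right track.

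Next I would apply Perron's formula to $\sum_{x < n \le x + x/L} |\lambda_f(n)|^2$. Pushing the contour past the pole at $s=1$ picks up the main term $\tfrac{6}{\pi^2} L(1, \mathrm{sym}^2 f) \cdot \tfrac{x}{L}$ (more precisely $\mathrm{Res}_{s=1} D(s) \cdot \frac{(x+x/L)^s - x^s}{s}$, whose leading behaviour is this quantity since $(x+x/L) - x = x/L$), and leaves a contour integral on $\mathrm{Re}(s) = 1/2$:
\[
\frac{x^{1/2}}{2\pi} \int_{-\infty}^{\infty} D(\tfrac12 + it)\, w_\delta(\tfrac12 + it)\, e^{it \log x}\, dt,
\]
where again $\delta = \log(1 + 1/L) \asymp 1/L$ and $w_\delta(s) = (e^{s\delta}-1)/s$ satisfies $|w_\delta(s)| \ll \min(\delta, 1/|s|)$. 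As in Lemma~\ref{short 1}, I would bound the mean square over $x \sim X$ by a change of variables and Plancherel, reducing everything to
\[
\int_{-\infty}^{\infty} |D(\tfrac12 + it)|^2\, |w_\delta(\tfrac12 + it)|^2\, dt.
\]
Here the Generalized Lindel\"of Hypothesis for $L(s, \mathrm{sym}^2 f)$ (and the classical bound for $\zeta$) gives $D(\tfrac12 + it) \ll (k(1+|t|))^\varepsilon$, so splitting the $t$-integral at $|t| = 1/\delta$ exactly as before yields $\ll k^\varepsilon \delta^{1-\varepsilon} \ll k^\varepsilon / L^{1-\varepsilon}$, hence a mean-square bound $\ll k^\varepsilon X / L^{1-\varepsilon}$ for the error term, from which the pointwise statement with at most $X^{1-\epsilon}$ exceptions follows by Chebyshev.

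The main obstacle, such as it is, is bookkeeping rather than conceptual: one must be careful that the extra Euler factors at $p = 2$ (and wherever $\lambda_f(p)^2 \ne \lambda_{f}(p^2) + 1$ deviates from the $\mathrm{sym}^2$ normalization) are absorbed into an auxiliary Dirichlet series that is bounded and holomorphic in $\mathrm{Re}(s) > 1/2 - c$, so that it does not affect the pole at $s=1$ nor the convexity/Lindel\"of bound on the critical line. One also needs to check that the pole contribution genuinely produces $\tfrac{6}{\pi^2} L(1,\mathrm{sym}^2 f) \cdot x/L$ with an error of size at most $O(x/L^2)$, which is dominated by the stated error $X^\varepsilon (X/L)^{1/2}$ whenever $L \le X$; and that the truncation errors in Perron's formula (for $x, x+x/L \notin \mathbb{Z}$) are negligible, exactly as in the previous lemma. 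With these routine points dispatched, the argument is a verbatim transcription of the proof of Lemma~\ref{short 1}.
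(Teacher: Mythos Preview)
Your proposal is correct and follows essentially the same route as the paper: identify $\sum_n \lambda_f(n)^2 n^{-s} = \zeta(s)L(s,\mathrm{sym}^2 f)/\zeta(2s)$, apply Perron, shift past the pole at $s=1$ to extract the main term $\tfrac{6}{\pi^2}L(1,\mathrm{sym}^2 f)\cdot x/L$, and bound the remaining integral in mean square via Plancherel and the Generalized Lindel\"of Hypothesis exactly as in Lemma~\ref{short 1}. One small remark: since $f$ is a level~$1$ holomorphic Hecke eigenform, the Hecke relation $\lambda_f(p)^2 = \lambda_f(p^2)+1$ holds at every prime and $\lambda_f(n)$ is real, so the Rankin--Selberg identity is exact and your concern about auxiliary Euler factors at $p=2$ is unnecessary.
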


\begin{proof}
One has
\[
\sum_{n \ge 1} \frac{\lambda_f(n)^2}{n^s} = \zeta(2s)^{-1} L(s,f \otimes f) = \frac{\zeta(s)}{\zeta(2s)} L(s,\tmop{sym}^2 f).
\] 
Writing $w_{\delta}(s)=(e^{s\delta}-1)/s$
and
arguing as in the proof of Lemma~\ref{short 1} before only now noting the pole at $s = 1$
we have that
\[
\begin{split}
&\frac{1}{X^2}\int_{X}^{2X} \left|\sum_{x < n \leq x +\frac{x}{L}} \lambda_f(n)^2 - \frac{x}{L}\tmop{Res}_{s=1} \frac{\zeta(s)L(s, \tmop{sym}^2 f)}{\zeta(2s)} \right|^2 dx \\
&\ll \int_{-\infty}^{\infty} \left|\frac{\zeta(\tfrac12+it)}{\zeta(1+2it)} L(\tfrac12, \tmop{sym}^2 f)\right|^2
|w_{\delta}(\tfrac12+it)|^2 dt 
\ll \frac{k^{\varepsilon}}{L^{1-\varepsilon}},
\end{split}
\]
and the claim follows.
\end{proof}

\begin{proof}[Proof of Proposition~\ref{prop:signchanges}]
It is shown in \cite{HL} that for any $\epsilon>0$
\[
L(1, \tmop{sym}^2 f) \gg k^{-\epsilon}.
\]
Additionally, we have Deligne's bound $\lambda_f(n) \ll n^{\epsilon}$.
Hence, by these facts along with Lemmas \ref{short 1} and \ref{short 2}, we have for almost all $x \sim X$ that
\[
\sum_{\substack{x \leq n \leq x+X^{\varepsilon} \\ \lambda_f(n) \gtrless 0}} \lambda_f(n) \gg X^{9\varepsilon/10}.
\]
The claim follows since $n$ with $|\lambda_f(n)| \leq X^{-\varepsilon}$ contribute at most $O(1)$ to the sum.
\end{proof}

\begin{proposition}
\label{prop:signchanges1/2line}
Assume the Generalized Lindel\"of Hypothesis. Let $\varepsilon, \eta > 0$ and $X \geq k^\eta$. Then, for almost all $x \sim X$, the interval $[x, x+X^\varepsilon]$ contains odd integers $m_1$ and $m_2$ such that $\lambda_f(m_1) < -X^{-\varepsilon}$ and $\lambda_f(m_2) > X^{-\varepsilon}$. 
\end{proposition}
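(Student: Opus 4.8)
The plan is to adapt the proof of Proposition \ref{prop:signchanges} by working only with odd integers, which amounts to inserting the indicator of oddness into the sums and showing it does not destroy the main terms. Concretely, for almost all $x \sim X$ I want to prove the two one-sided lower bounds
\[
\sum_{\substack{x \leq n \leq x + X^{\varepsilon} \\ n \text{ odd},\ \lambda_f(n) \gtrless 0}} \lambda_f(n) \gg X^{9\varepsilon/10},
\]
from which the statement follows exactly as before, since the odd $n$ with $|\lambda_f(n)| \leq X^{-\varepsilon}$ contribute $O(1)$. To get this, I will establish odd analogues of Lemmas \ref{short 1} and \ref{short 2}: a first-moment bound
\[
\Bigl| \sum_{\substack{x < n \leq x + x/L \\ n \text{ odd}}} \lambda_f(n) \Bigr| \ll X^{\epsilon} (X/L)^{1/2}
\]
for all $x \sim X$ with at most $X^{1-\epsilon}$ exceptions, and a second-moment asymptotic
\[
\sum_{\substack{x < n \leq x + x/L \\ n \text{ odd}}} |\lambda_f(n)|^2 = c_f \cdot \frac{x}{L} + O\bigl(X^{\epsilon}(X/L)^{1/2}\bigr)
\]
with a positive constant $c_f \gg k^{-\epsilon}$, again off an exceptional set of size $X^{1-\epsilon}$.

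For the first moment, the oddness restriction is handled by Möbius/character decomposition: writing $\mathbf 1_{n \text{ odd}} = 1 - \mathbf 1_{2 \mid n}$, the sum over odd $n$ of $\lambda_f(n)$ becomes $\sum_{n} \lambda_f(n) - \sum_{m} \lambda_f(2m)$, and since $\lambda_f$ is multiplicative with $\lambda_f(2m) = \sum_{d \mid (m,2^\infty)} (\text{stuff})$, the Dirichlet series $\sum_n \lambda_f(n) \mathbf 1_{n \text{ odd}} n^{-s}$ equals $L(s,f)$ times an Euler factor at $2$ that is bounded and bounded away from zero on $\tmop{Re}(s) = 1/2$. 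Hence the Perron-formula / Plancherel computation of Lemma \ref{short 1} goes through verbatim with $L(\tfrac12 + it, f)$ replaced by $L(\tfrac12+it,f)$ times this harmless bounded factor, and the Lindelöf-on-average bound $\int |L(\tfrac12+it,f)|^2 |t|^{\varepsilon}$ is unchanged. For the second moment, the relevant Dirichlet series is $\sum_{n \text{ odd}} \lambda_f(n)^2 n^{-s} = \frac{\zeta(s)}{\zeta(2s)} L(s, \tmop{sym}^2 f) \cdot E_2(s)$ where $E_2(s)$ is the ratio of Euler factors at $2$; this is holomorphic and nonzero at $s = 1$, so the residue computation still produces a positive main term of size $\gg k^{-\epsilon}$ (using $L(1,\tmop{sym}^2 f) \gg k^{-\epsilon}$ from \cite{HL}), and the contour-shift error is estimated as in Lemma \ref{short 2}. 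Combining the two odd-restricted moments exactly as in the proof of Proposition \ref{prop:signchanges}, with $L = X^{1-\varepsilon}$, gives the desired one-sided bounds and hence an odd $m_1$ with $\lambda_f(m_1) < -X^{-\varepsilon}$ and an odd $m_2$ with $\lambda_f(m_2) > X^{-\varepsilon}$ in $[x, x + X^{\varepsilon}]$ for almost all $x \sim X$.

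The main obstacle, and the only place where a genuinely new idea is needed over simply citing Proposition \ref{prop:signchanges}, is verifying that the extra Euler factor at $2$ in the second-moment Dirichlet series does not vanish at $s = 1$ — i.e. that restricting to odd $n$ does not kill the positive mass of $\lambda_f(n)^2$. This is clear since $\sum_{\nu \geq 0} \lambda_f(2^\nu)^2 2^{-\nu} > 0$ term by term and converges, but one should also confirm that the analogous factor stays bounded and bounded below uniformly in $f$ on the critical line so that all error estimates remain uniform in $k$; this follows from Deligne's bound $|\lambda_f(2)| \leq 2$, which makes the local factor a fixed rational function of $\lambda_f(2)$ ranging over a compact set. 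Everything else is a routine repetition of Section 3's arguments.
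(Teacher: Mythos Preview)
Your proposal is correct and follows essentially the same route as the paper's own proof: remove the local Euler factor at $p=2$ from the relevant Dirichlet series, observe that this factor is bounded (and, as you carefully note, bounded away from zero at $s=1$ uniformly in $f$ by Deligne), and rerun the arguments of Lemmas~\ref{short 1} and~\ref{short 2} verbatim. The paper's proof is in fact terser than yours, simply stating that one replaces $L(s,f)$ and $L(s,\tmop{sym}^2 f)$ by $L(s,f) \cdot L_2(s,f)^{-1}$ and $L(s,\tmop{sym}^2 f) \cdot L_2(s,\tmop{sym}^2 f)^{-1}$ and that the contribution from the local factor at $p=2$ is bounded.
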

\begin{proof}
The proof goes similarly to the proof of Proposition~\ref{prop:signchanges}. Here we
have the extra condition $(n, 2) = 1$ in the sums.  To account for this condition first note that, for $\tmop{Re}(s)>1$, $L(s,f)$
and $L(s,\tmop{sym}^2f)$ have Euler product representations given in terms of
a product of local factors at each prime. That is,
\[
L(s,f)=\prod_p L_p(s,f) \qquad \mbox{and} \qquad L(s, \tmop{sym}^2 f) =\prod_p L_p(s,\tmop{sym}^2f).
\]

The argument goes along the same lines as before, except in place of $L(s,f)$ and $L(s,\tmop{sym}^2 f)$ one uses
\[
L(s,f) \cdot (L_2(s, f))^{-1} \qquad \mbox{and} \qquad L(s,f) \cdot (L_2(s,\tmop{sym}^2 f))^{-1}.
\]
The contribution from the local factor at $p=2$ is bounded.
\end{proof}

\section{Effective QUE}
For two smooth, bounded functions $h,g$ the Petersson inner product is given by
\[
\langle h,g \rangle=\iint_{\mathcal F} h(z) \overline{g(z)} \, \frac{dx \, dy}{y^2}.
\]
Let $F_k(z) = y^{k/2} f(z)$ and assume that $F_k$ is normalized so that 
$\lVert F_k \rVert^2:=\langle F_k, F_k\rangle=1$.
In this section we establish QUE with an unconditional, effective error term. 
Under the assumption of the Generalized Lindel\"of Hypothesis effective error
terms have been obtained by Watson \cite{W} and Young \cite{Y}.  
For the unconditional result our arguments essentially follow those of Holowinsky and Soundararajan~\cite{H, S, HS}, except for one modification which we have borrowed from Iwaniec's course notes on QUE. We have also used some ideas of Matt Young \cite{Y} and the final optimization uses a trick from Iwaniec's course notes on QUE.

As in Holowinsky's and Soundararajan's \cite{HS} proof of QUE, we shall estimate the inner product $\langle |F_k|^2, \phi \rangle$ of $|F_k|^2$ with a smooth function $\phi$ in two ways. In the first way, based on Soundararajan's approach~\cite{S}, we use the spectral decomposition of $\phi$ and bounds for inner products, the most involved case being the inner product of $|F_k|^2$ with Maass forms. Here a formula of Watson \cite{W} for $\langle |F_k|^2, u \rangle$, where $u$ is a Maass form, plays a crucial role. In the second way, based on Holowinsky's approach~\cite{H}, we compute the inner product $\langle |F_k|^2, \phi \rangle$ using a smoothed incomplete Eisenstein series and bounds for the shifted convolution problem. Each approach alone fails if the Fourier coefficients of $F_k$ misbehave in a certain way, but as noticed in~\cite{HS}, in the two approaches, the misbehavior is of different nature, and one of the approaches always works.

\subsection{Soundararajan's approach}
\label{ss:Sound}
The following treatment of the inner product of $|F_k|^2$ with a cusp form
is taken from Iwaniec's notes on QUE. 
\begin{lemma} \label{lem:Iwaniec}
Let $u_j$ be an $L^2$-normalized Hecke-Maass cusp form with spectral parameter $t_j$ with $|t_j| \le k$. Then,
$$
|\langle |F_k|^2, u_j \rangle | \ll |t_j|^{1/2+\varepsilon} (\log k)^{\varepsilon} \prod_{p \leq k} \Big ( 1 - \frac{n(p)}{p} \Big ) 
$$
where $n(p) = \lambda_f(p^2) + \tfrac 14 \cdot (1 - \lambda^2_f(p^2) )$. 
\end{lemma}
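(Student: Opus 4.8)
The plan is to relate the inner product $\langle |F_k|^2, u_j\rangle$ to a triple product $L$-function via Watson's formula and then bound the arithmetic factor. Concretely, by Watson's formula
\[
|\langle |F_k|^2, u_j\rangle|^2 = \frac{\Lambda(\tfrac12, \mathrm{sym}^2 f \otimes u_j)}{\Lambda(1,\mathrm{sym}^2 f)^2 \Lambda(1,\mathrm{sym}^2 u_j)} \cdot \Gamma\text{-factors},
\]
which, after using the convexity bound (or subconvexity, though convexity suffices here) for $L(\tfrac12, \mathrm{sym}^2 f\otimes u_j)$ together with the lower bound $L(1,\mathrm{sym}^2 f)\gg k^{-\varepsilon}$ of Hoffstein--Lockhart cited earlier, would give a bound of the shape $|t_j|^{1/2+\varepsilon} k^{\varepsilon}$ but \emph{without} the crucial saving $\prod_{p\le k}(1 - n(p)/p)$. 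That extra product is exactly the mechanism of Soundararajan's ``weak subconvexity''—one does not merely use the convexity bound, but instead observes that the Dirichlet series coefficients of $L(s,\mathrm{sym}^2 f\otimes u_j)$ are supported on, roughly, $\lambda_f(p^2)^2$-type quantities, so that the associated mean value over primes is governed by $n(p) = \lambda_f(p^2) + \tfrac14(1-\lambda_f(p^2)^2)$, and the $L$-value is smaller than convexity by a factor $\prod_{p\le k}(1 - n(p)/p)$.

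The steps I would carry out, in order, are: (i) Invoke Watson's explicit formula expressing $\langle |F_k|^2, u_j\rangle$ in terms of the central value of the degree-$8$ $L$-function $L(\tfrac12, f\otimes f\otimes u_j) = L(\tfrac12, \mathrm{sym}^2 f\otimes u_j)\, L(\tfrac12, u_j)$ (the $L(\tfrac12,u_j)$ factor being harmless), divided by $L(1,\mathrm{sym}^2 f)$ and the norm of $u_j$, and carefully track the archimedean factors to extract the $|t_j|^{1/2+\varepsilon}$. (ii) Reduce to bounding $L(\tfrac12, \mathrm{sym}^2 f\otimes u_j)$. (iii) Apply the weak-subconvexity / amplification-free argument of Soundararajan: express $L(\tfrac12,\mathrm{sym}^2 f\otimes u_j)$ via an approximate functional equation, and bound the relevant sum over $n\le k^{1+\varepsilon}$ using the fact that on average the $n$-th coefficient behaves like the coefficient of the Rankin--Selberg convolution, whose prime-$p$ ``density'' is $n(p)$; this is where the key identity $n(p) = \lambda_f(p^2) + \tfrac14(1-\lambda_f(p^2)^2)$ enters, coming from combining the local factor of $L(s,\mathrm{sym}^2 f\otimes u_j)$ at $p$ (involving $\lambda_f(p^2)$ and $\lambda_{u_j}(p)$) and averaging trivially over $u_j$'s coefficients. (iv) Use $L(1,\mathrm{sym}^2 f)\gg k^{-\varepsilon}$ to clear the denominator, and absorb $\log k$ powers.

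The main obstacle I anticipate is step (iii): getting the product $\prod_{p\le k}(1 - n(p)/p)$ rather than a crude convexity bound requires the Holowinsky--Soundararajan-style quantitative version of Soundararajan's weak subconvexity theorem, applied to the specific $L$-function $L(s,\mathrm{sym}^2 f\otimes u_j)$, with the $t_j$-dependence made explicit and uniform. One must verify that this $L$-function satisfies the hypotheses of the weak-subconvexity machinery (Ramanujan on average, bounded conductor exponent, correct functional equation) and that the ``arithmetic factor'' output of that machinery is precisely $\prod_{p}(1 - \lambda_{f\otimes f}(p^2)/p + \cdots)$, which after the trivial averaging over $u_j$ simplifies to $\prod_{p\le k}(1 - n(p)/p)$. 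Tracking the archimedean factors in Watson's formula to get the clean $|t_j|^{1/2+\varepsilon}$ rather than something worse in $t_j$ is a secondary technical point, but Stirling's formula applied to the gamma factors of the degree-$8$ functional equation handles it. Since the paper says it follows Iwaniec's notes, I expect the write-up to cite Watson and the weak-subconvexity input and then perform the local computation of $n(p)$ explicitly.
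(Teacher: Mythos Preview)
Your overall framework (Watson's formula, factorization $L(\tfrac12, f\times f\times u_j)=L(\tfrac12,\mathrm{sym}^2 f\otimes u_j)L(\tfrac12,u_j)$, approximate functional equation) matches the paper, but you misidentify where the Euler product $\prod_{p\le k}(1-n(p)/p)$ comes from, and your step (iv) as written would not produce it.

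The paper does \emph{not} use Soundararajan's weak subconvexity on $L(\tfrac12,\mathrm{sym}^2 f\otimes u_j)$; weak subconvexity only saves a factor $(\log k)^{1-\varepsilon}$ over convexity, not an arithmetic Euler product. Instead, the paper applies Cauchy--Schwarz to the approximate functional equation:
\[
|L(\tfrac12,u_j\times\mathrm{sym}^2 f)|^2 \ll \Big(\sum_n \frac{|\lambda_{u_j}(n)|^2}{\sqrt n}|V|\Big)\Big(\sum_n \frac{\lambda_f(n^2)^2}{\sqrt n}|V|\Big).
\]
The first factor is handled by Rankin--Selberg for $u_j$ and gives essentially $\mathfrak{C}^{1/4}L(1,\mathrm{sym}^2 u_j)$; the second is bounded by multiplicative-function methods as $\mathfrak{C}^{1/4}\prod_{p}(1+(\lambda_f(p^2)^2-1)/p)$. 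Taking the square root contributes $\prod_p(1+\tfrac14(\lambda_f(p^2)^2-1)/p)$ --- this is the source of the $\tfrac14(1-\lambda_f(p^2)^2)$ piece of $n(p)$.

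The $\lambda_f(p^2)$ piece of $n(p)$ does \emph{not} come from the central $L$-value at all: it comes from replacing your proposed bound $L(1,\mathrm{sym}^2 f)\gg k^{-\varepsilon}$ by the much more precise Lemma~2 of Holowinsky--Soundararajan,
\[
L(1,\mathrm{sym}^2 f)^{-1}\ll (\log\log k)^3\prod_{p\le k}\Big(1-\frac{\lambda_f(p^2)}{p}\Big).
\]
Multiplying the two Euler products together yields $\prod_{p\le k}(1-n(p)/p)$. If you only use $L(1,\mathrm{sym}^2 f)\gg k^{-\varepsilon}$ as in your step (iv), the $\lambda_f(p^2)$ term never appears and the stated bound is unreachable by your outline.
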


\begin{proof} 
By Watson's formula \cite{W}
$$
|\langle u_j F_k, F_k \rangle |^2 \ll  \frac{\Lambda(\tfrac 12, u_j \times f \times f)}{\Lambda(1, \text{sym}^2 u_j) \Lambda(1, \text{sym}^2 f)^2}  .
$$
The ratio of the Gamma factors is $\ll 1/k$, and therefore 
$$
| \langle u_j F_k , F_k \rangle | \ll \frac{|L(\tfrac 12, u_j \times \text{sym}^2 f)|^{1/2} \cdot |L(\tfrac 12, u_j)|^{1/2}}{\sqrt{k} |L(1, \text{sym}^2 f)| \cdot |L(1, \text{sym}^2 u_j)|^{1/2}}.
$$
For the $L$-functions depending only on $u_j$ we note that
the convexity bound gives $|L(\tfrac 12, u_j)| \ll t_j^{1/2+\varepsilon}$, 
while the work of Hoffstein and Lockhart \cite{HL} implies that $t_j^{-\varepsilon} \ll |L(1, \text{sym}^2 u_j)|$.
Next we note that
Lemma 2 of Holowinsky and Soundararajan \cite{HS} implies 
$$
|L(1, \text{sym}^2 f)|^{-1} \ll (\log\log k)^3 \prod_{p \leq k} \Big ( 1 - \frac{\lambda_f(p^2)}{p} \Big ).
$$ 
Therefore,
\begin{equation} \label{eq:watsonconvex}
|\langle u_j F_k, F_k \rangle| \ll (\log\log k)^3 \cdot \frac{t_j^{1/4 + \varepsilon}}{\sqrt{k}} \prod_{p \leq k} \Big ( 1 - \frac{\lambda_f(p^2)}{p}
\Big ) \cdot |L(\tfrac 12, u_j \times \text{sym}^2 f)|^{1/2}. 
\end{equation}

It suffices to bound the remaining $L$-function $L(\tfrac 12, u_j \times \text{sym}^2 f)$.
The analytic conductor $\mathfrak{C}$ of $L(\tfrac 12, u_j \times \text{sym}^2 f)$ satisfies $\mathfrak{C} \asymp (k + |t_j|)^4 \cdot |t_j|^2$.  
Therefore, by the approximate functional equation (see for instance Theorem 2.1 of Harcos \cite{Harcos}), and then Cauchy-Schwarz, 
\begin{equation} \label{eq:approxfun}
\begin{split}
|L(\tfrac 12, u_j \times \text{sym}^2 f)|^2 & \ll \Big ( \sum_{n \ge 1}
\frac{|\lambda_{u_j}(n) \lambda_f(n^2)|}{\sqrt{n}} \cdot \Big| V\Big( \frac{n}{\sqrt{\mathfrak C}}\Big) \Big| \Big )^2 \\
& \ll \sum_{n \ge 1} \frac{|\lambda_{u_j}(n)|^2}{\sqrt{n}}  \cdot \Big| V\Big( \frac{n}{\sqrt{\mathfrak C}}\Big)\Big| \times \sum_{n \ge 1}
\frac{\lambda_f(n^2)^2}{\sqrt{n}} \cdot \Big| V\Big( \frac{n}{\sqrt{\mathfrak C}}\Big)\Big| ,
\end{split}
\end{equation}
where $V$ is a smooth function satisfying $|V(x)| \ll_A \min(1, x^{-A})$ for any $A \ge 1$.
To bound the second term in \eqref{eq:approxfun} we 
use general bounds for multiplicative functions to see
\begin{equation} \label{eq:mult}
\sum_{n \leq \mathfrak{C}^{1/2} (\log \mathfrak C)^{\varepsilon} } \frac{\lambda_f(n^2)^2}{\sqrt{n}} \ll \mathfrak{C}^{1/4} (\log \mathfrak C)^{\varepsilon}
\prod_{p \leq \mathfrak{C}^{1/2} (\log \mathfrak C)^{\varepsilon}}
\Big ( 1 + \frac{\lambda_f(p^2)^2 - 1}{p} \Big ).
\end{equation}
Next we use Deligne's bound $|\lambda_f(n)| \le d(n)$, the elementary
estimate $\sum_{n \le X} d^2(n^2) \ll X(\log X)^8$, and partial summation to see that
for any $A \ge 1$ that
\begin{equation} \notag
\sum_{n \ge \mathfrak{C}^{1/2} (\log \mathfrak C)^{\varepsilon}} \frac{\lambda_f(n^2)^2}{\sqrt{n}} \cdot \Big| V\Big( \frac{n}{\sqrt{\mathfrak C}}\Big)\Big| \ll_A \frac{\mathfrak{C}^{1/4}}{(\log \mathfrak C)^A},
\end{equation}
which is bounded above by the right-hand side of \eqref{eq:mult}.

Next
observe that for $X \ge 2$
\begin{align}
 \sum_{n \ge 1} \frac{|\lambda_{u_j}(n)|^2}{\sqrt{n}} \cdot e^{-n/X}  \label{eq:shift}
 = \frac{1}{2\pi i} \int_{(2)} \frac{L(\tfrac 12 + s, u_j \otimes u_j)}{\zeta(2s+1)} \cdot \Gamma(s) X^{s} ds.
\end{align}
The convexity bound gives
$$
|L(\tfrac 12 + it, u_j \otimes u_j)| \ll |t_j|^{1/2+\varepsilon} \cdot (1 + |t|)^{1+\varepsilon}.
$$
By convexity we also have $|L(\sigma + it, u_j \otimes u_j)| \ll |t_j|^{1/2+\varepsilon} \cdot (1 + |t|)^{1+\varepsilon}$ uniformly in $\sigma \geq \tfrac 12$. 
In addition, from the works
Hoffstein and Lockhart \cite{HL} and Li \cite{XiLi} we have $|t_j|^{-\varepsilon} \ll L(1, \text{sym}^2 u_j) \ll |t_j|^{\varepsilon}$. Combining these ingredients it follows that (\ref{eq:shift}) equals
$$
\frac{6}{\pi^{3/2}} X^{1/2} L(1, \text{sym}^2 \ u_j) + O \Big( X^{\varepsilon} \cdot |t_j|^{1/2+\varepsilon} \Big).
$$
Using this and partial summation it follows
that the first term on the right-hand side of \eqref{eq:approxfun}
is $\ll \mathfrak{C}^{1/4} L(1, \text{sym}^2 u_j) +\mathfrak{C}^{\varepsilon} |t_j|^{1/2+\varepsilon}$.
Thus, applying this bound along with  \eqref{eq:mult} in \eqref{eq:approxfun} 
yields
\begin{equation} \notag
\begin{split}
& |L(\tfrac 12, u_j \times \text{sym}^2 f)|^{1/2} \\
&\ll  (\log \mathfrak{C})^{\varepsilon} \Big(\prod_{p \leq \mathfrak{C}^{1/2} (\log \mathfrak C)^{\varepsilon}} \Big ( 1 + \frac{\lambda_f(p^2)^2 - 1}{4 p} \Big ) \Big)  \Big ( \mathfrak{C}^{1/8} |t_j|^{\varepsilon} + \mathfrak{C}^{1/16+\varepsilon} |t_j|^{1/8+\varepsilon} \Big ).
\end{split}
\end{equation}
Using this in \eqref{eq:watsonconvex}, doing some minor manipulations in the Euler products,
and simplifying error terms we have that
$$
|\langle u_j F_k, F_k \rangle |\ll |t_j|^{1/2+\varepsilon} (\log k)^{\varepsilon} \prod_{p \leq k} \Big ( 1 - \frac{n(p)}{p} \Big ) 
$$
as claimed.

\end{proof}
We will also require a bound for the inner products of Maass cusp forms $u_j$ and Eisenstein series $E(\cdot, \tfrac 12 + it)$ with a smooth function $\phi$.  First we require a bound
for the Eisenstein series $E(z,s) =\sum_{\gamma \in \Gamma_{\infty} \backslash \Gamma} (\tmop{Im}(\gamma z))^s$ uniform in both $z$ and $s$.
The Eisenstein series has the Fourier expansion (see equation (3.29) of \cite{IwaniecSpectral}) 
\begin{equation} \label{eq:eisen}
E(z,s)=y^{s}+\frac{\theta(1-s)}{\theta(s)} y^{1-s}
+\frac{2 \sqrt{y}}{\theta(s)} \sum_{n \neq 0} \tau_{s-\frac12}(n) e(nx) K_{s-\frac12}(2\pi |n| y),
\end{equation}
where
$
\theta(s)=\pi^{-s} \Gamma(s) \zeta(2s)$ and
$\tau_{s-\frac12}(n)=\sum_{ab=|n|} (\frac{a}{b})^{s-\frac12}$. Using the following uniform estimates for the $K$-Bessel function
due to Balogh \cite{Balogh} (see Corollary 3.2 of Ghosh, Reznikov, and Sarnak \cite{GRS13})
\begin{equation} \label{eq:kbessel}
K_{it}(u) \ll \min\left( (t^2-u^2)^{-1/4} e^{-\frac{\pi}{2}t}, u^{-1/2} e^{-u}, t^{-1/3} e^{-\frac{\pi}{2}t} \right)
\end{equation}
along with Stirling's formula and the bound $|\zeta(1+it)|^{-1} \ll \log (|t|+1) $ one has
\begin{equation} \label{eq:eisenbd}
E(z,\tfrac12+it) \ll \sqrt{y}(1+|t|).
\end{equation}
For a more complete argument and a better bound see Lemma 2.1 of Young \cite{Y2}.

\begin{lemma} \label{lem:IBP}
Let $\phi$ be a smooth function
with support contained within $\{z  : 1/2 \le \tmop{Im}(z) \le C\}$
with $C>1$.
Also, suppose $\phi$ satisfies $\Delta^{\ell} \phi \ll (CM)^{2\ell}$ for all $\ell \geq 1$.  Then
$$
|\langle u_j , \phi \rangle| \ll_{A} \frac{(CM)^{2A}}{1 + |t_j|^{2A}}  \quad \text{ and } \quad
|\langle E(\cdot, \tfrac 12 + it), \phi \rangle| \ll_{A}  \frac{(CM)^{2A}}{1 + |t|^{2A-1}}
$$
for all $A \geq 1$.
\end{lemma}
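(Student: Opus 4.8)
The plan is to exploit that both $u_j$ and $E(\cdot,\tfrac12+it)$ are Laplace eigenfunctions, $\Delta u_j=(\tfrac14+t_j^2)u_j$ and $\Delta E(\cdot,\tfrac12+it)=(\tfrac14+t^2)E(\cdot,\tfrac12+it)$, and to shift powers of $\Delta$ onto $\phi$ by repeated integration by parts. Since $\phi$ is supported in $\{z:1/2\le\tmop{Im}(z)\le C\}$ it is a smooth \emph{compactly supported} function on $\tmop{SL}_2(\mathbb Z)\backslash\mathbb H$, vanishing near the cusp; hence Green's identity gives $\langle\Delta^{\ell}g,\phi\rangle=\langle g,\Delta^{\ell}\phi\rangle$ for $g=u_j$ or $g=E(\cdot,\tfrac12+it)$ and every integer $\ell\ge 0$, the side-pairing boundary terms cancelling by automorphy and the cusp contributing nothing because $\phi$ and all its $\Delta$-derivatives vanish there (this also renders the pairing with the non-$L^2$ series $E$ well defined). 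Consequently
\[
(\tfrac14+t_j^2)^{\ell}\langle u_j,\phi\rangle=\langle u_j,\Delta^{\ell}\phi\rangle
\quad\text{and}\quad
(\tfrac14+t^2)^{\ell}\langle E(\cdot,\tfrac12+it),\phi\rangle=\langle E(\cdot,\tfrac12+it),\Delta^{\ell}\phi\rangle.
\]

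For the Maass form I would bound the right-hand side by Cauchy--Schwarz, using $\lVert u_j\rVert_2=1$, the hypothesis $\Delta^{\ell}\phi\ll(CM)^{2\ell}$, and the boundedness of the hyperbolic area of $\tmop{supp}\,\phi$, getting $|\langle u_j,\Delta^{\ell}\phi\rangle|\ll(CM)^{2\ell}$ and hence $|\langle u_j,\phi\rangle|\ll(CM)^{2\ell}(\tfrac14+t_j^2)^{-\ell}$. Taking $\ell=A$, using that on the full modular group the smallest positive eigenvalue exceeds $\tfrac14$ (so every $t_j$ is real and $\tfrac14+t_j^2\asymp 1+|t_j|^2$), and appending the trivial bound $|\langle u_j,\phi\rangle|\ll 1$ to handle bounded $t_j$, yields the first estimate. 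For the Eisenstein series, not square-integrable, I would instead write $|\langle E(\cdot,\tfrac12+it),\Delta^{\ell}\phi\rangle|\le\bigl(\sup_{z\in\tmop{supp}\,\phi}|E(z,\tfrac12+it)|\bigr)\iint_{\tmop{supp}\,\phi}|\Delta^{\ell}\phi|\,\tfrac{dx\,dy}{y^2}$, bound the integral by $\ll(CM)^{2\ell}$ as before, and bound the supremum via \eqref{eq:eisenbd} by $\ll\sqrt{C}\,(1+|t|)$ on the support; with $\ell=A$ this gives $|\langle E(\cdot,\tfrac12+it),\phi\rangle|\ll\sqrt{C}\,(1+|t|)(CM)^{2A}(\tfrac14+t^2)^{-A}$, and since $(\tfrac14+t^2)^{-A}(1+|t|)\asymp(1+|t|^{2A-1})^{-1}$, while $\sqrt C$ is absorbed into the implied constant ($C$ being fixed), the trivial bound for bounded $|t|$ completes the second estimate.

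The steps are all routine; the one point worth flagging is the integration by parts against $E(\cdot,\tfrac12+it)$, whose polynomial growth toward the cusp is killed by the compactly supported $\phi$, so that no boundary terms arise. I would also record at the start that $CM\ge 1$ holds throughout the applications ($C$ fixed, $M\to\infty$), so that the trivial bounds are dominated by $(CM)^{2A}$.
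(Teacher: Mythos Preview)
Your approach is exactly the paper's: move powers of $\Delta$ from the eigenfunction onto $\phi$ via self-adjointness, then bound the resulting integral. For the Maass form case your Cauchy--Schwarz step is equivalent to the paper's use of $\langle |u_j|,1\rangle\le\|u_j\|_2$ via finite area.

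There is one small slip in the Eisenstein case. You pull out $\sup_{\tmop{supp}\phi}|E(z,\tfrac12+it)|\ll\sqrt{C}\,(1+|t|)$ and then dismiss the $\sqrt{C}$ by asserting ``$C$ being fixed''; but $C$ is \emph{not} fixed in the applications (in the proof of Theorem~\ref{effective que} it grows like $(\log k)^{\eta_1}$), so this justification is wrong. The paper avoids the extra $\sqrt{C}$ altogether by not taking the supremum: it inserts the pointwise bound $|E(z,\tfrac12+it)|\ll\sqrt{y}\,(1+|t|)$ into the integral and uses that $\int_{\mathcal F}y^{-3/2}\,dx\,dy<\infty$ uniformly in $C$. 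With this one-line change your argument matches the paper's and gives the stated bound without any stray $C$-dependence.
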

\begin{proof}
The hyperbolic Laplacian is symmetric with respect to the Petersson inner product, that is, $\langle \Delta g, h \rangle = \langle g, \Delta h \rangle$. Therefore since $u_j$ is an eigenfunction of $\Delta$ with eigenvalue $\tfrac 14 + t_j^2$, we get
$$
 (\tfrac 14 + t_j^2)^{\ell} \langle u_j, \phi \rangle = 
\langle \Delta^{\ell} u_j, \phi \rangle = \langle u_j, \Delta^{\ell} \phi \rangle
\ll \langle |u_j|, 1  \rangle \cdot (CM)^{2\ell} .
$$
Since $\mathcal F$ has finite hyperbolic area we can bound the $L^1$-norm of $u_j$ by its $L^2$-norm, which is one. This gives the first claim. For the second
claim we proceed similarly except now in the last step we use \eqref{eq:eisenbd}, finding that
\begin{align*}
(\tfrac 14 + t^2)^{\ell} \langle E(\cdot, \tfrac 12 + it), \phi \rangle 
= \langle E(\cdot, \tfrac 12 + it), \Delta^{\ell} \phi \rangle 
\ll (CM)^{2\ell} (1+|t|) \int_{\mathcal F} \frac{dx dy}{y^{3/2}}.
\end{align*}
\end{proof}

We now derive estimates for $\langle |F_k|^2, \phi \rangle$ in two different ways.
The first approach, below, is obtained using the spectral decomposition, and follows Soundararajan's paper~\cite{S}, except for the use of Lemma \ref{lem:Iwaniec}.
\begin{lemma} \label{lem:spectral}
Let $\phi$ be as in Lemma \ref{lem:IBP}.
If $f$ is a Hecke cusp form of weight $k$ then
\begin{align*}
\langle |F_k|^2, \phi \rangle =&\frac{3}{\pi} \cdot  \langle 1 , \phi \rangle
+ O \Big ( (CM)^{3/2+\varepsilon} (\log k)^{\varepsilon} \cdot \Big ( \prod_{p \leq k} \Big ( 
1 - \frac{n(p)}{p} \Big ) +  \prod_{p \leq k} 
\Big ( 1 - \frac{\lambda_f(p^2)+1}{p} \Big ) \Big ) \| \phi \|_2 \Big )  \\ & + O_{A}( (\log k)^{-A})
\end{align*}
and where $n(p) = \lambda_f(p^2) + \tfrac 14 \cdot (1 - \lambda_f(p^2)^2)$.
\end{lemma}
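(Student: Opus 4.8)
The plan is to expand $\phi$ in the spectral decomposition of $L^2(\mathcal{F})$ and pair it term by term with $|F_k|^2$. With $\{u_j\}$ an orthonormal basis of Hecke--Maass cusp forms with spectral parameters $t_j$ one has
\[
\phi = \frac{\langle \phi, 1\rangle}{\langle 1,1\rangle} + \sum_j \langle \phi, u_j\rangle u_j + \frac{1}{4\pi}\int_{-\infty}^{\infty} \langle \phi, E(\cdot, \tfrac12+it)\rangle\, E(\cdot, \tfrac12+it)\, dt .
\]
Taking the Petersson inner product with $|F_k|^2$, the constant term contributes $\langle |F_k|^2, 1\rangle \langle 1,\phi\rangle/\langle 1,1\rangle = \tfrac{3}{\pi}\langle 1,\phi\rangle$, since $\langle |F_k|^2,1\rangle = \lVert F_k\rVert^2 = 1$ and $\langle 1,1\rangle = \tmop{Area}_{\mathbb H}(\mathcal{F}) = \pi/3$. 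It remains to bound the cuspidal sum $\sum_j \langle |F_k|^2, u_j\rangle\overline{\langle \phi, u_j\rangle}$ and the continuous contribution $\tfrac{1}{4\pi}\int \langle |F_k|^2, E(\cdot,\tfrac12+it)\rangle\overline{\langle \phi, E(\cdot,\tfrac12+it)\rangle}\, dt$.

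For the cuspidal sum, Lemma~\ref{lem:IBP} supplies arbitrarily fast polynomial decay $\langle \phi, u_j\rangle \ll_A (CM)^{2A}(1+|t_j|)^{-2A}$, so the sum is effectively restricted to $|t_j| \ll CM$. There I would apply Cauchy--Schwarz, bound $\sum_j |\langle \phi, u_j\rangle|^2 \le \lVert \phi\rVert_2^2$ by Bessel's inequality, insert $|\langle |F_k|^2, u_j\rangle| \ll |t_j|^{1/2+\varepsilon}(\log k)^\varepsilon \prod_{p\le k}(1-n(p)/p)$ from Lemma~\ref{lem:Iwaniec}, and carry out the remaining sum over $t_j$ using Weyl's law $\#\{j : |t_j|\le T\} \ll T^2$; this produces the term $(CM)^{3/2+\varepsilon}(\log k)^\varepsilon \prod_{p\le k}(1-n(p)/p)\lVert\phi\rVert_2$. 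For the range $|t_j| > k$, where Lemma~\ref{lem:Iwaniec} is not available, I would use only the crude bound $\sum_j |\langle |F_k|^2, u_j\rangle|^2 \le \lVert\,|F_k|^2\rVert_2^2 \ll k^{O(1)}$ (from a sup-norm estimate $\sup_z y^k|f(z)|^2 \ll k^{O(1)}$, e.g.\ Rudnick's Proposition~A.1) together with Lemma~\ref{lem:IBP} with $A$ large; since $CM$ is at most a fixed power of $\log k$ in all the applications, this tail is absorbed into the $O_A((\log k)^{-A})$ error.

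The continuous part is handled in the same spirit, with Lemma~\ref{lem:IBP} again giving rapid decay of $\langle \phi, E(\cdot,\tfrac12+it)\rangle$ in $t$, but now $\langle |F_k|^2, E(\cdot,s)\rangle$ must be computed: unfolding the Eisenstein series gives a Rankin--Selberg integral proportional to $\Gamma(k+s-1)(4\pi)^{-(k+s-1)}\,\zeta(s)L(s,\tmop{sym}^2 f)/\zeta(2s)$, and matching residues at $s=1$ (where $\tmop{Res}_{s=1}E(\cdot,s)$ is constant and $\langle |F_k|^2, \tmop{Res}_{s=1}E\rangle$ is fixed by $\lVert F_k\rVert^2=1$) pins down the overall normalising constant, introducing a factor $L(1,\tmop{sym}^2 f)^{-1}$. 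Setting $s = \tfrac12+it$, estimating the Gamma ratio by Stirling as $\ll k^{-1/2+\varepsilon}$ for $|t|\ll(\log k)^{O(1)}$, and using $|\zeta(\tfrac12+it)|\ll(1+|t|)^{1/4}$ and $|\zeta(1+2it)|^{-1}\ll\log(2+|t|)$, one is reduced to bounding $|L(\tfrac12+it,\tmop{sym}^2 f)|/L(1,\tmop{sym}^2 f)$: the numerator via the approximate functional equation and elementary mean-value bounds for $|\lambda_{\tmop{sym}^2 f}(n)|$, and the denominator from below by $L(1,\tmop{sym}^2 f)\gg(\log\log k)^{-3}\prod_{p\le k}(1-\lambda_f(p^2)/p)^{-1}$ (the ingredient already used in the proof of Lemma~\ref{lem:Iwaniec}), whereupon the local factors collapse into $\prod_{p\le k}(1-(\lambda_f(p^2)+1)/p)$; integrating this against the decaying factor finishes the bound. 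I expect the main obstacle to be exactly this last estimate for the continuous spectrum: getting the right bound for $L(\tfrac12+it,\tmop{sym}^2 f)$ uniformly in $t$ and checking that, after combining it with $1/L(1,\tmop{sym}^2 f)$ and the Euler factors of $\zeta$, the surviving product is precisely $\prod_{p\le k}(1-(\lambda_f(p^2)+1)/p)$ rather than something slightly larger; the cuspidal sum is, by contrast, a fairly mechanical combination of Lemmas~\ref{lem:Iwaniec} and~\ref{lem:IBP} with Weyl's law.
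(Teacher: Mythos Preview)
Your architecture is exactly that of the paper: spectral decomposition, truncation via Lemma~\ref{lem:IBP} to $|t_j|,|t|\le CM(\log k)^{\varepsilon}$, then Cauchy--Schwarz, Bessel's inequality for $\sum_j|\langle u_j,\phi\rangle|^2$ and $\int|\langle E,\phi\rangle|^2$, Lemma~\ref{lem:Iwaniec} plus Weyl's law for the cuspidal piece. That part of your sketch is essentially a transcription of the paper's proof.

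The one substantive deviation is the continuous spectrum. The paper does \emph{not} redo the Rankin--Selberg computation you outline; it simply quotes
\[
|\langle |F_k|^2, E(\cdot,\tfrac12+it)\rangle|\ll (1+|t|)\,(\log k)^{\varepsilon}\exp\Big(-\sum_{p\le k}\frac{\lambda_f(p^2)+1}{p}\Big),
\]
which is Corollary~1 of Soundararajan's weak-subconvexity paper combined with the Holowinsky--Soundararajan lower bound on $L(1,\text{sym}^2 f)$. Your unfolding leads to the same expression, but the step ``bound the numerator via the approximate functional equation and elementary mean-value bounds'' will not close: the convexity bound $L(\tfrac12+it,\text{sym}^2 f)\ll k^{1/2+\varepsilon}$, after the Stirling factor $k^{-1/2}$, leaves only $k^{\varepsilon}$, and together with $L(1,\text{sym}^2 f)^{-1}\ll(\log\log k)^3\prod_{p\le k}(1-\lambda_f(p^2)/p)$ you obtain $\prod_{p\le k}(1-\lambda_f(p^2)/p)$ rather than $\prod_{p\le k}(1-(\lambda_f(p^2)+1)/p)$ --- off by a full factor of $\log k$. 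The extra ``$+1$'' in the exponent is exactly the saving $|L(\tfrac12+it,\text{sym}^2 f)|\ll k^{1/2}(1+|t|)/(\log k)^{1-\varepsilon}$ from Soundararajan's weak subconvexity, which is a genuine theorem and not an elementary mean-value estimate. So the obstacle you flag is real, and the resolution is to cite that result rather than to try to reprove it by hand.
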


\begin{proof}
Starting with the spectral decomposition we have (see for instance Theorem 15.5 of 
\cite{IK})
\begin{equation} \label{equ:mainque}
\langle |F_k|^2 , \phi \rangle =\frac{3}{\pi} \cdot \langle 1, \phi \rangle + \sum_{j \geq 1}
\langle |F_k|^2 , u_j \rangle \langle u_j , \phi \rangle + \frac{1}{4\pi}
\int_{\mathbb{R}} \langle |F_k|^2, E(\cdot, \tfrac 12 + it)\rangle \langle E
(\cdot , \tfrac 12 + it) , \phi \rangle dt. 
\end{equation}
By the previous lemma we have
$$
|\langle  u_j, \phi \rangle| \ll_{A} \frac{(CM)^{2A}}{1 + |t_j|^{2A}} \ \text{ and } \ 
|\langle E(\cdot, \tfrac 12 + it) , \phi \rangle| \ll_A  \frac{(CM)^{2A}}{1 + |t|^{2A-1}}
$$
for any fixed $A > 0$.  

Combining Corollary 1 of \cite{S}
with Lemma 2 in \cite{HS} we have
\begin{equation} \label{eq:soundbound}
|\langle |F_k|^2, E(\cdot, \tfrac 12 + it) \rangle|\ll (1 + |t|)
\exp \Big ( - \sum_{p \leq k} \frac{\lambda_f(p^2) + 1}{p} \Big ) (\log k)^{\varepsilon}.
\end{equation}
(Note that here we have used a slightly stronger form of Corollary 1 of \cite{S}, which
is easily seen to follow from the proof.)
Using the above bounds with Lemma \ref{lem:Iwaniec} it follows that the 
terms with $|t_j| > CM (\log k)^{\varepsilon}$  and $|t| > CM (\log k)^{\varepsilon}$ in (\ref{equ:mainque}) contribute an amount at most $O((\log k)^{-A})$. 
Recalling Weyl's law, that is $\sum_{|t_j| \le T} 1 \sim T^2/12$, which has been established
here by Selberg, and applying Lemma \ref{lem:Iwaniec} and Bessel's inequality it follows that
the contribution of the remaining cusp forms is bounded by
\begin{equation} \label{eq:specdiscbd}
\begin{split}
\Big ( \sum_{|t_j| \leq CM (\log k)^{\varepsilon}} |\langle |F_k|^2 , & u_j \rangle| ^2
\Big )^{1/2} \cdot \Big ( \sum_{j} |\langle u_j, \phi \rangle|^2 \Big )^{1/2}
 \\ 
& \ll (CM)^{3/2+\varepsilon} (\log k)^{\varepsilon} \cdot \prod_{p \leq k} \Big (1 - \frac{n(p)}{p}
\Big ) \cdot \| \phi \|_2.
\end{split}
\end{equation} 
The remaining Eisenstein series contribution is bounded by
\begin{equation} \label{eq:speccontbd}
\begin{split}
&\Big ( \int\limits_{|t| \leq CM (\log k)^{\varepsilon}} |\langle |F_k|^2 , E(\cdot, \tfrac 12 + it) \rangle| ^2 \, dt
\Big)^{1/2} \cdot \Big (  \int\limits_{\mathbb R} |\langle  E(\cdot, \tfrac 12 + it),  \phi \rangle| ^2 \, dt \Big)^{1/2} \\ 
&  \qquad \qquad \qquad \qquad \ll (CM)^{3/2} (\log k)^{\varepsilon} \cdot \prod_{p \leq k} \Big (1 - \frac{\lambda_f(p^2) + 1}{p}
\Big ) \cdot \| \phi \|_2
\end{split}
\end{equation} 
using (\ref{eq:soundbound}) and Bessel's inequality. 
Using \eqref{eq:specdiscbd} and \eqref{eq:speccontbd} in \eqref{equ:mainque} gives the claim.
\end{proof}

\subsection{Holowinsky's approach}
The next two lemmas combine to give an estimate for $\langle |F_k|^2, \phi \rangle$
by using a smoothed incomplete Eisenstein series and bounds for a shifted convolution problem. This mirrors the route taken by Holowinsky~\cite{H}. 
\begin{lemma} \label{lem:LSlem}
Let $h$ be a smooth, positive-valued function, such that $h^{(\ell)}(x) \ll M^{\ell}$ for all integers $\ell \geq 0$
and assume $M \le \log k$. 
Suppose in addition that $h$ is supported in $[1/2, C]$ with $C \le \log k$.  Let $I \subset [-1/2,1/2]$ be an interval. Then,
\begin{equation} \label{eq:mainHT}
\begin{split}
\int_{0}^{\infty} \int_{I} |F_k(z)|^2 h(y) \frac{dx dy}{y^2} =& \left(|I|+O\left( \frac{1}{(\log k)^3} \right) \right) \langle E(z|h) F_k, F_k \rangle \\
 & +O\left( (\log k)^{\varepsilon}  \prod_{p \le k} \left(1-\frac{(|\lambda_f(p)|-1)^2}{p} \right) \right),
\end{split}
\end{equation}
where $E(z|h)$ is the incomplete Eisenstein series given by
\[
E(z|h)=\sum_{\gamma \in \Gamma_{\infty} \backslash \Gamma} h(\tmop{Im}(\gamma z)).
\]
\end{lemma}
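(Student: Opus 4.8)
The plan is to follow Holowinsky's route, comparing the integral over the thin strip $I \times (0,\infty)$ with the integral over the full period strip. The starting point is Rankin--Selberg unfolding: since $|F_k(z)|^2 = y^k |f(z)|^2$ is $\Gamma$-invariant and $h$ is supported in $[1/2,C]$, we have
\[
\langle E(z|h) F_k, F_k \rangle = \iint_{\mathcal F} \Big( \sum_{\gamma \in \Gamma_\infty \backslash \Gamma} h(\tmop{Im}(\gamma z)) \Big) |F_k(z)|^2 \, \frac{dx\,dy}{y^2} = \int_0^\infty \int_{-1/2}^{1/2} h(y) |F_k(z)|^2 \, \frac{dx\,dy}{y^2},
\]
where in the last step we unfolded and used that $|F_k|^2$ has period $1$ in $x$. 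Thus \eqref{eq:mainHT} is equivalent to the statement that replacing the full period $[-1/2,1/2]$ by the sub-interval $I$ changes the integral by $|I|$ times the same quantity, plus the claimed errors; equivalently, that the non-constant Fourier modes of $|F_k|^2$ in the $x$-variable contribute negligibly.

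To make this precise I would sandwich $\mathbf 1_I$ between a smooth minorant and majorant $\psi^\mp$ on $\mathbb R/\mathbb Z$, each equal to $\mathbf 1_I$ outside transition intervals of total length $\ll \eta$ with $\eta = (\log k)^{-3}$, and with derivative bounds $(\psi^\pm)^{(\ell)} \ll \eta^{-\ell}$; then $\widehat{\psi^\pm}(0) = |I| + O(\eta)$ and $\widehat{\psi^\pm}(j) \ll_B \min(1,(|j|\eta)^{-B})$ for every $B \ge 0$. Since $h$ and $|F_k|^2$ are non-negative,
\[
\int_0^\infty \int_{-1/2}^{1/2} \psi^-(x) h(y) |F_k|^2 \frac{dx\,dy}{y^2} \le \int_0^\infty \int_I h(y) |F_k|^2 \frac{dx\,dy}{y^2} \le \int_0^\infty \int_{-1/2}^{1/2} \psi^+(x) h(y) |F_k|^2 \frac{dx\,dy}{y^2},
\]
so it suffices to evaluate $\int_0^\infty \int_{-1/2}^{1/2} \psi(x) h(y) |F_k|^2 \frac{dx\,dy}{y^2}$ for $\psi \in \{\psi^-,\psi^+\}$. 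Expanding $\psi$ in its rapidly convergent Fourier series and interchanging the sum with the integrals, the $j=0$ term is $\widehat\psi(0)\langle E(z|h)F_k,F_k\rangle = (|I| + O((\log k)^{-3}))\langle E(z|h)F_k,F_k\rangle$, which is the main term of \eqref{eq:mainHT}.

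For $j \ne 0$ I would insert the Fourier expansion $f(z) = c_k \sum_{n \ge 1} \lambda_f(n) n^{(k-1)/2} e(nz)$, where $c_k$ makes $\|F_k\| = 1$, so that by Rankin--Selberg $|c_k|^2 \asymp (4\pi)^{k-1}/(\Gamma(k-1) L(1,\tmop{sym}^2 f))$. Carrying out the $x$-integral leaves, for each $j$, the shifted convolution sum
\[
|c_k|^2 \sum_{m \ge 1} \lambda_f(m)\lambda_f(m+|j|) (m(m+|j|))^{(k-1)/2} \int_0^\infty y^{k-2} h(y) e^{-2\pi(2m+|j|)y}\,dy.
\]
By Stirling the $y$-integral localizes $m$ to a range of length $\asymp k$ around $k$, and there, since $|j| \ll (\log k)^3$, the archimedean factors differ from those of the diagonal $j=0$ term only by $1 + O(|j|^2/k) = 1 + o(1)$; absorbing them turns the display into $L(1,\tmop{sym}^2 f)^{-1}$ times a smoothly weighted shifted convolution sum $\sum_{m} \lambda_f(m)\lambda_f(m+|j|) w(m)$ with $w$ a fixed non-negative weight. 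At this point I would invoke Holowinsky's estimate for shifted convolution sums of holomorphic Hecke cusp forms; combined with the Holowinsky--Soundararajan bound $L(1,\tmop{sym}^2 f)^{-1} \ll (\log\log k)^3 \prod_{p \le k}(1 - \lambda_f(p^2)/p)$ (Lemma 2 of \cite{HS}, already used above) and a rearrangement of Euler products, this bounds each such sum by $\ll (\log k)^\varepsilon d(|j|)^{O(1)} \prod_{p \le k}(1 - (|\lambda_f(p)|-1)^2/p)$. Summing over $j \ne 0$ and using the decay $\widehat\psi(j) \ll_B (|j|\eta)^{-B}$, we get $\sum_{j \ne 0} |\widehat\psi(j)| d(|j|)^{O(1)} \ll (\log(1/\eta))^{O(1)} \ll (\log\log k)^{O(1)}$, which is absorbed into $(\log k)^\varepsilon$; this produces the second error term in \eqref{eq:mainHT}, and the sandwich inequality finishes the proof.

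The main obstacle is the shifted convolution estimate: bounding $\sum_m \lambda_f(m)\lambda_f(m+l) w(m)$ with a saving of the full product $\prod_{p \le k}(1 - (|\lambda_f(p)|-1)^2/p)$ over the trivial divisor bound, and with enough uniformity in the shift $l$ to sum over the Fourier modes. This is exactly the content of Holowinsky's work, so the remaining work is essentially bookkeeping: tracking the weight-$k$ archimedean normalisation through Stirling, controlling $c_k$ via its link to $L(1,\tmop{sym}^2 f)$, justifying the interchange of the $j$-sum with the $x$- and $y$-integrals (legitimate once $\mathbf 1_I$ has been smoothed), and carrying out the Euler-product manipulation that converts the combined Holowinsky/Holowinsky--Soundararajan bound into the product appearing in \eqref{eq:mainHT}.
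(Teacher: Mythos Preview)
Your approach is essentially the same as the paper's: unfold, approximate $\mathbf 1_I$ by a trigonometric expansion, and bound the nonzero Fourier modes via Holowinsky's shifted-convolution estimate combined with the Holowinsky--Soundararajan lower bound for $L(1,\tmop{sym}^2 f)$. The only cosmetic difference is that the paper uses Beurling--Selberg polynomials of degree $H=(\log k)^3$ (so the Fourier support is literally finite, with coefficients $\ll 1/|\ell|$), whereas you use smooth majorants/minorants with transition width $\eta=(\log k)^{-3}$ and rapidly decaying Fourier coefficients; both choices give $\sum_{j\neq 0}|\widehat\psi(j)|\,d(|j|)^{O(1)}\ll(\log\log k)^{O(1)}$, so the endgame is identical.

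One small bookkeeping remark: the paper packages your Stirling/localization step by citing the Luo--Sarnak formula (their Proposition~2.1), which also records an additive error of size $(|j|+CM)^{B}/k^{1-\varepsilon}$ coming from the archimedean approximation. You should make sure this term is visible in your write-up; with $|j|$ effectively bounded by $(\log k)^{3}$ and $CM\ll(\log k)^2$ it is $\ll k^{-1+\varepsilon}$ after summing over $j$, so it is harmless, but it is the one place where the hypotheses $M\le\log k$ and $C\le\log k$ are actually used.
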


\begin{proof} Consider the following incomplete Poincare series, 
$$
P_{h,m}(z) := \sum_{\gamma \in \Gamma_{\infty} \backslash \Gamma}
h( \tmop{Im} (\gamma z)) e(m \tmop{Re} (\gamma z)),
$$
and note $P_{h,0}=E(\cdot|h)$.
 Using the standard unfolding method, we get
\begin{equation}\label{eq:unfold}
\langle |F_k|^2 , P_{h,m} \rangle = \int_{0}^{\infty} \int_{-1/2}^{1/2} 
|F_k(z)|^2   h(y) e(mx ) \, \frac{dx \, dy}{y^2}. 
\end{equation}
Applying Proposition 2.1 of \cite{LS1}, which follows from expanding $|F_k|^2$,
and keeping track of the dependencies on $m$ and $h$
one has
\begin{equation} \label{eq:LSprop}
\begin{split}
\langle |F_k|^2 , P_{h,m}(z) \rangle =& \frac{2\pi^2}{(k-1) L(1, \text{sym}^2 f)}
\sum_{r \ge 1} \lambda_f(r) \lambda_f(r + m)  h \Big ( \frac{k-1}{4\pi(r + m/2)}
\Big ) \\
&+ O\Big(\frac{(|m|+CM)^B}{k^{1-\varepsilon}}\Big), 
\end{split}
\end{equation}
where $B$ is a sufficiently large absolute constant.

Using Beurling-Selberg polynomials (see for instance Chapter 1 of Montgomery \cite{Montgomery}) there exists coefficients $a_{\ell,H}^{-}(I)$ and $ a_{\ell,H}^{+}(I)$ such that $|a_{\ell, H}^{\pm}(I)| \ll 1/\ell$ and
\begin{equation} \label{eq:BS}
|I| - \frac{1}{H+1} + \sum_{0 \neq |\ell| \leq H} a_{\ell,H}^{-}(I) e(\ell x)\leq 
\chi_{I}(x) \leq |I| + \frac{1}{H+1} + \sum_{0 \neq |\ell| \leq H} a_{\ell,H}^{+}(I)
e(\ell x) .
\end{equation}
Combining \eqref{eq:unfold}, \eqref{eq:LSprop}, and \eqref{eq:BS} it follows that
\begin{equation} \label{eq:evalQUE}
\begin{split}
&\int_{0}^{\infty} \int_{I} |F_k(z)|^2 h(y) \frac{dx dy}{y^2} =
\Big ( |I| + O \Big ( \frac{1}{H} \Big ) \Big ) \langle E(z|h) F_k, F_k \rangle \\
&  + O \Big ( \frac{1}{(k-1) L(1, \text{sym}^2 f)} \sum_{0 \neq |m| \leq H} \frac{1}{m} 
\sum_{r \ge 1} |\lambda_f(r) \lambda_f(r + m)| \cdot h \Big ( \frac{(k-1)}{4\pi (r + \frac{m}{2})} 
\Big )\Big) \\
& + O\Big(\frac{H(H+CM )^B}{k^{1-\varepsilon}}\Big ).
\end{split}
\end{equation}

To handle the off-diagonal terms in \eqref{eq:evalQUE}
we use a version of Shiu's bound (as in Holowinsky's work, see Theorem 1.2 of \cite{H}).
This gives
\begin{equation}\label{eq:evaloff}
\sum_{0 \neq |m| \leq H} \frac{1}{m} 
\sum_{r \ge 1} |\lambda_f(r) \lambda_f(r + m)| \cdot h \Big ( \frac{k-1}{4\pi (r + \frac{m}{2})} 
\Big ) \ll k (\log k)^\varepsilon (\log H)^2 \cdot \prod_{p \leq k} \Big ( 1 + \frac{2|\lambda_f(p)|-2}{p} \Big ).
\end{equation}
To complete the proof take $H=(\log k)^{3}$ and use \eqref{eq:evaloff} 
in \eqref{eq:evalQUE} along with the bound
$$
(\log \log k)^{-3} \exp \Big ( \sum_{p \leq k} \frac{\lambda_f(p^2)}{p} 
\Big ) \ll L(1 , \text{sym}^2 f) \ll (\log k)^2.
$$
(The lower bound in the above equation is proven in \cite{HS} while the upper bound is classical.) 
\end{proof}

In the next lemma we repeat the argument of Holowinsky to evaluate main term in \eqref{eq:mainHT}. 
 To do this we use the Fourier expansion of the incomplete Eisenstein series, which is given by
\[
E(z|h)=a_{0,h}(y)+\sum_{|\ell| \ge 1} a_{\ell,h}(y) e(\ell x).
\]
The Fourier coefficients are obtained from those of $E(z,s)$ (see equation \eqref{eq:eisen}). Writing $H$ for the Mellin transform of $h$ and noting $E(z|h)=\frac{1}{2\pi i} \int_{(2)} H(-s) E(z,s) \, ds$ one has, by shifting contours, for $\ell \neq 0$
\[
a_{\ell,h}(y)=\left(\frac{y}{\pi} \right)^{1/2} \int_{\mathbb R} \frac{\pi^{it} H(-\tfrac12-it)}{\Gamma(\tfrac12+it) \zeta(1+2it)} \tau_{it}(|\ell|) K_{it}(2\pi |\ell|y) \, dt.
\]
Observe that by repeatedly integrating by parts $H(-s) \ll \frac{ (CM)^A}{1+|t|^A}$, for any integer $A \ge 1$. 
Applying \eqref{eq:kbessel} it follows for any integer $A \ge 1$ that
\begin{equation} \label{eq:fouriercoef}
a_{\ell,h}(y) \ll  y^{1/2} d(|\ell|) \min\left( CM, \frac{(CM)^A}{|\ell y|^{A-2/3-\varepsilon}}\right).
\end{equation}
Additionally, we get by shifting contours that
\begin{equation} \label{eq:zerofourier}
\begin{split}
a_{0,h}(y)=&\frac{1}{2\pi i} \int_{(2)} H(-s)\left(y^s +\frac{\theta(1-s)}{\theta(s)}y^{1-s} \right) \, ds \\
=& \frac{3}{\pi}  H(-1)+O( (CM)^2 \sqrt{y}).
\end{split}
\end{equation}

\begin{lemma} \label{lem:holowinsky}
Let $h$ be as in the previous lemma. Then
\[
\begin{split}
\langle E(z|h) F_k, F_k \rangle=& \frac{3}{\pi} \int_0^{\infty} h(y) \, \frac{dy}{y^2} +O\left( (CM)^2 (\log k)^{\varepsilon} \prod_{p \le k} \left(1-\frac{\tfrac12(|\lambda_f(p)|-1)^2}{p} \right)\right).
\end{split}
\]
\end{lemma}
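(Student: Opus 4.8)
The plan is to compute $\langle E(z|h) F_k, F_k \rangle$ by unfolding against the Fourier expansion of the incomplete Eisenstein series, exactly as in Holowinsky's original argument, but tracking the dependence on $C$ and $M$. First I would write $\langle E(z|h) F_k, F_k \rangle = \iint_{\mathcal{F}} E(z|h) |F_k(z)|^2 \, \tfrac{dx\,dy}{y^2}$ and substitute the Fourier expansion $E(z|h) = a_{0,h}(y) + \sum_{|\ell| \ge 1} a_{\ell,h}(y) e(\ell x)$. Integrating over $x \in [-1/2,1/2]$ picks out, for each $\ell$, the $\ell$-th Fourier coefficient of $|F_k(z)|^2$; writing $|F_k(z)|^2 = y^k |f(z)|^2$ and $f(z) = \sum_n \lambda_f(n) n^{(k-1)/2} e(nz)$ (suitably normalized), the $x$-integral leaves a sum $\sum_{n} \lambda_f(n)\lambda_f(n+\ell) (\cdots)$ against a $y$-integral of $a_{\ell,h}(y) y^{k}$ times exponentials. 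The $\ell = 0$ term, using \eqref{eq:zerofourier}, produces the main term $\tfrac{3}{\pi} H(-1) = \tfrac{3}{\pi}\int_0^\infty h(y)\,\tfrac{dy}{y^2}$ after one recognizes $H(-1)$ as the relevant Mellin value, plus an error of size $O((CM)^2 \cdot (\text{something negligible in } k))$ coming from the $\sqrt{y}$ term in \eqref{eq:zerofourier}.

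Next I would bound the contribution of the $\ell \neq 0$ Fourier modes. Here the key input is the bound \eqref{eq:fouriercoef} on $a_{\ell,h}(y)$, which shows $a_{\ell,h}(y) \ll y^{1/2} d(|\ell|)\min(CM, (CM)^A |\ell y|^{-A+2/3+\varepsilon})$; combined with the rapid decay of $y^k e^{-4\pi n y}$, which forces $y \asymp k/n$ and $n \asymp k/y$, the effective range of $\ell$ is bounded (essentially $|\ell| \ll CM$ up to logarithmic losses, since larger $\ell$ is killed by taking $A$ large). For each such $\ell$ one is left to estimate $\tfrac{1}{(k-1)L(1,\mathrm{sym}^2 f)}\sum_{r \ge 1}|\lambda_f(r)\lambda_f(r+\ell)| h(\cdots)$, which is precisely a shifted convolution sum of the type handled by Shiu's bound, exactly as in the proof of Lemma \ref{lem:LSlem} (see \eqref{eq:evaloff}). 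Summing the Shiu bound $\ll k (\log k)^\varepsilon \prod_{p\le k}(1 + \tfrac{2|\lambda_f(p)|-2}{p})$ over the $O(CM \log k)$ relevant values of $\ell$, dividing by $(k-1)L(1,\mathrm{sym}^2 f)$ and using the standard lower bound $L(1,\mathrm{sym}^2 f) \gg (\log\log k)^{-3}\exp(\sum_{p\le k}\lambda_f(p^2)/p)$, one recombines the Euler factors: $\exp(-\sum_p \lambda_f(p^2)/p) \cdot \prod_p(1+\tfrac{2|\lambda_f(p)|-2}{p})$ is, up to $(\log k)^\varepsilon$, bounded by $\prod_{p\le k}(1 - \tfrac{\tfrac12(|\lambda_f(p)|-1)^2}{p})$ after noting $\lambda_f(p^2) = \lambda_f(p)^2 - 1$ and doing the elementary manipulation $2|\lambda_f(p)| - 2 - (\lambda_f(p)^2-1) = -(|\lambda_f(p)|-1)^2$. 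Absorbing the factor $(CM)^2$ (coming from the weaker-than-one savings on the $\ell$-range and from \eqref{eq:zerofourier}) gives the stated error term.

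The main obstacle is the bookkeeping of the $C,M$-dependence through the Fourier coefficient bound \eqref{eq:fouriercoef} and the truncation of the $\ell$-sum: one must check that the polynomial loss in $CM$ is genuinely only quadratic, i.e. that choosing the parameter $A$ in \eqref{eq:fouriercoef} large enough makes the tail $|\ell| \gg CM (\log k)^\varepsilon$ negligible while the head contributes $O((CM)^2 (\log k)^\varepsilon)$ times the shifted-convolution factor. Everything else — the unfolding, the appeal to Shiu's bound, and the Euler-product manipulation — is routine and parallels Lemma \ref{lem:LSlem} almost verbatim.
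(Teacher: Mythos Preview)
Your sketch has two genuine gaps.

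First, the unfolding step is not well-posed. You write the inner product as an integral over $\mathcal{F}$ and then propose to ``integrate over $x \in [-1/2,1/2]$'' after inserting the Fourier expansion of $E(z|h)$. But $\mathcal{F}$ is not a rectangle for $\sqrt{3}/2 \le y < 1$, and the Fourier coefficients $a_{\ell,h}(y)$ do not inherit the compact $y$-support of $h$, so you cannot simply reduce to an $x$-integral over a full period. If instead you unfold the sum defining $E(z|h)$ to get $\int_0^\infty\int_{-1/2}^{1/2} h(y)|F_k|^2\,\tfrac{dx\,dy}{y^2}$, the $x$-integration kills \emph{all} off-diagonal terms and no shifted convolution ever appears, so your second paragraph becomes moot. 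The paper resolves this by introducing a second incomplete Eisenstein series $E^{Y}(z|g)$ with a free parameter $Y$: one unfolds $E^{Y}$ to land on the strip $\Gamma_\infty\backslash\mathbb{H}$, and \emph{then} inserts the Fourier expansion of $E(z|h)$, which is where the $\ell \neq 0$ shifted-convolution terms legitimately arise.

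Second, and more seriously, the $\ell = 0$ term does not produce the main term for free. After the above unfolding the diagonal contribution is essentially
\[
\frac{2\pi^2}{(k-1)L(1,\mathrm{sym}^2 f)}\sum_{n\ge 1}\lambda_f(n)^2\,g\!\left(\frac{Y(k-1)}{4\pi n}\right),
\]
and extracting the main term from this sum with a usable error requires Soundararajan's weak-subconvexity bound for $L(\tfrac12+it,\mathrm{sym}^2 f)$. Weak subconvexity saves only a single power of $\log k$ on the critical line; after dividing by $L(1,\mathrm{sym}^2 f)\gg (\log k)^{-1}$ the resulting diagonal error is of size $(CM)^2(\log k)^\varepsilon/\sqrt{Y}$. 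This is precisely why the parameter $Y$ is essential: the off-diagonal bound grows like $\sqrt{Y}\prod_{p\le k}(1-(|\lambda_f(p)|-1)^2/p)$, and one balances by taking $Y=\prod_{p\le k}(1+(|\lambda_f(p)|-1)^2/p)$, which is also what produces the factor $\tfrac12$ in the exponent of the final Euler product. Your proposal amounts to $Y\asymp 1$, giving a diagonal error of size $(\log k)^\varepsilon$, i.e.\ no saving at all. The Euler-product algebra at the end of your sketch is correct, but it only accounts for the off-diagonal; the heart of the lemma is the weak-subconvexity input on the diagonal and the $Y$-optimization, both of which are missing.
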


\begin{proof}

The proof closely follows the work of Holowinsky~\cite{H}, whose main analytic tool 
is
the smoothed incomplete Eisenstien series
\[
E^Y(z|g)=\sum_{\gamma \in \Gamma_{\infty}\backslash \Gamma} g(Y \tmop{Im}(\gamma z)),
\]
where $g$ is a fixed smooth function that is compactly supported on the positive reals.
Writing $G$ for the Mellin transform of $g$ and shifting contours it follows that
\[
\begin{split}
\langle E^Y(z|g) E(z|h) F_k, F_k \rangle=&
\frac{1}{2\pi i}\int_{(2)} G(-s) Y^s \langle E(z,s) E(z|h) F_k, F_k \rangle ds \\
=&Y \frac{3}{\pi} G(-1) \langle E(z|h) F_k, F_k \rangle \\
&+\frac{1}{2\pi i} \int_{(\frac12)} G(-s)Y^s \langle E(z,s) E(z|h) F_k, F_k \rangle ds.
\end{split}
\]
We bound the inner product in the last integral by applying \eqref{eq:eisenbd} to get by unfolding 
\[
\begin{split}
\langle E(z,s) E(z|h) F_k, F_k \rangle=&\int_0^{\infty} \int_{-1/2}^{1/2} h(y) E(z,s) |F_k(z)|^2 \frac{dx \, dy}{y^2} \\
\ll& (1+|s|) \int_{1/2}^{C} \int_{-1/2}^{1/2} h(y) \sqrt{y} \, |F_k(z)|^2 \frac{dx \, dy}{y^2} \ll \sqrt{C} (1+|s|) \lVert F_k \rVert_2.
\end{split}
\]
This gives
\begin{equation} \label{eq:contourshift}
\langle E^Y(z|g) E(z|h) F_k, F_k \rangle=Y \frac{3}{\pi} G(-1) \langle E(z|h) F_k, F_k \rangle+O\left( \sqrt{YC} \right).
\end{equation}

The Hecke cusp form $f$ has a Fourier expansion
\[
f(z)=\sum_{n \ge 1} a_f(n) e(nz).
\]
Since we have normalized with $\langle f, f\rangle=1$
the eigenvalues $\lambda_f(n)$ of the Hecke operators are related to the Fourier coefficients $a_f(n)$
by the relation
\[
\lambda_f(n) n^{(k-1)/2} a_f(1)=a_f(n)
\]
with 
\[
|a_f(1)|^2=\frac{2\pi^2 (4\pi)^{k-1}}{\Gamma(k) L(1, \tmop{sym}^2 f)}.
\]
We now use the unfolding method to get that
\begin{equation} \label{eq:fourier}
\begin{split}
\langle E^Y(z|g) E(z|h) F_k, F_k \rangle=&\int_0^{\infty} \int_{-1/2}^{1/2} g(Yy) E(z|h) |F_k(z)|^2 \frac{dx \, dy}{y^2} \\
=&  \frac{2\pi^2 (4\pi)^{k-1}}{\Gamma(k) L(1, \tmop{sym}^2 f)}
\sum_{ \ell  } \sum_{n \ge 1} \lambda_f(n) \lambda_f(n+\ell) (n(n+\ell))^{\frac{k-1}{2}} \\
& \qquad \qquad \times \int_0^{\infty} y^k g(Yy)  a_{\ell, h}(y) e^{-2\pi(2n+\ell)y} \frac{dy}{y^2}.
\end{split}
\end{equation}
Using Mellin inversion and Stirling's formula an argument of Luo and Sarnak \cite{LS1} gives
\begin{equation} \label{eq:gammafn}
\begin{split}
&\frac{(4\pi)^{k-1}}{\Gamma(k)}
(n(n+\ell))^{\frac{k-1}{2}}
 \int_0^{\infty} y^k g(Yy)  e^{-2\pi(2n+\ell)y} \frac{dy}{y^2} \\
&\qquad \qquad \qquad \qquad =\frac{1}{k-1} \cdot  g\left( \frac{Y(k-1)}{4\pi(n+\frac{\ell}{2})}\right)+O\left(\frac{1}{k^{1/2-\varepsilon}(n+\frac{\ell}{2})} \right),
\end{split}
\end{equation}
(see the proof of Proposition 2.1 of \cite{LS1} or the argument leading up to formula (20) of Holowinsky \cite{H}).

To bound the terms with $\ell \neq 0$ in \eqref{eq:fourier} we first use \eqref{eq:fouriercoef} and \eqref{eq:gammafn}.
Then 
we apply Shiu's bound as in the proof of the previous lemma. Thus, the terms with $\ell \neq 0$ are bounded by
\begin{equation} \label{eq:eisenoffdiag}
\begin{split}
\ll & \sum_{|\ell| \ge 1} \frac{ d(|\ell|) \min\left( CM, \frac{(MC)^2 }{|\ell Y^{-1}|^{4/3-\varepsilon}}\right) }{k \sqrt{Y} L(1,\tmop{sym}^2 f)} \sum_{n \ge 1} |\lambda_f(n)\lambda_f(n+\ell)| g\left(\frac{Y(k-1)}{4\pi(n+\frac{\ell}{2})} \right) \\
\ll & \frac{\sqrt{Y}}{L(1,\tmop{sym}^2 f)} \prod_{p \le k} \left( 1+\frac{2|\lambda_f(p)|-2}{p} \right)  \sum_{|\ell| \ge 1}\min\left( CM, \frac{(CM)^2}{|\ell Y^{-1}|^{4/3-\varepsilon}}\right)  d(|\ell|)^2 \\
\ll &  (\log k)^{\varepsilon} \cdot (CM)^{7/4} Y^{3/2+\varepsilon} \prod_{p \le k} \left( 1-\frac{\lambda_f(p^2)-2|\lambda_f(p)|+2}{p} \right).
\end{split}
\end{equation}

It remains to estimate the contribution from the zeroth Fourier coefficient of $E(z|h)$ in \eqref{eq:fourier}.
Assuming $Y \le \log k$ and using \eqref{eq:zerofourier} and \eqref{eq:gammafn}, the term with $\ell=0$ in the right-hand side of \eqref{eq:fourier} equals
\begin{equation} \label{eq:LSstep2}
\begin{split}
&\frac{2\pi^2}{(k-1) L(1, \tmop{sym}^2 f)} \sum_{n \ge 1} |\lambda_f(n)|^2 n^{k-1} \int_0^{\infty} g(Yy)  a_{0, h}(y) e^{-4\pi ny} \frac{dy}{y^2} \\
=& \left( \frac{3}{\pi}\langle E(z|h), 1 \rangle+O\left(\frac{(CM)^2}{\sqrt{Y}}\right) \right)\frac{2\pi^2}{(k-1) L(1, \tmop{sym}^2 f)} \sum_{n \ge 1} |\lambda_f(n)|^2 g\left( \frac{Y(k-1)}{4 \pi n} \right)
\end{split}
\end{equation}
To evaluate the sum on the right-hand side
 we employ Soundararajan's \cite{S} weak sub-convexity estimate. Let $G$ denote the Mellin transform of $g$ and observe that $G(s) \ll_A  (1 + |s|)^{-A}$ for any fixed $A$ in any vertical strip $-3 \le a \leq \tmop{Re} (s) \leq b<0$. Then
$$
\sum_{r \ge 1} |\lambda_f(r)|^2 \cdot g \Big ( \frac{Y(k-1)}{4\pi r} \Big )
= \frac{1}{2\pi i} \int_{(2)} \Big ( \frac{Y(k-1)}{4\pi} \Big )^{s}
\cdot \frac{L(s, f \otimes f)}{\zeta(2s)} \cdot G(-s) \ ds. 
$$
Shifting contours to $\text{Re} (s) = \tfrac12$ we collect a pole at $s = 1$
with residue 
\[
\frac{Y(k-1)}{4\pi} \cdot \frac{6}{\pi^2} G(-1) \cdot L(1, \text{sym}^2 f).
\]
To bound the integral
on the line $\text{Re} (s) = \tfrac12$ we use
the estimate
\[
|L(\tfrac12+it, \tmop{sym}^2 f) | \ll \frac{k^{1/2}(1+|t|)}{(\log k)^{1-\varepsilon}}
\]
due to Soundararajan \cite{S} (see Example (1.1)).
We conclude that
\begin{equation}\label{eq:evalmain}
\sum_{r \ge 1} |\lambda_f(r)|^2 \cdot g \Big ( \frac{Y(k-1)}{4\pi r} \Big )
= \frac{Y(k-1)}{4\pi} \cdot \frac{6}{\pi^2} G(-1) \cdot L(1, \text{sym}^2 f) + O \Big ( \frac{\sqrt{Y} \cdot k}{(\log k)^{1 - \varepsilon}} \Big ).
\end{equation}

Use the estimates \eqref{eq:eisenoffdiag}, \eqref{eq:LSstep2}, and \eqref{eq:evalmain} in \eqref{eq:fourier}. Next combine the resulting formula with \eqref{eq:contourshift}. Finally, use the bound $L(1, \tmop{sym}^2 f) \gg (\log k)^{-1}$ (which follows from the work of Hoffstein and Lockhart \cite{HL}) to get
\[
\begin{split}
\langle E(z|h) F_k, F_k \rangle=& \frac{3}{\pi} \langle E(z|h) ,1 \rangle +O\left( \frac{ (CM)^2 (\log k)^{\varepsilon}}{\sqrt{Y}} \right)\\
&+O\left( (\log k)^{\varepsilon}(CM)^{7/4} \sqrt{Y} \prod_{p \le k} \left(1-\frac{(|\lambda_f(p)|-1)^2}{p} \right) \right).
\end{split}
\]
To complete the proof take 
$$Y=\prod_{p \le k} \left(1+\frac{(|\lambda_f(p)|-1)^2}{p} \right).$$

\end{proof}

\subsection{Proof of Effective QUE}
\begin{proof}[Proof of Theorem~\ref{effective que}]
Let $\mathcal{R} = \{ (x,y): x\in I, y \in J\} \subset \mathcal F$ be a rectangular domain
and write $\mathcal R=I\times J$ where
 $I$ and $J$ are intervals. 
Let $\mathcal R'=\mathcal R \cap \mathcal \{z \in \mathcal F:
\tmop{Im}(z) \le (\log k)^{\eta_1}\}$, where $0< \eta_1 \le 1$
will be chosen later,
and note that $\mathcal R'$ is also a rectangular domain. 
We now will use a result of Soundararajan which
bounds the amount of $L^2$-mass of $y^{k/2}f(z)$
high in the cusp. This enables
us to restrict
to rectangular regions of the form $\mathcal R'$.
From the main result of Soundararajan 
\cite{Soundmass} we have that
\[
\iint\limits_{\substack{|\tmop{Re}(z)|\le \frac12 
\\ \tmop{Im}(z) \ge (\log k)^{\eta_1}}} y^k |f(z)|^2 \frac{dx \, dy}{y^2}
\ll \frac{1}{(\log k)^{\eta_1/2-\varepsilon}}.
\]
Thus,
\begin{equation} \label{eq:QUErestrict}
\Big|\langle |F_k|^2, \chi_{\mathcal R} \rangle
-\frac{3}{\pi} \tmop{Area}_{\mathbb H}(\mathcal R) \Big|
= \Big|\langle |F_k|^2, \chi_{\mathcal R'} \rangle-\frac{3}{\pi} \tmop{Area}_{\mathbb H}(\mathcal R') \Big|+
O\Big(\frac{1}{(\log k)^{\eta_1/2-\varepsilon}} \Big).
\end{equation} 
Hence, we may restrict our attention to rectangular domains
lying inside $\{z \in \mathcal F:
\tmop{Im}(z) \le (\log k)^{\eta_1}\}$ at the cost
of an error that is $O((\log k)^{-\eta_1/2+\varepsilon})$.

 We now consider smooth functions $\phi_{J}^{\pm}(y)$ that majorize or minorize (resp.) the
characteristic function of
the interval $J=[c,d]$, where $d\le C \le (\log k)^{\eta_1}$.
Let $\phi_{J}^{\pm}(y)$ be such that $\phi_{J}^{\pm}(y) = 1$ for $y \in J$.
Moreover, 
suppose that $\phi_{J}^{\pm}(y)$ is supported in $J_{\delta}=[c\mp\delta,d\pm\delta]$ 
and satisfies $(\phi_{J}^{\pm})^{(\ell)}(y) \ll (1/\delta)^{\ell}$ for all $\ell \geq 1$. We also pick a $\phi_{I}^{\pm}(x)$ with identical properties. Consider $\phi^{\pm}(x,y) = \phi_{I}^{\pm}(x) \phi_{J}^{\pm}(x)$. Then we easily see that $\Delta^{\ell} \phi^{\pm} \ll (1/\delta)^{2\ell}$. We also choose
 $\delta=(\log k)^{-\eta_2}$, with $0<\eta_2 \le 1$
to be chosen later.

Applying Lemmas \ref{lem:LSlem} and \ref{lem:holowinsky} twice with $h(y)=\phi_J^{\pm}(y)$ we have,
\begin{equation} \label{eq:evalQUEMain1}
\begin{split}
\langle |F_k|^2 , \chi_{\mathcal{R}} \rangle =& \frac{3}{\pi} \cdot \langle 1, \chi_{\mathcal{R}} \rangle + O(\delta) + O \Big (  (C/\delta)^2 (\log k)^{\varepsilon} \prod_{p \leq k}
\Big ( 1 - \frac{\tfrac12 (|\lambda_f(p)| - 1)^2}{p} \Big ) \Big ).
\end{split}
\end{equation} 
On the other hand, according to Lemma \ref{lem:spectral} with $\phi(x,y)=\phi^{\pm}(x,y)$
\begin{equation} \label{eq:evalQUEMain2}
\begin{split}
\langle |F_k|^2,  \chi_{\mathcal{R}} \rangle =& \frac{3}{\pi} \cdot \langle 1, \chi_{R} \rangle 
+ O(\delta) +  O_A((\log k)^{-A})  \\ 
&+  O \Big ( (C/\delta)^{3/2+\varepsilon} \cdot (\log k)^{\varepsilon} \Big ( \prod_{p \leq k} \Big ( 1 - \frac{n(p)}{p} \Big ) + \prod_{p \leq k}
\Big ( 1 - \frac{\lambda_f(p^2) + 1}{p} \Big ) \Big ) .
\end{split}
\end{equation} 

First we set $\eta_2=\tfrac{1}{2} \cdot \eta_1$ to balance the error terms
in \eqref{eq:QUErestrict}, \eqref{eq:evalQUEMain1}, and \eqref{eq:evalQUEMain2} that do not contain Euler products.
To balance the error terms with Euler products it remains to
optimize
\[
\min \Big ( \prod_{p \leq k} \Big ( 1 - \frac{\tfrac12(|\lambda_f(p)| - 1)^2}{p} \Big )
\ , \ \prod_{p \leq k} \Big ( 1 - \frac{n(p)}{p} \Big ) + \prod_{p \leq k}
\Big ( 1 - \frac{\lambda_f(p^2) + 1}{p} \Big ) \Big ).
\]
For $a,b,c \geq 0$ we have
$$
\min(a , b +c ) \leq \min(a,b) + \min(a,c) \ll a^{\alpha} b^{1 - \alpha}
+ a^{\beta} c^{1 - \beta}.
$$
Therefore it is enough to choose $\alpha$ and $\beta$ so as to minimize separately $a^{\alpha} c^{1 - \alpha}$ and $b^{\beta} c^{1 - \beta}$ for $a,b,c$ corresponding to the Euler products above. 
To shorten notation write $\lambda=|\lambda_f(p)|$.
This leads us to finding an $0 \leq \alpha \leq 1$ which minimizes
$$\max_{0 \leq \lambda \leq 2} \Big ( - \tfrac{\alpha}{2} (\lambda - 1)^2 - (1 - \alpha) (\lambda^2 - 1 - \tfrac 14 (\lambda^2 - 1)^2 + \tfrac 14) \Big ).$$
We also need to find a $0 \leq \beta \leq 1$ which will minimize 
$$
\max_{0 \leq \lambda \leq 2} \Big ( - \tfrac{\beta}{2} (\lambda - 1)^2  - (1 - \beta) \lambda^2 \Big ).
$$
This is minimized by taking $\beta = 2-\sqrt{2}$ and under this choice the maximum is less than $-\tfrac {1}{12}$. For the first condition, let us first restrict to $\alpha \geq 1/3$.
 We note that we can then restrict to $\lambda \leq 1$, because for $\lambda \geq 1$ the $\max$ is always bounded by $- \tfrac {1}{12}$. In the range $0 \leq \lambda \leq 1$, we have $\tfrac 14 (\lambda^2 - 1)^2 \leq \tfrac14 (\lambda - 1)^2$. Thus it's enough to optimize
$$
\max_{0\leq \lambda \leq 1} 
\Big ( - \frac{\alpha}{2} (\lambda - 1)^2 - (1-\alpha) (\lambda^2 -1 - \tfrac 14 (\lambda - 1)^2 + \tfrac 14) \Big ).
$$
For $\tfrac13 \leq \alpha \leq 1$ this maximum is equal to
$$
\frac{(1-\alpha)(13-15\alpha)}{4(3-\alpha)}.
$$
This is smallest when $\alpha = 3-8/\sqrt{15}$ and
the minimum is then
$$
-\kappa := -31/2+4\sqrt{15} = -0.008066615 \ldots.
$$
Thus, the minimum of the error terms in \eqref{eq:evalQUEMain1} and \eqref{eq:evalQUEMain2}
with Euler products is $\ll (\log k)^{2\eta_1+2\eta_2-\kappa+\varepsilon}$.
Since we chose $\eta_2= \tfrac{1}{2} \cdot \eta_1$, we obtain
\[
\langle |F_k|^2,  \chi_{\mathcal{R}} \rangle = \frac{3}{\pi} \cdot \langle 1, \chi_{R} \rangle + O((\log k)^{-\eta_1/2+\varepsilon}) + O((\log k)^{3\eta_1-\kappa+\varepsilon}).
\]
So this balances by taking $\eta_1=2/7 \cdot \kappa$ and gives an error of $O((\log k)^{-\kappa/7})$ as claimed.
\end{proof}

\textbf{Acknowledgments.} The third author is grateful to Zeev Rudnick for inviting him to Tel-Aviv University where this work was started. We would also like to thank Misha Sodin and Zeev Rudnick for many stimulating and helpful discussions.
Additionally, we are also grateful to Roman Holowinsky for sending us lecture notes from Henryk Iwaniec's course on QUE. We would also like to thank Matt Young for comments on an earlier draft of this paper and in particular for pointing out a better proof of Lemma \ref{lem:IBP}.

\bibliographystyle{amsplain}
\bibliography{paper}

\end{document}